\newcommand{\RN}[1]{%
	\textup{\uppercase\expandafter{\romannumeral#1}}%
}
\def\ss{\smallskip}
\def\C{\mathbb{C}}
\def\R{\mathbb{R}}
\def\N{\mathbb{N}}
\def\tt{\mathsf{t}}
\def\ss{\mathsf{s}}
\def\TT{\mathsf{T}}
\newcommand{\re}{\operatorname{Re}}
	\def\Xint#1{\mathchoice
		{\XXint\displaystyle\textstyle{#1}}%
		{\XXint\textstyle\scriptstyle{#1}}%
		{\XXint\scriptstyle\scriptscriptstyle{#1}}%
		{\XXint\scriptscriptstyle\scriptscriptstyle{#1}}%
		\!\int}
	\def\XXint#1#2#3{{\setbox0=\hbox{$#1{#2#3}{\int}$ }
			\vcenter{\hbox{$#2#3$ }}\kern-.59\wd0}}
	\def\dashint{\Xint-}
\theoremstyle{plain}
\newtheorem{thm}{Theorem}[section]
\newtheorem{cor}[thm]{Corollary}
\newtheorem{lem}[thm]{Lemma}
\newtheorem{prop}[thm]{Proposition} 
\theoremstyle{remark}
\newtheorem{rem}{Remark}
\newtheorem{assump}[thm]{Assumption}
\numberwithin{equation}{section}
\begin{document}

\title[Precise large deviations in geometric last passage percolation]{Precise large deviations in geometric last passage percolation}

%%%%%%%%%%%%%%%%%%%%%%%%%%%%% author %%%%%%%%%%%%%%%%%%%%%%%%%%%%
\author{Sung-Soo Byun}
\address{Department of Mathematical Sciences and Research Institute of Mathematics, Seoul National University, Seoul 151-747, Republic of Korea}
\email{sungsoobyun@snu.ac.kr}

\author{Christophe Charlier}
\address{ Research Institute in Mathematics and Physics, UCLouvain, 1348 Louvain-La-Neuve, Belgium}
\email{christophe.charlier@uclouvain.be}

\author{Philippe Moreillon}
\address{Research Institute of Mathematics, Seoul National University, Seoul 151-747, Republic of Korea}
\email{phmoreil@snu.ac.kr}

\author{Nick Simm}
\address{Department of Mathematics, University of Sussex, Brighton, BN1 9RH, United Kingdom}
\email{n.j.simm@sussex.ac.uk}
%%%%%%%%%%%%%%%%%%%%%%%%%%%%% author %%%%%%%%%%%%%%%%%%%%%%%%%%%%

\begin{abstract}
We study the last passage time in geometric last passage percolation (LPP). As the system size increases, we derive precise large deviation probabilities---up to and including the constant terms---for both the lower and upper tails. A key step in proving these results is to establish a \textit{duality formula} that reformulates the LPP problem in terms of the largest eigenvalue in the Jacobi unitary ensemble (JUE). In addition, we establish a second duality formula, which relates the LPP problem to the truncated unitary ensemble (TUE). Using this, we also derive asymptotics for the moments of the absolute value of characteristic polynomials of the TUE, which may be of independent interest.
\end{abstract}

\maketitle

%\tableofcontents

\section{Introduction and main results}\label{sec:Intro}

Consider the positive quadrant of the integer lattice $\mathbb{Z}_{+}^{2}$ and on each site put independent and identically distributed geometric random variables $\omega_{i,j}$ with parameter $q^2 \in (0,1)$, i.e. the probability mass function of each $\omega_{i,j}$ is
\begin{equation}
\label{weights}
	\mathbb{P}(\omega_{i,j} = k) = (1-q^2)q^{2k}, \qquad k=0,1,2,\ldots
\end{equation}
Now consider the collection $\Pi$ of all up-right paths $\pi$ that connect the site $(1,1)$ to $(n,m)$. The last passage time is the random variable
\begin{equation}
	G_{n,m} = \max_{\pi \in \Pi}\sum_{(i,j) \in \pi}\omega_{i,j}. \label{gnm}
\end{equation}
The geometric last passage percolation (LPP) model is defined by the random path that attains the maximum total weight among all up-right paths, as illustrated in Figure~\ref{Fig_LPP}. 

In this work, we study the asymptotic behaviour of the probability $\mathbb{P}(G_{n,m} \leq \ell)$ in regimes where $n,m,\ell$ are large. The analysis of such probabilities has played a central role in the development of the Kardar--Parisi--Zhang (KPZ) universality class and the early advances in integrable probability theory.

\begin{figure}[t]
	\begin{subfigure}{0.32\textwidth}
	\begin{center}	
		\includegraphics[width=\textwidth]{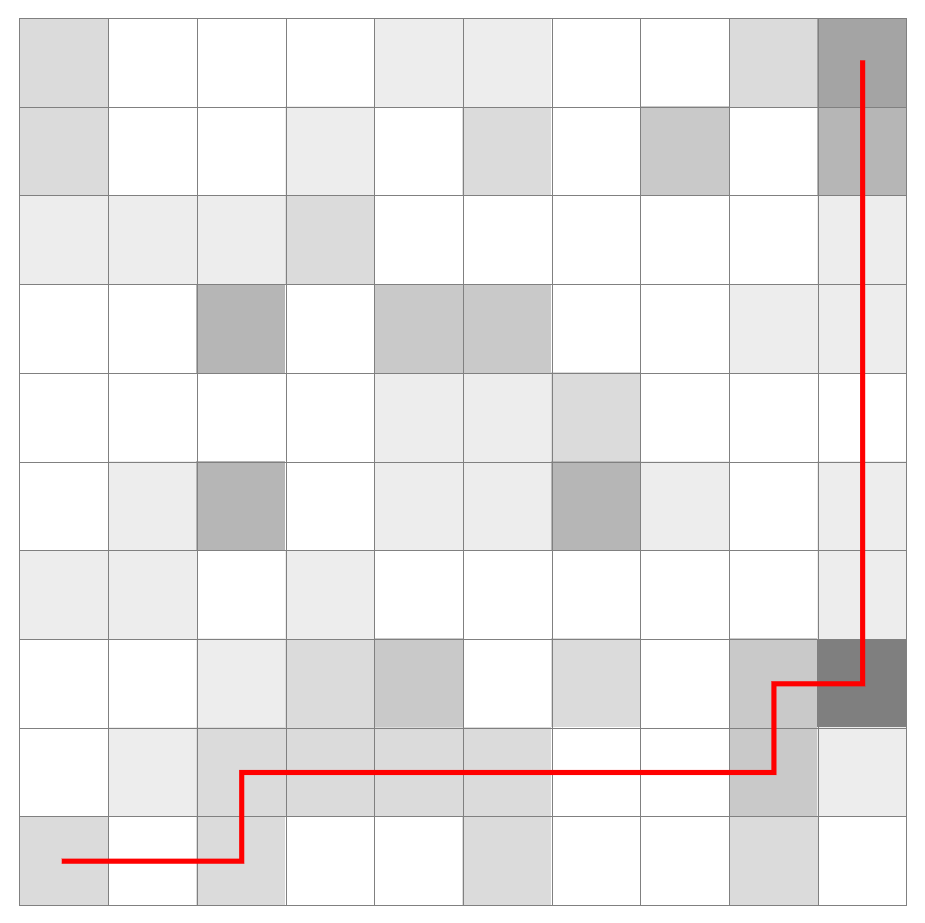}
	\end{center}
	\subcaption{$n=m=10$}
\end{subfigure}	
\begin{subfigure}{0.32\textwidth}
	\begin{center}	
		\includegraphics[width=\textwidth]{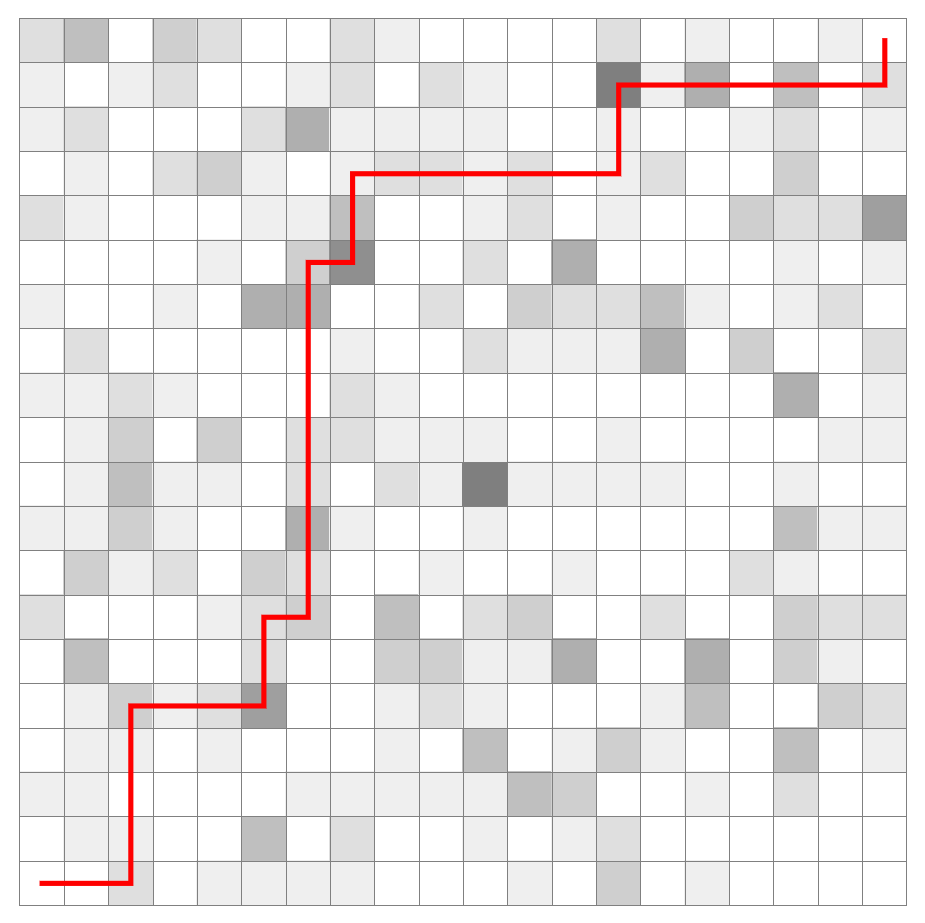}
	\end{center}
	\subcaption{$n=m=20$}
\end{subfigure}	
	\begin{subfigure}{0.32\textwidth}
	\begin{center}	
		\includegraphics[width=\textwidth]{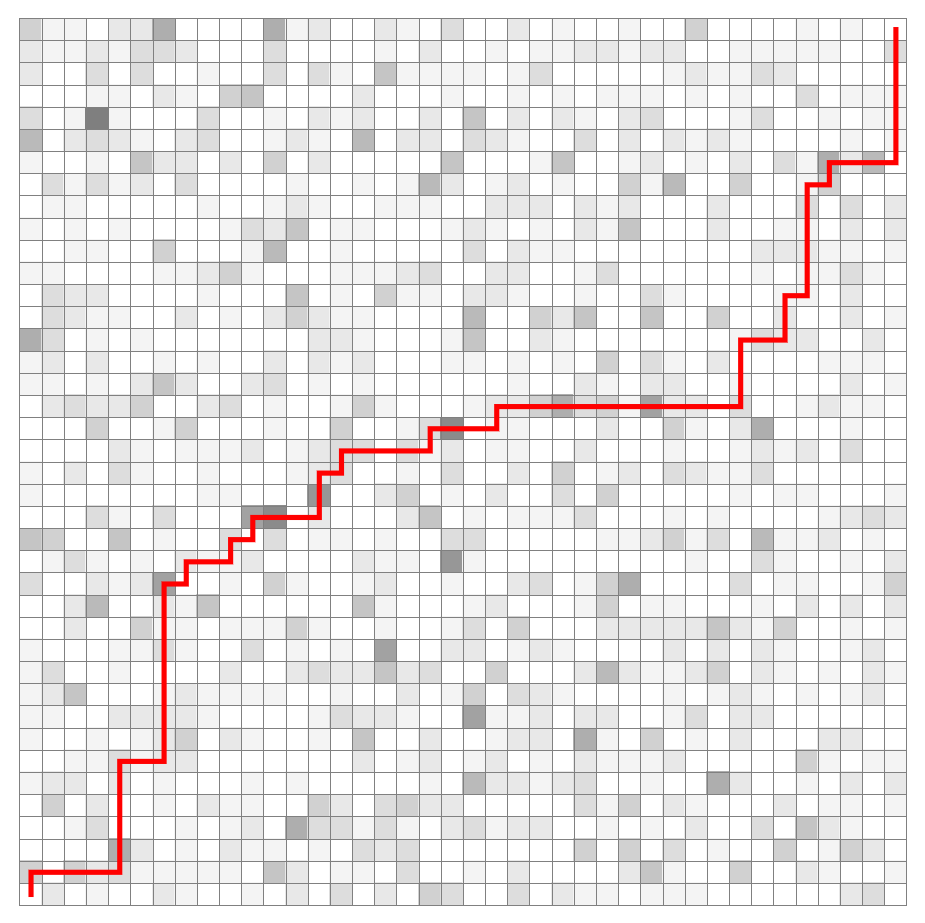}
	\end{center}
	\subcaption{$n=m=40$}
\end{subfigure}	
	\caption{Plots of the last passage paths (red lines) from $(0,0)$ to $(n,m)$ with $q=1/\sqrt{2}$. The panels correspond to (A) $n=m=10$, (B) $n=m=20$, and (C) $n=m=40$. Lighter shading indicates smaller values of the weights $\omega_{i,j}$.  } \label{Fig_LPP}
\end{figure}

A law of large numbers type result for \eqref{gnm} was investigated in the seminal work \cite[Theorem 1.1]{Joh00} by Johansson, where it is shown that for $\gamma \geq 1$, 
\begin{equation} \label{LLN for LPP}
	\lim_{N \to \infty} \frac{1}{N} \mathbb{E} [ G_{\lfloor\gamma N \rfloor, N} ] = \omega(\gamma,q):= \frac{ (1+q\sqrt{\gamma } )^2 }{ 1-q^2}-1. 
\end{equation}
Here, $\lfloor x\rfloor$ is the largest integer less or equal to $x$. (Note that $q^{2}$ here corresponds to $q$ in \cite{Joh00}.)

It is also known from \cite[Theorem 1.2]{Joh00} that the fluctuations of $G_{\lfloor \gamma N \rfloor, N}$ around the typical value \(\omega(\gamma,q)\) are described by the Tracy--Widom distribution: more precisely,
\begin{equation} \label{def of scaled TW}
\mathbb{P}( G_{\lfloor \gamma N \rfloor, N} \leq x ) =   F_{\mathrm{TW}}\Big( \frac{x - N \omega(\gamma, q)}{\sigma(\gamma, q) N^{1/3}} \Big) (1+o(1)),
\end{equation}
as $N \to \infty,$ where
\begin{align*}
\sigma(\gamma,q) = \frac{q^{1/3} \gamma^{-1/6}}{1 - q^2} ( \sqrt{\gamma} + q )^{2/3} ( 1 + q\sqrt{ \gamma} )^{2/3},
\end{align*}
and $F_{\rm TW}(s)$ is the Tracy-Widom distribution, which we recall is given by
\begin{equation*}
F_{\rm TW}(s)=\exp\bigg(-\int_s^{+\infty}(x-s)u(x)^2 dx\bigg),
\end{equation*}
with $u$ the Hastings-McLeod solution of the Painlev\'e II equation, see also Figure \ref{Fig_rare events}. We note that the left-hand side of \eqref{def of scaled TW} admits a Fredholm determinant representation~\cite{BO00}, see also~\cite[Appendix]{CLW16} where this formula is used to derive deviation probabilities, and~\cite{FR05,BS25} for Fredholm determinant representations associated with variants of the LPP models.

Beyond fluctuations around the typical values, a natural next question concerns the probabilities of rare events. These can be divided into two cases: 
\begin{itemize}
    \item the \emph{lower tail probability}, where \( G_{m,n} \) is significantly smaller than its typical value, and 
    \smallskip 
    \item the \emph{upper tail probability}, where \( G_{m,n} \) is significantly larger,
\end{itemize}  
see again Figure~\ref{Fig_rare events}. Due to the typical fluctuation described in \eqref{def of scaled TW} and the well-known tail behaviour of the Tracy--Widom distribution, one can expect that the lower tail probability is significantly smaller---and much harder to analyze---than the upper tail probability.
This expectation is confirmed by \cite[Theorem 1.1]{Joh00}, which shows that there are functions $\ell,i : (0,+\infty) \to (0,+\infty)$ such that for any fixed $\epsilon >0$,
\begin{align}
	& \lim_{ N \to \infty } \frac{1}{N^2} \log \mathbb{P}\Big[ G_{\lfloor\gamma N\rfloor, N} \le N ( \omega(\gamma,q)-\epsilon ) \Big]  = - \ell(\epsilon), \label{lowtail}
	\\
	&  \lim_{ N \to \infty } \frac{1}{N} \log \mathbb{P}\Big[ G_{\lfloor\gamma N \rfloor, N} \ge N ( \omega(\gamma,q)+\epsilon ) \Big]  = - i(\epsilon). \label{uptail}
\end{align}
An explicit formula for the upper tail rate function $i(x)$ is known and given in \cite[Theorem 2.2 and (2.21)]{Joh00} and \cite[Theorem 4.4]{Sep98}, see also \cite{BDMMZ01} and Remark~\ref{Rem_Johansson's upper tail} below. In contrast to the upper tail rate function, it is written in \cite[p.451]{Joh00} that  
\emph{``We will not discuss the explicit form of the lower tail rate function.''}  
In fact, even though Johansson's work \cite{Joh00} is 25 years old, an explicit formula for $\ell(x)$ still appears to be lacking in the literature. In the recent paper \cite[p.29]{DZ22}, the authors write  
    \emph{``The only impediment is that the Johansson result appears in a variational form.''} (In addition, we note that the variational characterisation of $\ell$ in \cite{Joh00} contains a minor error, which is addressed and clarified in detail in the recent work \cite[Theorem~7.14]{DD22}.)

\begin{figure}[t]
\begin{center}
\begin{tikzpicture}[scale=2, every node/.style={align=center}]  
\node at (0, 0) {\includegraphics[width=0.65\textwidth]{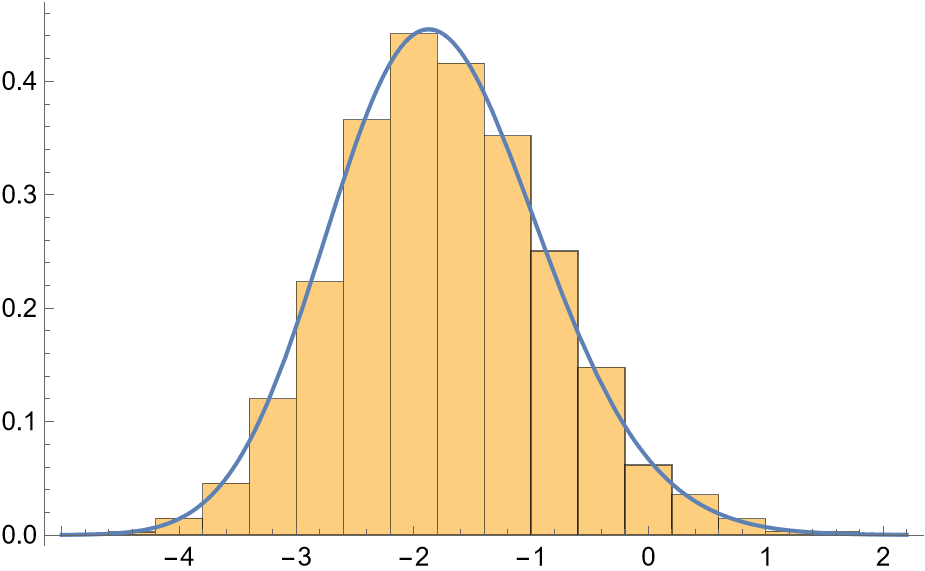}};    
\node[below] at (-2.2, -1.8) {\emph{\textbf{Lower tail}}};
\node[below] at (2.2, -1.8) {\emph{\textbf{Upper tail}}};
\end{tikzpicture}
\end{center}
    \caption{The histogram was made by computing $10^{4}$ samples of $\frac{G_{N, N} - N \omega(\gamma, q)}{\sigma(\gamma, q) N^{1/3}}$ with $N=400$. The blue curve is the density $F_{\mathrm{TW}}'(x)$ of the Tracy-Widom distribution.}  \label{Fig_rare events}
\end{figure} 

\medskip 

Our main goal in this paper is to obtain asymptotic formulas for both the lower and upper tail probabilities appearing in \eqref{lowtail} and \eqref{uptail}, respectively. For the lower-tail probability, our proof requires to treat the following two cases separately.
\begin{itemize}
    \item \textbf{Almost-square lattice} (see~Theorem~\ref{Thm_LPP weak}): This corresponds to the regime where \( n - m \) remains bounded as \( n \to \infty \).
    \smallskip
    \item \textbf{Rectangular lattice} (see~Theorem~\ref{th:lpp-lt}): This corresponds to the regime where the ratio \( n/m \) stays bounded away from \(1\) as \( n \to \infty \).
\end{itemize}
This distinction is only needed for technical reasons that will become apparent below; our results, however, suggest that no critical transition takes place between these two regimes, see also Remark \ref{remark:no critical transition}.

\begin{thm}[\textbf{Lower tail probability in an almost-square lattice}]  \label{Thm_LPP weak}  
Let $q \in (0,1)$, $0< \delta < \omega(1,q)$ and $\mathsf{n}\in \N:=\{0,1,\ldots\}$ be fixed. Then, as $N \to \infty$,     
	\begin{equation}
			\log \mathbb{P} \Big[ G_{  N+ \mathsf{n} , N  } \le \delta N \Big] = \mathsf{L}_1 N^2 + \mathsf{L}_2 N +\mathsf{L}_{3} \log N +\mathsf{L}_4 +O\Big(\frac{\log N}{N}\Big) ,
		\end{equation} 
        where 
        \begin{align*}
     \begin{split}
        \mathsf{L}_1 &= \log \big(\tfrac{1-q}{2}\big) +  ( 1+\delta )^2   \log\big(\tfrac{1+q}{2}\big)  - \frac{\delta^{2}}{2}\log q
          -(1+\delta)^{2}\log(1+\delta) +\frac{\delta^{2}}{2}\log\delta + \frac{(2+\delta)^{2}}{2}\log(2+\delta),
        \end{split}
        \\
        \begin{split}
        \mathsf{L}_2 &= \mathsf{n} \bigg[  \log \big(\tfrac{1-q^2}{4}\big) -   \delta \log \big(\tfrac{2}{ 1+q } \big) -   (1+\delta)\log(1+\delta) +   (2+\delta)\log(2+\delta) \bigg] , 
        \end{split}
        \\
        \begin{split}
        \mathsf{L}_3 &= -\frac{1}{12}, 
        \end{split}
        \\
        \begin{split}
        \mathsf{L}_4 &=  \zeta'(-1)+\frac{1}{12} \log \Big(\tfrac{ 2 (1+\delta)^2 }{ \delta (2+\delta) } \Big)  - \frac{1}{8} \log  \Big( \tfrac{  ( 2q+\delta q-\delta ) (  2+\delta-\delta q ) }{ 4 q } \Big)    
    + \frac{\mathsf{n}^2}{2} \log \Big(\tfrac{ (2+\delta-\delta q)(2+\delta) }{ 4 (1+\delta) } \Big). 
        \end{split}
        \end{align*}
Here, $\zeta$ is the Riemann zeta function. 
\end{thm}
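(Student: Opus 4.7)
The plan is to proceed via the duality formula announced in the abstract, which recasts $\mathbb{P}[G_{N+\mathsf{n},N} \le \delta N]$ as a gap probability for the largest eigenvalue of the Jacobi unitary ensemble (JUE). Concretely, one identifies parameters $(\alpha,\beta)$---with $\alpha$ absorbing the linear dependence on $\mathsf{n}$---and a threshold $s^{\ast}=s^{\ast}(\delta,q)\in(0,1)$ strictly inside the limiting spectral support $[0,1]$, so that the event $\{G_{N+\mathsf{n},N}\le\delta N\}$ corresponds to requiring all $N$ eigenvalues of the JUE of size $N$ to lie in $[0,s^{\ast}]$. Since $s^{\ast}$ lies in the bulk, this is a pushed-edge large-deviation problem: the soft right edge of the JUE is forced to become a hard wall at $s^{\ast}$, which is precisely the setting known to produce the $-\tfrac{1}{12}\log N$ term characteristic of two-hard-edge ensembles.

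Next, I would write this gap probability as a ratio of Hankel determinants---numerator built from the Jacobi weight restricted to $[0,s^{\ast}]$, denominator the ordinary JUE normalization---and take logarithms. The denominator is an explicit Selberg integral whose full asymptotic expansion (including the $\zeta'(-1)$ contribution) is essentially off-the-shelf from Barnes $G$-function asymptotics. The nontrivial content is therefore concentrated in the truncated Hankel determinant, which I would analyse by a Deift--Zhou Riemann--Hilbert steepest descent for the orthogonal polynomials on $[0,s^{\ast}]$ with respect to the Jacobi weight. In that analysis, the leading $\mathsf{L}_{1}N^{2}$ is the energy of the constrained equilibrium measure on $[0,s^{\ast}]$; the $\mathsf{L}_{2}N$ term collects boundary values of the $g$-function together with the linear Jacobi-parameter shift by $\mathsf{n}$; the $\mathsf{L}_{3}\log N=-\tfrac{1}{12}\log N$ reflects the two hard edges (at $0$ from the Jacobi weight and at $s^{\ast}$ from the imposed wall); and $\mathsf{L}_{4}$ assembles Szeg\H{o}-type constants from the global parametrix together with the $\mathsf{n}^{2}/2$ contribution arising from the quadratic expansion in the $\alpha$-shift.

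The main obstacle I expect is the precise identification of $\mathsf{L}_{4}$. The leading term $\mathsf{L}_{1}$ is routine potential theory, $\mathsf{L}_{2}$ follows from the $g$-function at the endpoints, and the $-\tfrac{1}{12}$ is structural; but pinning down $\mathsf{L}_{4}$ requires matching the global parametrix to two hard-edge Bessel parametrices, controlling the small-norm error uniformly to $O(\tfrac{\log N}{N})$, evaluating the resulting Szeg\H{o} integrals in closed form, and then reconciling these with the Barnes $G$-function constants coming from the denominator so that everything combines into the explicit expressions above. The restriction to the almost-square case $n=N+\mathsf{n}$ with $\mathsf{n}$ bounded is likely what makes this matching tractable: it keeps the Jacobi parameter bounded, so that the standard hard-edge (Bessel) local analysis applies without $\mathsf{n}$-dependent deformations, and it collapses several scales that in the rectangular regime of Theorem~\ref{th:lpp-lt} must be treated separately.
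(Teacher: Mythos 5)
Your plan coincides with the paper's proof of Theorem~\ref{Thm_LPP weak}: apply the JUE duality (Theorem~\ref{th:jue-lpp}) to reduce to $\mathbb{P}(x^{(N)}_{\mathrm{max}}(\mathsf{n},\delta N)\le 1-q^2)$, note that $d=1-q^2$ lies below the right edge $b$ of the Wachter law so the constrained equilibrium measure on $[0,d]$ has two hard edges (Proposition~\ref{Prop_constrained Wacther}(b1)), expand the constrained Hankel determinant and the Selberg normalization separately, and subtract. The only difference is that instead of redoing the Deift--Zhou steepest descent with two Bessel parametrices, the paper invokes the already-established result of Charlier--Gharakhloo~\cite{CG21} for two hard edges (reformulated as Theorem~\ref{thm:general a and b Jacobi-type}), leaving only the explicit evaluation of the equilibrium-measure integrals (Lemma~\ref{lemma:integrals with two hard edges}) and the Barnes $G$-function asymptotics of the Selberg denominator.
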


To state our results in a general rectangular lattice, it is convenient to introduce the parameters  
\begin{equation} \label{x pm lpp}
	a = \bigg( \frac{  \sqrt{ \gamma(\gamma+\delta) } -\sqrt{\delta+1}   }{ \gamma+\delta+1 } \bigg)^2, \qquad  b =\bigg( \frac{  \sqrt{\gamma(\gamma+\delta) } +\sqrt{\delta+1}   }{\gamma+\delta+1} \bigg)^2
\end{equation}
for $\gamma\geq1$ and $0< \delta <\omega(\gamma,q)$. 
We note that these values correspond to the edges of the support of the Wachter distribution (see \eqref{def of Wachter distribution} below), once the parameters are properly chosen.
Furthermore, let $\mathfrak{m}\in (0,a)$ be the unique solution to the equation 
\begin{align}\label{a eq 4}
2 = (\gamma-1) \bigg( \frac{\sqrt{1-q^2}}{\sqrt{ \mathfrak{m} }}-1 \bigg) + \delta \bigg( \frac{q}{\sqrt{1-\mathfrak{m}  }}-1 \bigg).
\end{align}
Below, the parameter $\mathfrak{m}$ serves as the left edge of the \textit{constrained} Wachter distribution in Proposition~\ref{Prop_constrained Wacther}. See also Remark~\ref{Rem_explicit m} for another expression of $\mathfrak{m}$.

\begin{thm}[\textbf{Lower tail probability in a rectangular lattice}] \label{th:lpp-lt}
Let \( q \in (0,1) \), \( \gamma > 1 \) and \( 0 < \delta < \omega(\gamma, q) \) be fixed.   Let \( \mathsf{n}=\mathsf{n}_N \) be in a fixed compact subset of \( \mathbb{R} \), chosen so that \( \gamma N + \mathsf{n} \in \mathbb{N}_{>0} \) for all $N\in\mathbb{N}_{>0}$.  
Then, as \( N \to \infty \),
		\begin{equation}
			\log \mathbb{P} \Big[ G_{  \gamma N + \mathsf{n} , N  } \le \delta N \Big] = L_1 N^2 + L_2 N +L_{3} \log N +L_4 +O\Big(\frac{\log N}{N}\Big) , 
		\end{equation} 
where
\begin{align*}
\begin{split}
 L_{1} & =  \log \big(\tfrac{1-q^2-\mathfrak{m} }{4}\big)  +  ( 1+\gamma+\delta ) \bigg[ (\gamma-1)\log\Big(\tfrac{\sqrt{ \mathfrak{m} }+\sqrt{1-q^2 }}{2}\Big) + \delta \log\Big(\tfrac{\sqrt{1- \mathfrak{m} }+ q }{2}\Big) \bigg]  
 \\
&  \quad  - (\gamma-1)\delta\log\Big(\tfrac{\sqrt{ \mathfrak{m} q^2}+\sqrt{(1-q^2)(1-\mathfrak{m} )}}{2}  \Big)  -\frac{\gamma^{2}}{2}\log\gamma - \frac{(1+\delta)^{2}}{2}\log(1+\delta) - \frac{(\gamma+\delta)^{2}}{2}\log(\gamma+\delta)
\\
&\quad + \frac{(\gamma-1)^{2}}{4}\log\Big( \tfrac{(\gamma-1)^{2}}{ \mathfrak{m} (1-q^2)}\Big) + \frac{\delta^{2}}{4}\log\Big( \tfrac{\delta^{2}}{(1-\mathfrak{m} )q^2}\Big) + \frac{(1+\gamma+\delta)^{2}}{2}\log(1+\gamma+\delta), \end{split}
\\
\begin{split}
L_2 &=   \mathsf{n} \bigg[ (1+\gamma+\delta) \log\Big(\tfrac{\sqrt{ \mathfrak{m} }+\sqrt{1-q^2} }{2}\Big)  + (\gamma-1) \log \Big(   \tfrac{ \sqrt{\mathfrak{m}}+\sqrt{1-q^2} }{ 2\sqrt{ \mathfrak{m}(1-q^2) } }  \Big) - \delta \log \Big(\tfrac{\sqrt{  \mathfrak{m}  q^2}+\sqrt{(1-q^2)(1- \mathfrak{m}  ) } }{\sqrt{  1-\mathfrak{ m }   }+q }\Big)
\\
& \qquad   -  \gamma\log\gamma - (\gamma+\delta)\log(\gamma+\delta) + (2+\delta)\log(1+\gamma+\delta) + (\gamma-1) \log\big((\gamma-1)(1+\gamma+\delta)\big) \bigg],  
\end{split}
\\
L_3 &=-\frac{1}{12}, 
\\
\begin{split}
L_{4} & = \zeta^{\prime}(-1)-\frac{1}{6} \log \big( \tfrac{1-q^2- \mathfrak{m} }{4} \big)- \frac{1}{8}\log \Big( \tfrac{\gamma-1}{\sqrt{ \mathfrak{m} (1-q^2)}} - \tfrac{\delta}{\sqrt{(1- \mathfrak{m} )q^2} } \Big)  - \frac{1}{24}\log \Big( (\gamma-1) \tfrac{\sqrt{1-q^2}}{ \mathfrak{m} ^{3/2}} - \delta \tfrac{ q }{(1- \mathfrak{m} )^{3/2}} \Big)
\\
&\quad   - \frac{1}{12}\log  \Big(\tfrac{(\gamma-1)\delta(1+\gamma+\delta)}{\gamma(1+\delta)(\gamma+\delta)}\Big)
 + \frac{  \mathsf{n}^{2} }{2} \log \Big( \tfrac{(\sqrt{\mathfrak{m}}+\sqrt{1-q^{2}})^{2}}{4\sqrt{1-q^{2}}} \tfrac{\gamma - 1}{\sqrt{\mathfrak{m}}} \tfrac{1+\gamma+\delta}{\gamma(\gamma+\delta)} \Big) .
\end{split}
\end{align*}
Here, $\zeta$ is the Riemann zeta function. 
\end{thm}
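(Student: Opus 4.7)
The plan is to apply the duality formula announced in the abstract---which identifies the LPP lower-tail event with a gap probability for the Jacobi unitary ensemble (JUE)---and then perform a Deift--Zhou Riemann--Hilbert steepest-descent analysis of the resulting Hankel determinant. Concretely, we write
\[
\mathbb{P}[G_{\gamma N+\mathsf{n},N}\le \delta N]=\frac{Z_N^{\mathrm{JUE}}([0,t])}{Z_N^{\mathrm{JUE}}([0,1])},
\]
where the JUE weight $x^{\alpha}(1-x)^{\beta}$ has exponents $\alpha,\beta$ that are linear in $N$ with coefficients depending on $(\gamma,\delta,q,\mathsf{n})$, and the cutoff $t\in(0,1)$ is determined by $\delta$. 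The denominator admits a Selberg evaluation whose full large-$N$ expansion is known in terms of Barnes' $G$-function---this is ultimately the source of the $\zeta^{\prime}(-1)$ in $L_4$---so the task is to expand the numerator.

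The associated equilibrium problem is the minimization of the logarithmic-energy functional with the Jacobi external field, constrained to $[0,t]$. Its minimizer is the \textit{constrained Wachter distribution} of Proposition~\ref{Prop_constrained Wacther}: it is supported on $[\mathfrak{m},t]$ with a soft edge at $\mathfrak{m}$ and a hard edge at $t$, and equation~\eqref{a eq 4} is exactly the condition that fixes $\mathfrak{m}$ as the point where the density vanishes while the Euler--Lagrange inequality becomes strict on $[0,\mathfrak{m}]$. The coefficient $L_1$ is then the value of the logarithmic-energy functional at this minimizer---a standard contour-integral computation against the resolvent of the equilibrium density---and $L_2$ arises from the linear-in-$\mathsf{n}$ perturbation of the Jacobi exponents.

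The subleading terms come from the standard ingredients of the steepest-descent analysis. The soft-edge Airy parametrix at $\mathfrak{m}$ produces the universal factor $N^{-1/12}$, hence $L_3=-1/12$; the two hard-edge Bessel parametrices (at $0$ and $t$), the Airy parametrix at $\mathfrak{m}$, and the global outer parametrix combine with the Barnes $G$-function contribution from the denominator to produce $L_4$. The $\mathsf{n}^2$ piece of $L_4$ is generated by the Bessel parametrices once the $O(1)$ shifts of the hard-edge exponents are tracked through the local analysis.

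The main obstacle is the constant-order bookkeeping: one typically sets up a differential identity---for instance in the endpoint $t$, or in the parameter $\delta$---and integrates it against the RH asymptotics, fixing the constants of integration by matching to a solvable reference point (such as $\delta\uparrow\omega(\gamma,q)$, where the constraint just becomes active and the answer must match the Tracy--Widom left tail). A secondary difficulty is establishing uniformity in $\mathsf{n}_N$ over compact subsets of $\mathbb{R}$, which requires checking the stability of all parametrix estimates under bounded perturbations of $(\alpha,\beta)$.
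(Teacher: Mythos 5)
Your overall strategy coincides with the paper's: apply the LPP$\leftrightarrow$JUE duality (Theorem~\ref{th:jue-lpp}), write the conditioned gap probability as a ratio of Hankel determinants, expand the Selberg denominator via Barnes~$G$, and expand the constrained numerator via Riemann--Hilbert steepest descent through the equilibrium measure of Proposition~\ref{Prop_constrained Wacther}. The paper, however, does not carry out the RH analysis from scratch: it transports the precise Hankel-determinant expansion of Charlier--Gharakhloo (\cite[Theorem~1.2]{CG21}, recast here as Theorem~\ref{thm:general a and b}) to the Jacobi setting by elementary affine rescalings, and then evaluates the resulting integral formulas for $C_1,\dots,C_4$ in closed form (Lemmas~\ref{lemma:some identities 1} and~\ref{lemma:some identities 2}). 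What you describe as ``the task is to expand the numerator'' is exactly that cited theorem---including its constant-order normalization, which is the genuinely hard part.

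Several corrections to your sketch. The cutoff in $\mathbb{P}(x_{\max}^{(m)}\le t)$ is $t=1-q^2$, determined by $q$ alone; $\delta$ enters only through the hard-edge exponent $\beta=\ell=\delta N$, not through $t$. In the rectangular regime ($\gamma>1$, so $\alpha=(\gamma-1)N+\mathsf n\to\infty$), the constrained equilibrium measure is supported on $[\mathfrak m,1-q^2]$ with a soft edge at $\mathfrak m>0$ and a \emph{single} hard edge at $1-q^2$; the point $0$ is strictly outside the support and needs no Bessel parametrix. The two-Bessel picture you describe is the almost-square regime of Theorem~\ref{Thm_LPP weak}, where $\alpha$ stays bounded and the support touches $0$---the paper treats that separately via Theorem~\ref{thm:general a and b Jacobi-type}. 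Also, the $N^{-1/12}$ is not the Airy parametrix alone: the Hankel determinant with one soft and one hard edge carries $N^{-1/6}$ (soft edge $-1/24$, hard edge $-1/8$), and $L_3=-1/12$ emerges only after subtracting the $-1/12$ from the Barnes-$G$ expansion of the Selberg normalization. Finally, fixing the constant by matching at $\delta\uparrow\omega(\gamma,q)$ is not a ``solvable reference point'': there $\mathfrak m$ merges with the left Wachter edge $a$ and the local parametrix degenerates, which is precisely why the paper prefers to inherit the fully determined constant from \cite{CG21} rather than integrate a differential identity to a boundary.
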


\begin{rem}\label{remark:no critical transition}
A direct analysis using \eqref{a eq 4} shows that
\begin{align}\label{asymptotics of mfrak}
\mathfrak{m} = \frac{1-q^{2}}{(2+\delta-\delta q)^{2}}(\gamma-1)^{2} + O\big( (\gamma-1)^{3} \big), \qquad \mbox{as } \gamma \to 1.
\end{align}
Using \eqref{asymptotics of mfrak}, it is now easy to verify that $\lim_{\gamma\to 1} L_{j} = \mathsf{L}_j$ for $j=1,2,3,4$. This suggests that the statement of Theorem \ref{th:lpp-lt} in fact holds uniformly for $\gamma$ in compact subsets of $[1,+\infty)$.
\end{rem}
 
\begin{rem} \label{Rem_explicit m} 
Using the result in \cite[Section 4]{RKC12}, we can express $\mathfrak{m}$ more explicitly as 
\begin{equation}
\mathfrak{m}= \frac14 \bigg( \mathsf{B}+\sqrt{x_0} -\sqrt{ \mathsf{B}^2-2\mathsf{C}^2+2 -x_0-\tfrac{2}{ \sqrt{x_0}  }  \mathsf{B}(\mathsf{C}^2+1) } \bigg)^2,
\end{equation}
where  
\begin{align*}
& \mathsf{B}= \frac{ (\gamma-1) \sqrt{1-q^2} }{ 1+\gamma+\delta }, \qquad \mathsf{C}= \frac{\delta q }{ 1+\gamma+\delta }, \\
& x_0= -\frac{2\mathsf{C}^2-\mathsf{B}^2-2 }{3} +( \mathsf{R}+\sqrt{\mathsf{Q}^3+\mathsf{R}^2} )^{ 1/3 } +( \mathsf{R}-\sqrt{\mathsf{Q}^3+\mathsf{R}^2} )^{ 1/3 }, \\
& \mathsf{Q}= -\frac19 (1-\mathsf{B}^2-\mathsf{C}^2)^2,
	\qquad 
	\mathsf{R} = \frac{1}{27} \Big( \mathsf{B}^6-3\mathsf{B}^4(1-\mathsf{C}^2)+3\mathsf{B}^2(1+16\mathsf{C}^2+\mathsf{C}^4)-(1-\mathsf{C}^2)^3\Big) .
\end{align*} 
\end{rem}

\bigskip 

Next, we discuss the asymptotic expansion for the upper tail large deviation probability. In order to describe it, for $q \in (0,1)$, $\gamma \geq 1$ and $ \delta > \omega(\gamma,q)$, we define 
\begin{align}
\begin{split} \label{utexpan}
\Phi_{\gamma,\delta}(x) & := -\log\bigg(\frac{2x-a-b+2\sqrt{(x-b)(x-a)}}{b-a}\bigg)+(\gamma-1)\log\bigg(\frac{\sqrt{ab}+x-\sqrt{(x-b)(x-a)}}{(\sqrt{a}+\sqrt{b})\sqrt{x}}\bigg)
\\
&\quad +\delta\log\bigg(\frac{\sqrt{(1-a)(1-b)}-x+1+\sqrt{(x-b)(x-a)}}{(\sqrt{1-a}+\sqrt{1-b})\sqrt{1-x}}\bigg),   
\end{split}
\end{align} 
where $a$ and $b$ are given by \eqref{x pm lpp}.

\begin{thm}[\textbf{Upper tail probability}]
\label{th:lpp-ut}
Let \( q \in (0,1) \), \( \gamma \ge 1 \) and \( \delta > \omega(\gamma, q) \) be fixed.   Let \( \mathsf{n}=\mathsf{n}_N \) be in a fixed compact subset of \( \mathbb{R} \), chosen so that \( \gamma N + \mathsf{n} \in \mathbb{N}_{>0} \) for all $N\in\mathbb{N}_{>0}$. 
Then, as \( N \to \infty \), 
\begin{equation}
\label{lpp-ut}
	\log \mathbb{P} \Big[ G_{  \gamma N+ \mathsf{n} , N  } \geq \delta N \Big] = U_1 N + U_2 \log N +U_{3} + o(1),
	\end{equation}
where 
\begin{equation}
U_{1} = -2\Phi_{\gamma,\delta}(1-q^2), \qquad U_{2}=-1
\end{equation}
and
\begin{equation}
U_{3} =  \log \bigg( \frac{b-a}{8\pi}\,\frac{1}{(1-q^2-a)(1-q^2-b)}\frac{1}{2\Phi'_{\gamma,\delta}(1-q^2)} \bigg)  -  2  \mathsf{n} \frac{d}{d \gamma} \Phi_{\gamma,\delta}(1-q^2)  .
\end{equation}
Here, $\Phi_{\gamma,\delta}$ is given by \eqref{utexpan} and $\frac{d}{d \gamma}\Phi_{\gamma,\delta}$ denotes the total derivative of $\Phi_{\gamma,\delta}$ with respect to $\gamma$ (taking into account that $a$ and $b$ depend on $\gamma$).
\end{thm}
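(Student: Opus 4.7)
\medskip
\noindent\emph{Proof strategy.} The plan is to apply the \textup{JUE} duality formula announced in the abstract in order to rewrite
\begin{equation*}
\mathbb{P}\big[G_{\gamma N+\mathsf{n},N}\geq \delta N\big] = \mathbb{P}\big[\lambda_{\max}^{\mathrm{JUE}} > x_{0}\big]
\end{equation*}
for a Jacobi unitary ensemble whose weight parameters depend on $(N,\mathsf{n},\gamma,\delta,q)$, and for a threshold $x_{0}$ which a direct computation identifies with $1-q^{2}$. The limiting spectral density of this \textup{JUE} is the Wachter distribution supported on $[a,b]$ with $a,b$ as in \eqref{x pm lpp}, and the hypothesis $\delta>\omega(\gamma,q)$ translates into $x_{0}>b$, so the event in question is a large deviation to the right of the bulk.

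Writing $\mathbb{P}[\lambda_{\max}^{\mathrm{JUE}}>x_{0}]=1-\det(I-K_{N}\vert_{[x_{0},1]})$ in terms of the Christoffel--Darboux kernel $K_{N}$ of the \textup{JUE}, one observes that $\mathcal{E}_{N}:=\int_{x_{0}}^{1}K_{N}(x,x)\,dx$ is exponentially small because $x_{0}$ lies outside the limiting support. The standard Fredholm expansion then gives
\begin{equation*}
1-\det\bigl(I-K_{N}\vert_{[x_{0},1]}\bigr) = \mathcal{E}_{N}\bigl(1+O(\mathcal{E}_{N})\bigr),
\end{equation*}
so the problem reduces to a precise large-$N$ asymptotic of $\mathcal{E}_{N}$ with explicit constant term.

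The core technical step is therefore the off-support asymptotics of the Jacobi orthogonal polynomials underlying $K_{N}$. We would perform a nonlinear steepest descent analysis of the corresponding Riemann--Hilbert problem to obtain, uniformly on compact subsets of $(b,1)$,
\begin{equation*}
K_{N}(x,x) = \frac{\mathcal{A}_{\gamma,\delta}(x)}{N}\,e^{-2N\Phi_{\gamma,\delta}(x)}\bigl(1+O(N^{-1})\bigr),
\end{equation*}
where $\Phi_{\gamma,\delta}$ is exactly the function defined in \eqref{utexpan}, namely the excess of the external field over the logarithmic potential of the Wachter equilibrium measure (which vanishes at $x=b$ as a consequence of the Euler--Lagrange equations), and $\mathcal{A}_{\gamma,\delta}$ is an explicit algebraic prefactor. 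Laplace's method applied at the dominant endpoint $x_{0}$ (where $\Phi_{\gamma,\delta}'(x_{0})>0$) then yields the exponential rate $U_{1}=-2\Phi_{\gamma,\delta}(1-q^{2})$, the extra $1/N$ from the endpoint contribution combining with the $1/N$ in $K_{N}$ to produce $U_{2}=-1$, and the constant $\mathcal{A}_{\gamma,\delta}(x_{0})/(2\Phi_{\gamma,\delta}'(x_{0}))$ that has to match the expression for $U_{3}$. The parameter $\mathsf{n}$ enters as a bounded shift of one of the Jacobi exponents, so at leading order it multiplies the prefactor by a factor whose logarithm evaluates at $x_{0}$ to $-2\mathsf{n}\,\tfrac{d}{d\gamma}\Phi_{\gamma,\delta}(1-q^{2})$, accounting for the $\mathsf{n}$-dependent term in $U_{3}$. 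The main obstacle will be to push the Riemann--Hilbert analysis to the precision required to identify $\mathcal{A}_{\gamma,\delta}(x_{0})$ with the explicit expression $\tfrac{b-a}{8\pi(1-q^{2}-a)(1-q^{2}-b)}$ and to maintain uniformity of the error estimates in a neighbourhood of $x_{0}$ and in $\mathsf{n}$; the remaining steps amount to a careful, if lengthy, asymptotic computation.
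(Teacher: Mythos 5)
Your overall strategy coincides with the paper's: transfer to the JUE via the duality of Theorem~\ref{th:jue-lpp}, reduce $\mathbb{P}(x_{\max}^{(n)}>d)$ to $\int_{d}^{1}\rho_{n}(x)\,dx$ up to a squared correction (this is exactly Lemma~\ref{le:juedens}, obtained from the Fredholm expansion plus a Hadamard bound), invoke an exponential decay of $\rho_{n}$ off the limiting support, apply Laplace's method at the endpoint $x_{0}=1-q^{2}$, and absorb the bounded shift $\mathsf{n}$ by a Taylor expansion in the first Jacobi parameter $\alpha=\gamma-1$. The one structural difference is that the paper simply invokes Forrester's result \cite{Fo12} for the off-support density asymptotics \eqref{lol21}, while you propose to re-derive it by nonlinear steepest descent for the Jacobi Riemann--Hilbert problem; this is a heavier but self-contained route, and it would need to be carried out to relative order $o(1)$ to recover $U_{3}$.

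However, there is an arithmetic inconsistency you should fix. Your ansatz
\begin{equation*}
K_{N}(x,x)=\frac{\mathcal{A}_{\gamma,\delta}(x)}{N}\,e^{-2N\Phi_{\gamma,\delta}(x)}\bigl(1+O(N^{-1})\bigr)
\end{equation*}
carries an extraneous factor $1/N$. Laplace's method at the lower endpoint contributes one more factor of order $1/N$, so your accounting would give $U_{2}=-2$, not the claimed $U_{2}=-1$. The correct off-support form, as in \cite{Fo12} (and recalled in the paper as \eqref{lol21}), has an $N$-independent prefactor,
\begin{equation*}
\rho_{n}(x)\sim C_{\alpha,\beta}(x)\,e^{-2n\widetilde{\Phi}_{\alpha,\beta}(x)},
\qquad
C_{\alpha,\beta}(x)=\frac{b-a}{8\pi\,(x-a)(x-b)},
\end{equation*}
so the single $\log N$ term in $U_{2}$ comes from Laplace's method alone. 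Note that your proposed evaluation $\mathcal{A}_{\gamma,\delta}(x_{0})=\tfrac{b-a}{8\pi(1-q^{2}-a)(1-q^{2}-b)}$ is exactly $C_{\alpha,\beta}(x_{0})$ with no extra $1/N$, which is internally consistent only once that factor is removed from the ansatz.
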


\begin{rem}[Comparison with Johansson's formula] \label{Rem_Johansson's upper tail}
In \cite[Eq. (2.21)]{Joh00}, it was shown that the upper tail rate function in \eqref{uptail} can be written as $i(\epsilon) = 2J(\mathsf{b}+\epsilon)$ where
\begin{equation}
J(t) = \frac{ \mathsf{b}-\mathsf{a} }{ 8q \sqrt{\gamma}}\int_{1}^{x}(x-y)\Big(\frac{\gamma-q^{2}}{y+B}+\frac{1-q^{2}\gamma}{y+D}\Big)\frac{dy}{\sqrt{y^{2}-1}}.
\end{equation}
Here,  
$\mathsf{a}=\frac{(1-q\sqrt{\gamma})^{2}}{1-q^2}$, $\mathsf{b}=\frac{(1+q\sqrt{\gamma})^{2}}{1-q^2}=\omega+1$, $x=\frac{2(t-\mathsf{a})}{\mathsf{b}-\mathsf{a}}-1$, $B = \frac{\gamma+q^2}{2q\sqrt{\gamma}}$ and $D=\frac{1+q^2\gamma}{2q\sqrt{\gamma}}$, where $\omega$ is given in \eqref{LLN for LPP}. It turns out that the integral above can be computed explicitly: for $x\geq1$ and $|B|\geq 1$, we have
\begin{equation}
\int_{1}^{x}\frac{x-y}{\sqrt{y^{2}-1}(y+B)} \, dy =  \frac{B+x}{\sqrt{B^{2}-1}}\log \bigg( \frac{1+Bx+\sqrt{(B^{2}-1)(x^{2}-1)}}{B+x} \bigg) + \frac{1}{2}\log \bigg( \frac{x-\sqrt{x^{2}-1}}{x+\sqrt{x^{2}-1}} \bigg).
\end{equation}
With $I(x,B)$ denoting the above integral, we have
\begin{equation*}
J(t) =  \frac{ \mathsf{b}-\mathsf{a} }{8q\sqrt{\gamma}}\Big( (\gamma-q^2)I(x,B)+(1-q^2\gamma)I(x,D)\Big).
\end{equation*}
It is now direct to check that the rate function \eqref{utexpan} is in agreement with $J(t)$ above for the appropriate choice of parameters, namely
\begin{equation}
 J(t) = \Phi_{\gamma,t-1}(1-q^2). 
\end{equation}
This formula confirms the coefficient $U_1$ in Theorem~\ref{th:lpp-ut}.  
    \end{rem}
 
\medskip 
 
The last passage percolation model is closely connected to numerous other integrable probability models. Here, we provide a brief and non-exhaustive overview of the literature on large deviation probabilities—both lower and upper tails—for such models, with an emphasis on recent developments.

One of the foundational contributions is due to Logan and Shepp \cite{LS77}, who computed the equilibrium measure and rate function for Young tableaux under constraints. This corresponds to lower tail large deviations for the length of the longest increasing subsequence in a uniform random permutation. Subsequent works \cite{DZ99, Sep98a} established large deviation principles for the longest increasing subsequence and for the height function in the polynuclear growth model. Johansson \cite{Joh00} then used the Meixner ensemble to establish a large deviation principle for the integrated current in the Totally Asymmetric Simple Exclusion Process (TASEP).  

More recently, Corwin and Ghosal \cite{CG20} derived upper and lower bounds on the lower tail of the KPZ equation, capturing the qualitative behaviour of the rate function, though without sharp estimates. The exact lower tail large deviation was subsequently derived by Tsai \cite{Tsai22}. Further refinements appeared in \cite{CC22, CCR22}, where the Riemann--Hilbert method was employed to obtain asymptotic expansions of the lower tail. A variational approach was developed in \cite{DLM23, DLM24} to establish lower tail large deviations for discrete growth models, including the $q$-polynuclear growth model and the stochastic six-vertex model. In \cite{BGS21}, a speed-$N^2$ upper tail large deviation principle was proved for first passage percolation---corresponding to the lower tail regime in LPP---where the model is not exactly solvable and the rate function is characterised via a variational formula.

At the process level, the scaling limit of passage times was constructed in \cite{DOV22}. In addition, the large deviation behaviour of midpoint transversal fluctuations in general LPP models was analysed in \cite{ABGS25, AC21}.

Turning to the upper tail, exact large deviation principles for the one-point distribution of the KPZ equation and the ASEP with narrow wedge initial condition were established in \cite{DT21, DZ22}. A process-level upper tail large deviation principle for TASEP with general initial data was derived in \cite{QT21}, while the directed landscape---viewed as the full scaling limit of LPP---was analysed at the metric level in \cite{DDV24}, yielding upper tail large deviations. In the setting of exponential LPP, \cite{BBBK24} obtained optimal bounds for the one-point distribution in both upper and lower tail regimes, capturing the correct leading-order behaviour within the typical fluctuation scale. We also refer the reader to \cite{LMS16,CC2025,HMO25} and references therein for further literature.

\subsection*{Outlook of the paper} 
The rest of this paper is organised as follows.
\begin{itemize}
    \item In Section~\ref{Section_duality and results}, we present additional results and reformulate those from Section \ref{sec:Intro} using duality relations (Theorems~\ref{th:trunc-lpp} and~\ref{th:jue-lpp}). These dualities connect the probability $\mathbb{P}(G_{n,m} \leq \ell)$ with non-Hermitian and Hermitian random matrix theories. Specifically, $\mathbb{P}(G_{n,m} \leq \ell)$ can be expressed in terms of the characteristic polynomial of the truncated unitary ensemble (TUE), and in terms of large deviation probabilities for the Jacobi unitary ensemble (JUE). These duality formulas allow us to reformulate Theorems \ref{Thm_LPP weak}, \ref{th:lpp-lt} and \ref{th:lpp-ut} as results for the TUE (Theorems~\ref{Thm_TUE moment weak} and~\ref{Thm_TUE moment strong}) and the JUE (Theorems~\ref{Thm_JUE LDP weak} and~\ref{Thm_JUE LDP}). 
    \smallskip
    \item In Section~\ref{Section_dualities}, using Schur function expansion, we prove the duality relations (Theorems~\ref{th:trunc-lpp} and~\ref{th:jue-lpp}) that connect $\mathbb{P}(G_{n,m} \leq \ell)$ with both the JUE and the TUE.
    \smallskip
    \item The proofs of Theorems \ref{Thm_LPP weak}, \ref{th:lpp-lt}, \ref{th:lpp-ut}, \ref{Thm_TUE moment weak}, \ref{Thm_TUE moment strong}, \ref{Thm_JUE LDP weak}, \ref{Thm_JUE LDP} proceed in the other direction than the presentation: in  Section~\ref{Section_large deviation JUE}, we will use results from \cite{CG21,Fo12} to establish precise large deviation for the largest eigenvalue distribution of the JUE (Theorems~\ref{Thm_JUE LDP weak} and~\ref{Thm_JUE LDP}). The work \cite{CG21} is used for the lower tail, and \cite{Fo12} for the upper tail. We then combine these results with the duality formulas to obtain new results on the TUE (Theorems~\ref{Thm_TUE moment weak} and~\ref{Thm_TUE moment strong}) and on $\mathbb{P}(G_{n,m} \leq \ell)$ (Theorems \ref{Thm_LPP weak}, \ref{th:lpp-lt}, \ref{th:lpp-ut}). 
\end{itemize}

\medskip 

A summary of the interrelated models and corresponding results is provided in Table~\ref{Table_model summary}.

 \begin{table}[h!]
 	\begin{center}
 		{\def\arraystretch{2.5}
 			\begin{tabular}{|cc||c|c|c||}
 				\hline
 				\multicolumn{2}{|c||}{Model} &   \cellcolor{gray!10} \textbf{LPP} &  \cellcolor{gray!10} \textbf{TUE}    &  \cellcolor{gray!10}  \textbf{JUE} 
 				\\
 				\hline 
 				\multicolumn{2}{|c||}{Observables} &  \hspace{2.5em} $	\mathbb{P}(G_{n,m} \leq \ell)$ \hspace{2.5em} & \hspace{1.5em} $ \mathbb{E}  | \det ( \TT_{N,M} -z ) |^{2k}  $ \hspace{1.5em}   &        $ \mathbb{P}( x_{ \rm max }^{ (n) }(\lambda_1,\lambda_2)   < d )  $  
                \\
 					\hline \rule{0pt}{0.1cm} & & \multicolumn{3}{c||}{} \\[-0.9cm]
 					\multicolumn{2}{|c||}{\hspace{-0.1cm}\makecell{Dualities}}  & \multicolumn{3}{c||}{ \makecell[l]{
 Thm.~\ref{th:trunc-lpp} (LPP $\leftrightarrow$ TUE): $\mathbb{P}(G_{n,m} \leq \ell) = c_{\ell,n,m}(1-q^{2})^{nm}\mathbb{E}\left(|\det(  \mathsf{T}_{ \ell,\ell+n-m  } -q)|^{2m}\right)$ \\[0.2cm] Thm.~\ref{th:jue-lpp} (LPP $\leftrightarrow$ JUE): $\mathbb{P}(G_{n,m} \leq \ell) = \mathbb{P}(x^{(m)}_{\mathrm{max}}(n-m,\ell) \leq 1-q^2)$
}
 }      \\[0.3cm]
 				\hline\hline  \rule{0pt}{0.1cm} & & & & \\[-0.9cm]
 				%%%%%%%%%%%%%%%%%%%%%%%%% Weak non-unitarity regime %%%%%%%%%%%%%%%%%%%%%%%%%
% 				\multirow{3}{*}{\rotatebox[origin=c]{90}{ \emph{\CCC{Almost-s}quare lattice}  } } 	& Post-criticality &   \makecell{$n=m+O(1)$, \smallskip $ \delta > \omega(1,q)$ \\ Upper tail }   &    \makecell{$M-N=O(1)$, \smallskip \\ Droplet: full unit circle }      & \makecell{$\alpha=O(1)$, $\beta=O(n)$, \smallskip \\ Pulled regime}   \\
% 				\cline{2-5}
% 				& Pre-criticality &    \makecell{$n=m+O(1)$, \smallskip $  \delta < \omega(1,q)$ \\ Lower tail }  &  \makecell{$M-N=O(1)$, \smallskip \\ Droplet: arc of the unit circl\CCC{e}  
% 				}        &  \makecell{$\alpha=O(1)$, $\beta=O(n)$, \smallskip \\ Pushed regime}    \\
% 				\cline{2-5}  
 					& \hspace{-0.3cm}\makecell[l]{Lower tail } &  
\makecell[l]{
$\bullet$ Thm.~\ref{Thm_LPP weak}: 
\\
\hspace{0.5em} almost-square lattice 
\\
\hspace{0.5em} $n-m \asymp 1$ \\[0.1cm]
$\bullet$ Thm.~\ref{th:lpp-lt}:
\\
\hspace{0.5em} rectangular lattice 
\\
\hspace{0.5em} $n-m \asymp n$ 
}  &   
\makecell[l]{
\hspace{-0.5em}$\bullet$ Thm.~\ref{Thm_TUE moment weak} (b): 
\\
\hspace{0em} weak non-unitarity 
\\
\hspace{0em} $M-N \asymp 1$ \\[0.1cm]
\hspace{-0.8em} $\bullet$ Thm.~\ref{Thm_TUE moment strong} (b): 
\\
\hspace{0em} strong non-unitarity 
\\
\hspace{0em} $M-N \asymp n$ 
}   & 
\makecell[l]{
\hspace{-0.25em} $\bullet$ Thm.~\ref{Thm_JUE LDP weak} (b):  
\\
\hspace{0.5em} one hard, one soft edge
\\
\hspace{0.5em} $\lambda_{1}\asymp 1, \, \lambda_{2} \asymp n$ \\[0.1cm]
\hspace{-0.25em} $\bullet$ Thm.~\ref{Thm_JUE LDP} (b): \\
\hspace{0.5em} two soft edges 
\\
\hspace{0.5em} $\lambda_{1}\asymp n, \, \lambda_{2} \asymp n$ 
}     \\[0.55cm]
 				\cline{2-5}  
 				\hline \hline \rule{0pt}{0.1cm} & & & & \\[-0.9cm]
             	& \hspace{-0.3cm}\makecell[l]{Upper tail } &  
\makecell[l]{
\hspace{-1.5em}$\bullet$ Thm.~\ref{th:lpp-ut} 
}  &   
\makecell[l]{
\hspace{-0.25em}$\bullet$ Thm.~\ref{Thm_TUE moment weak} (a): 
\\
\hspace{0.25em} weak non-unitarity 
\\
\hspace{0.25em}  $M-N \asymp 1$ \\[0.1cm]
\hspace{-0.75em} $\bullet$ Thm.~\ref{Thm_TUE moment strong} (a): \\
\hspace{0.25em} strong non-unitarity 
\\
\hspace{0.25em}  $M-N \asymp n$ 
}   & 
\makecell[l]{
$\bullet$ Thm.~\ref{Thm_JUE LDP weak} (a):
\\
\hspace{0.5em} one hard, one soft edge
\\
\hspace{0.5em} $\lambda_{1}\asymp 1, \, \lambda_{2} \asymp n$ \\[0.1cm]
$\bullet$ Thm.~\ref{Thm_JUE LDP} (a):\\
\hspace{0.5em} two soft edges
\\
\hspace{0.5em} $\lambda_{1}\asymp n, \, \lambda_{2} \asymp n$ 
}     \\
 				\cline{2-5}  
 				\hline
 				%%%%%%%%%%%%%%%%%%%%%%%%% Weak non-unitarity regime %%%%%%%%%%%%%%%%%%%%%%%%%
 			\end{tabular}
 		}
 	\end{center}
 	\caption{Summary of our results.} \label{Table_model summary}
 \end{table}

\section{Dualities and further results} \label{Section_duality and results}

The underlying algebraic structures in integrable probability models sometimes lead to surprising connections between seemingly different models. These connections are exact identities which we will refer to as \emph{duality formulas}. In this work, we will find two new duality formulas relating the probability $\mathbb{P}(G_{n,m} \leq \ell)$ with two random matrix ensembles (see \eqref{momtrunc} and \eqref{lppjue}).

The approach in \cite{Joh00} to deriving \eqref{lowtail} and \eqref{uptail} is based on a duality relation between $\mathbb{P}(G_{n,m} \leq \ell)$ and the distribution of the largest particle in the \textit{Meixner ensemble}.
This ensemble is defined by a discrete joint probability mass function on tuples $(h_{1},\ldots,h_{m}) \in \mathbb{N}^{m}$ proportional to
\begin{equation}
\prod_{1 \leq i < j \leq m}(h_{i}-h_{j})^{2}\prod_{i=1}^{m}\binom{h_{i}+m-n}{h_{i}}q^{2h_i}. \label{meixpdf}
\end{equation}
Let $h^{(m)}_{\mathrm{max}}(n,q):=\max\{h_{1},\ldots,h_{m}\}$, where $h_{1},\ldots,h_{m}$ are drawn with respect to \eqref{meixpdf}. It then follows from \cite[Proposition 1.3]{Joh00} that for any $\ell\geq 0$, 
\begin{equation} 
\mathbb{P}(G_{n,m} \leq \ell) = \mathbb{P}(h^{(m)}_{\mathrm{max}}(n,q) \leq \ell+m-1). \label{meix-max-intro}
\end{equation}

In this paper we establish two additional duality formulas. We begin with the one relating $\mathbb{P}(G_{n,m} \leq \ell)$ with the TUE.

\subsection{Truncated unitary ensemble: moments of the characteristic polynomial}

Let $U$ be drawn from the \textit{Circular Unitary Ensemble} (CUE) \cite[Chapter 2]{Fo10}, i.e. $U$ is an $M \times M$ unitary matrix sampled with respect to the Haar measure on the unitary group. Let $\TT_{N,M}$ be the top-left $N \times N$ submatrix of $U$. We then say that $\TT_{N,M}$ is drawn from the \textit{Truncated Unitary Ensemble} (TUE) (see \cite{ZS2000} or \cite[Section 2.6]{BF25} for more background).   
The joint probability density of the eigenvalues $\{ \lambda_j \}_{j=1}^N$ of $\TT_{N,M}$ is given by
\begin{equation} \label{def of tUE}
\frac{1}{ Z_{N,M}^{ \rm trunc } }\prod_{1\le j<k \le N} |\lambda_j-\lambda_k|^2 \prod_{j=1}^N (1-|\lambda_j|^2)^{M-N-1} \,dA(\lambda_j), \qquad |\lambda_j| \le 1.  
\end{equation}
Here, $dA(z)=d^2z/\pi$ is the area measure, and the normalizing constant is given explicitly as
\begin{align}
\begin{split}
\label{def of ZNM trunc}
Z_{N,M}^{ \rm trunc } = N! \prod_{k=0}^{N-1} \frac{k! \, \Gamma(M-N)}{ \Gamma(M-N+k+1) } = N! \,  \Gamma(M-N)^N \, \frac{ G(N+1)G(M-N+1) }{ G(M+1) },  
\end{split}
\end{align} 
where $G$ is the Barnes $G$-function defined by 
\begin{equation} \label{def of Barnes G}
	G(z+1)=\Gamma(z)G(z),\qquad G(1)=1,
\end{equation}
see e.g. \cite[Section 5.17]{NIST}.  
Then we have the following connection.
\begin{thm}[\textbf{Duality between LPP and TUE}]
\label{th:trunc-lpp}
Consider the last passage time $G_{n,m}$ in a geometric environment with parameter $q^2$ and assume without loss of generality that $n \geq m$. Then, for any $\ell \in \N$,
\begin{equation}
 \mathbb{P}(G_{n,m} \leq \ell) = c_{\ell,n,m}(1-q^{2})^{nm}\mathbb{E}(|\det(  \mathsf{T}_{ \ell,\ell+n-m  } -q)|^{2m}), \label{momtrunc}
\end{equation} 
where 
\begin{equation}\label{clnm}
c_{\ell,n,m} := \prod_{j=1}^{m}\frac{(\ell+n-m+j-1)!(j-1)!}{(n-m+j-1)!(\ell +j-1)!} = \frac{G(\ell+1)G(m+1)G(\ell+n+1)G(n-m+1)}{G(\ell+m+1)G(\ell+n-m+1)G(n+1)}.
\end{equation}
\end{thm}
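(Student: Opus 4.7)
The plan is to establish \eqref{momtrunc} by expanding both sides as sums of Schur polynomials and matching them via a combinatorial bijection; the main obstacle will lie in the final cell-by-cell matching step.

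For the LPP side, combining the RSK correspondence with the Cauchy identity, specialized so that the two alphabets consist of $n$ and $m$ copies of $q$ respectively, yields the standard Schur-measure representation
\begin{equation*}
\mathbb{P}(G_{n,m}\leq \ell) = (1-q^2)^{nm}\sum_{\mu\,:\,\mu_1\leq \ell,\, \ell(\mu)\leq m} q^{2|\mu|}\, s_\mu(1^n)\, s_\mu(1^m),
\end{equation*}
where each $s_\mu(1^N)$ is the principal specialization (Weyl dimension) of a Schur polynomial.

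For the TUE side, my plan is to expand $\prod_{i=1}^\ell (q-z_i)^m$ using the dual Cauchy identity with an auxiliary alphabet consisting of $m$ copies of $-1/q$, and similarly for $\prod_i(q-\bar z_i)^m$. Multiplying these two expansions and taking the TUE expectation, the radial symmetry of the weight $(1-|z|^2)^{n-m-1}\,dA(z)$ combined with Andréief's identity yields the Schur orthogonality $\mathbb{E}[s_\lambda(z)\,\overline{s_\mu(z)}] = \delta_{\lambda\mu}\,g_\lambda$, with the explicit Gamma product $g_\lambda = \prod_{i=1}^\ell \frac{(\lambda_i+\ell-i)!\,(\ell-i+n-m)!}{(\ell-i)!\,(\lambda_i+\ell-i+n-m)!}$ obtained from $\int_{\mathbb{D}}|z|^{2k}(1-|z|^2)^{n-m-1}dA(z)$. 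The double Schur sum thus collapses to
\begin{equation*}
\mathbb{E}\,|\det(\mathsf{T}_{\ell,\ell+n-m}-q)|^{2m} = \sum_{\lambda\,:\,\lambda_1\leq m,\, \ell(\lambda)\leq\ell} q^{2\ell m-2|\lambda|}\,g_\lambda\,\bigl(s_{\lambda'}(1^m)\bigr)^2.
\end{equation*}

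Finally, the two Schur sums are matched through the bijection $\mu \mapsto \lambda$ sending a partition $\mu$ in the $m \times \ell$ box to the complement of its conjugate $\mu'$ in the $\ell \times m$ box, namely $\lambda_i = \ell - \mu'_{m-i+1}$. Since $|\mu| + |\lambda| = \ell m$, the $q$-powers agree; the conjugate $\lambda'$ is the complement of $\mu$ inside the $m \times \ell$ box, and a short Weyl-dimension check gives $s_{\lambda'}(1^m) = s_\mu(1^m)$ (the specialization depends only on pair-differences of the shifted parts $a_i=\mu_i+m-i$, which are invariant under $a_i \mapsto (\ell+m-1)-a_{m-i+1}$). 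It remains to establish the cell-wise identity $g_{\lambda(\mu)} = c_{\ell,n,m}^{-1}\, s_\mu(1^n)/s_\mu(1^m)$, which is the main technical point: rewriting $g_\lambda$ as a product over cells of $\lambda$ and transporting it via the bijection to a product over the full $\ell\times m$ box divided by a product over cells of $\mu'$, then using the conjugation $(i,k) \in \mu' \Leftrightarrow (k,i) \in \mu$ together with the hook-content formula, one extracts exactly $s_\mu(1^n)/s_\mu(1^m)$, leaving a whole-box prefactor that a routine Gamma / Barnes $G$ manipulation identifies with $c_{\ell,n,m}^{-1}$ in the form \eqref{clnm}.
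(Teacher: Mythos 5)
Your proposal is correct and would work, but it takes a genuinely different (and more self-contained) route than the paper. The paper simply imports the TUE Schur expansion from \cite[Eq.~(2.25)]{SS23}, which is already written as a sum over partitions $\mu$ in the $m\times\ell$ box, and then applies the Meixner-type identity \eqref{meixnerid} to recognize $s_\mu(1_m)^2\prod_j\frac{(\mu_j+n-j)!}{(\mu_j+m-j)!}$ as $s_\mu(1_m)s_\mu(1_n)$ times explicit factorials, after which comparison with \eqref{gmnsum} is immediate. You instead re-derive the TUE moment formula from scratch (dual Cauchy plus Andréief/Schur orthogonality for the radially symmetric weight $(1-|z|^2)^{n-m-1}$), which gives a Schur sum over partitions $\lambda$ in the \emph{transposed} $\ell\times m$ box, and you then bridge the two sums via the complement-conjugate bijection together with a Gamma-product evaluation. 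In effect you re-prove the content of \cite{SS23} as part of the argument — the box-complement bijection is the step that is implicit there. Your plan is sound in all its components; I verified that the radial moment $h_k=\frac{k!\,(n-m-1)!}{(k+n-m)!}$ gives the stated $g_\lambda$, that $s_{\lambda'}(1_m)=s_\mu(1_m)$ under the bijection, and that the ratio $\bigl(s_\mu(1_n)/s_\mu(1_m)\bigr)/g_{\lambda(\mu)}$ is $\mu$-independent and equals $c_{\ell,n,m}$ (computing $s_\mu(1_n)/s_\mu(1_m)$ column-wise, this ratio collapses to $\prod_{j=1}^{\ell}\frac{(n+j-1)!\,(j-1)!}{(m+j-1)!\,(n-m+j-1)!}$, which matches \eqref{clnm} by a short Barnes-$G$ computation). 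The final cell-matching step you flag as the main obstacle is in fact routine once one groups the hook-content product over columns of $\mu$. One small correction: your displayed formula for the bijection should read $\lambda_i=m-\mu'_{\ell-i+1}$ for $i=1,\dots,\ell$ (so that $\lambda$ lies in the $\ell\times m$ box), not $\lambda_i=\ell-\mu'_{m-i+1}$; your verbal description of $\lambda'$ as the complement of $\mu$ in the $m\times\ell$ box is the correct one.
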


As a consequence of Theorem~\ref{th:trunc-lpp}, we can reformulate the result in the context of the asymptotic behaviour of the TUE. In studying TUE, one needs to distinguish between the following two regimes.

\begin{itemize}
	\item \textbf{Weak non-unitarity:} This regime corresponds to $N\to + \infty$ while $M-N$ remains bounded. We will use the notation
	\begin{equation} \label{def of MN weak non-uni}
	M-N= \ss, \qquad \ss \in \N_{>0}. 
	\end{equation} 
   In this regime,  most of the eigenvalues are in a $O(1/N)$ neighbourhood of the unit circle with high probability, see Figure~\ref{Fig_TUE} (A). 
	\smallskip 
	\item \textbf{Strong non-unitarity:} This regime corresponds to $N\to + \infty$ with  $M-N$ being proportional to $N$, and we therefore use the notation
	\begin{equation} \label{def of MN strong non-uni}
		M= (\rho+1)N+\tt, \qquad \rho > 0, \quad \tt  \in \mathbb{R}, \quad M\in \N.
	\end{equation}
   In this regime, the eigenvalues accumulate on the disc centred at $0$ and of radius $1/\sqrt{1+\rho}$, see Figure~\ref{Fig_TUE} (B). It is therefore convenient to rescale the eigenvalues as 
    \begin{equation} \label{def of rescaling}
    	z_j= \sqrt{1+\rho} \, \lambda_j.
    \end{equation}
     The droplet in the $z_{j}$-plane is the unit disc, and the associated equilibrium measure is given by
    \begin{equation}
    	\label{rho strong non-unitary}
      \frac{ \rho		(1+\rho) }{ (1+\rho-|z|^2)^2} \cdot \mathbbm{1}_{\{ |z|<1 \} }\, dA. 
    \end{equation} 
    Note that as $\rho \to \infty$, the measure \eqref{rho strong non-unitary} tends to $\mathbbm{1}_{\{ |z|<1 \} }\, dA$, which is the circular law.
\end{itemize}

\begin{figure}[t]
	\begin{subfigure}{0.35\textwidth}
	\begin{center}	
		\includegraphics[width=\textwidth]{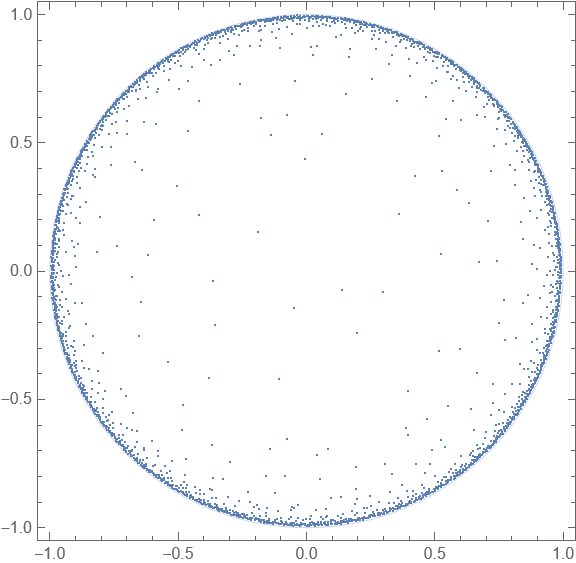}
	\end{center}
	\subcaption{$N=2560, M=2592$}
\end{subfigure}	 \qquad 
\begin{subfigure}{0.35\textwidth}
	\begin{center}	
		\includegraphics[width=\textwidth]{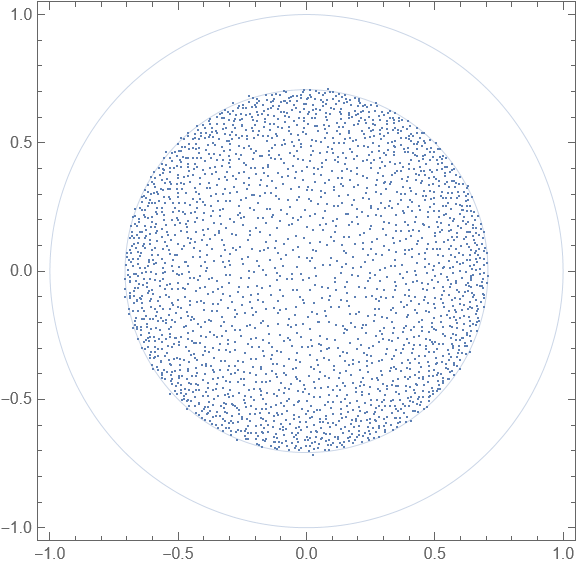}
	\end{center}
	\subcaption{$N=2560, M=5120$}
\end{subfigure}	
	\caption{In each plot, the points are the eigenvalues of a TUE matrix with the indicated values of $N$ and $M$. The thin curves are circles centred at $0$ of radii $1$ and $1/\sqrt{1+\rho}$.} \label{Fig_TUE}
\end{figure}

The asymptotic behaviour of moments of the absolute value of the characteristic polynomial of the Ginibre ensemble has been studied in \cite{WW19, DS22,Byun25} for the case where the exponent is fixed, and in \cite{BSY24} for the case where the exponent is of order $N$. More recently, the corresponding problem for the TUE was investigated in \cite{DMMS25} in the fixed exponent regime. In our subsequent results, we extend this analysis to the regime where the exponent is of order $N$.
In the context of two-dimensional Coulomb gas theory, such problems are referred to as point charge insertions. The associated planar orthogonal polynomials have been studied extensively \cite{BBLM15, KKL25, BGM17, BEG18, LY17, LY23, BFKL25, KLY25, BY23,BSY24}.

For each of the weak and strong non-unitary regimes, one needs to distinguish two regimes in the large $N$ behaviour of $\mathbb{E}  | \det (  \TT_{N,M} -z ) |^{2cN}$, depending on whether $|z|$ is smaller or larger than a critical value. The ``small" and ``large" $|z|$-regimes correspond to the upper and lower tail LPP probabilities, respectively, see also Table \ref{Table_model summary}. This phenomenon can also be interpreted as a topological phase transition of the associated droplet and will be studied in detail in the follow-up paper \cite{BCMSEquilibrium} (see also \cite{BY25, ABK21, CK22, BFL25} for various results on topological phase transitions).

\begin{thm}[\textbf{Moments of the characteristic polynomial of TUE matrices at weak non-unitarity}] \label{Thm_TUE moment weak}
	Suppose $M-N=: \ss \in \N$ is fixed.
	\begin{itemize}
		\item[(a)] \textbf{\textup{(Post-critical regime)}}  Suppose 
$0<|z|< \frac{ 1 }{2c+1}.$ Then as $N \to +\infty$, we have
\begin{align}
\begin{split}
 \mathbb{E}  | \det (  \TT_{N,M} -z ) |^{2cN}  = N^{ \frac{ \ss^2 }{2} }  \mathsf{G}^{ \rm post } \exp\Big( \mathsf{H}_{ 0 }^{ \rm post } N^2+ \mathsf{H}_{ 1 }^{ \rm post } N  \Big) \Big( 1+ \sum_{k=1}^\infty \frac{ d_k }{ N^k } +O( e^{ -\epsilon N } ) \Big),
 \end{split}
\end{align} 
for some $\epsilon >0$, where 
\begin{align*}
& \mathsf{G}^{ \rm post }  = \Big( \frac{c}{c+1} \Big) ^{ \frac{\ss^2}{2} } (2\pi)^{ \frac{ \ss }{2} } \frac{1}{ G(\ss+1) }, \\
& \mathsf{H}_{  0 }^{ \rm post } = -c^2 \log(1-|z|^2), \qquad  \mathsf{H}_{  1 }^{ \rm post } = \ss \log \Big( \frac{1}{(1-|z|^2)^ c } \frac{c^c}{ (c+1)^{c+1} } \Big). 
\end{align*} 
Here, $G$ is Barnes' $G$-function and the $d_k$'s are $N$-independent constants.
	   \smallskip 
		\item[(b)] \textbf{\textup{(Pre-critical regime)}} Suppose 
$\frac{ 1 }{2c+1}<|z|<1.$  Then as $N \to +\infty$, we have 
		\begin{align}
			\begin{split}
				\mathbb{E}  | \det ( \TT_{N,M} -z ) |^{2cN} 
				=   N^{ -\frac{1}{12}+ \frac{ \ss^2 }{2} }  e^{ \zeta'(-1) }  \mathsf{G}^{ \rm pre }  \exp\Big( \mathsf{H}_{  0 }^{ \rm pre } N^2+ \mathsf{H}_{ 1 }^{ \rm pre } N  \Big) \Big( 1+ O( \frac{1}{N} ) \Big),
			\end{split}
		\end{align} 
		where 	\begin{align*}
		 & \mathsf{G}^{ \rm pre }  =  \mathsf{G}^{ \rm post }  \Big(  \frac{ (  1+2c-1/|z|  )( 1+2c-|z| ) }{ 4c^2 }  \Big)^{ -\frac{ 1  }{8} }    \Big( \frac{(1+2c-|z|)(1+2c)}{4c(1+c)} \Big)^{ \frac{ \ss^2 }{2} }    \Big(\frac{ 2(1+c)^2 }{c( 1+2c ) } \Big)^{ \frac{1}{12} }, \\
        & \mathsf{H}_{ 0 }^{ \rm pre }  = -c^2 \log(1-|z|^2) + c^2 \,\mathsf{L}_1, \qquad  	\mathsf{H}_{ 1 }^{ \rm pre }  = \ss \log \Big( \frac{1}{(1-|z|^2)^ c } \frac{c^c}{ (c+1)^{c+1} } \Big) + c\, \mathsf{L}_2.
          \end{align*}
	   Here, $\mathsf{L}_1$ and $\mathsf{L}_2$ are as in Theorem~\ref{Thm_LPP weak} with $\mathsf{n}=\ss$, $\delta=1/c$ and $q=|z|$.  
	\end{itemize} 
\end{thm}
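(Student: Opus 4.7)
The strategy is to chain the two duality identities with the LPP asymptotic expansions of Theorems~\ref{Thm_LPP weak} and~\ref{th:lpp-ut}, under the correspondence $(\mathsf{n},\delta,q)\leftrightarrow(\ss,1/c,|z|)$. Since the TUE density \eqref{def of tUE} is invariant under the rotations $\lambda_{j}\mapsto e^{i\theta}\lambda_{j}$, the moment depends on $z$ only through $q:=|z|$. Assuming for now that $cN\in\mathbb{N}_{>0}$, applying Theorem~\ref{th:trunc-lpp} with $\ell_{\rm LPP}=N$, $m_{\rm LPP}=cN$, $n_{\rm LPP}=cN+\ss$ yields
\begin{equation*}
\mathbb{E}\bigl|\det(\mathsf{T}_{N,N+\ss}-z)\bigr|^{2cN} \;=\; \frac{\mathbb{P}\bigl(G_{cN+\ss,cN}\le N\bigr)}{c_{N,cN+\ss,cN}\,(1-q^{2})^{cN(cN+\ss)}}.
\end{equation*}
The critical value $|z|=1/(2c+1)$ is precisely the condition $1/c=\omega(1,q)=2q/(1-q)$ separating the lower and upper tails of $G_{cN+\ss,cN}$: the pre-critical regime of part~(b) corresponds to $\delta=1/c<\omega(1,q)$, whereas the post-critical regime of part~(a) corresponds to $\delta>\omega(1,q)$. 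Equivalently, via the duality of Theorem~\ref{th:jue-lpp}, these two cases are handled by parts~(b) and~(a) of Theorem~\ref{Thm_JUE LDP weak}, which is the direction in which the analysis is actually performed.

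In the pre-critical case, Theorem~\ref{Thm_LPP weak} applied with LPP size parameter $\tilde{N}=cN$ yields
\begin{equation*}
\log\mathbb{P}(G_{cN+\ss,cN}\le N)=\mathsf{L}_{1}c^{2}N^{2}+\mathsf{L}_{2}cN-\tfrac{1}{12}\log(cN)+\mathsf{L}_{4}+O\bigl(\tfrac{\log N}{N}\bigr),
\end{equation*}
with the $\mathsf{L}_{j}$ evaluated at $(\mathsf{n},\delta,q)=(\ss,1/c,|z|)$. In the post-critical case, Theorem~\ref{th:lpp-ut} with $\gamma=1$ instead gives $\log\mathbb{P}(G_{cN+\ss,cN}\ge N)\sim U_{1}cN$ with $U_{1}<0$, so $\mathbb{P}(G\le N)=1-O(e^{-\epsilon cN})$ and $\log\mathbb{P}(G\le N)$ is absorbed into the exponentially small error term. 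It then remains to expand $\log c_{N,cN+\ss,cN}$ using the Barnes-$G$ asymptotic
\begin{equation*}
\log G(\alpha N+a+1)=\tfrac{\alpha^{2}}{2}N^{2}\log N+\bigl(\tfrac{\alpha^{2}}{2}\log\alpha-\tfrac{3\alpha^{2}}{4}\bigr)N^{2}+a\alpha N\log N+C_{1}(\alpha,a)N+\bigl(\tfrac{a^{2}}{2}-\tfrac{1}{12}\bigr)\log N+C_{2}(\alpha,a)+O(\tfrac{1}{N}),
\end{equation*}
for explicit $C_{1},C_{2}$, and to combine the three factors. A quick sanity check: the seven $G$-factors in \eqref{clnm} pair up so that the $N^{2}\log N$, $N^{2}$ and $N\log N$ coefficients cancel exactly, and the $\log N$ coefficients sum to $-\tfrac{\ss^{2}}{2}$; combined with $\mathsf{L}_{3}=-\tfrac{1}{12}$ this produces the claimed exponent $-\tfrac{1}{12}+\tfrac{\ss^{2}}{2}$ in part~(b), and combined with the vanishing post-critical LPP contribution it produces $\tfrac{\ss^{2}}{2}$ in part~(a). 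Similarly, the post-critical leading term $\mathsf{H}_{0}^{\rm post}N^{2}=-c^{2}\log(1-q^{2})\,N^{2}$ comes entirely from the $(1-q^{2})^{cN(cN+\ss)}$ prefactor.

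The main obstacle is purely algebraic: the constants $\mathsf{G}^{\rm pre/post}$ must be assembled from the Barnes-$G$ constant terms $C_{2}(\alpha,a)$ together with $\mathsf{L}_{4}$ in the pre-critical case, and one has to verify that the resulting logarithms---including the factor $\zeta'(-1)$ and the expression $\bigl((1+2c-|z|)(1+2c-1/|z|)/(4c^{2})\bigr)^{-1/8}$ characteristic of $\mathsf{L}_{4}$---reassemble into the closed form stated in the theorem after the substitution $(\mathsf{n},\delta,q)=(\ss,1/c,|z|)$. The integer restriction $cN\in\mathbb{N}_{>0}$ needed for the duality is inherited in the statement, in analogy with the restriction in Theorem~\ref{th:lpp-lt}; the full asymptotic series $1+\sum_{k\ge 1}d_{k}/N^{k}$ in part~(a) is obtained by carrying the Barnes-$G$ expansion to all orders (the LPP factor being super-polynomially small), while part~(b) only asserts a $1+O(1/N)$ correction because Theorem~\ref{Thm_LPP weak} itself is stated to error $O(\log N/N)$.
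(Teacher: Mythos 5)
Your proposal is correct and follows the same strategy the paper outlines: chaining the duality formulas (Theorems~\ref{th:trunc-lpp}, \ref{th:jue-lpp}) with the JUE/LPP large-deviation asymptotics under the parameter substitution $(\mathsf{n},\delta,q)=(\ss,1/c,|z|)$ and $m_{\rm LPP}=cN$, $\ell_{\rm LPP}=N$. Your identification of the critical radius $|z|=1/(2c+1)\iff 1/c=\omega(1,q)$, the Barnes-$G$ bookkeeping in $c_{\ell,n,m}$ (cancellation of the $N^{2}\log N$, $N^{2}$, $N\log N$ coefficients, the surviving $\log N$ coefficient $-\ss^{2}/2$, and the $N$-coefficient combining with the $(1-q^{2})^{-nm}$ prefactor to give $\mathsf{H}_{1}^{\rm post}$), and the exponentially small upper-tail correction producing the $O(e^{-\epsilon N})$ term in part~(a) are all correctly accounted for.
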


\begin{rem}
In the post-critical regime, the constants $d_k$ can be obtained by combining \eqref{momtrunc} and \eqref{clnm} with Theorem \ref{th:lpp-ut} and the asymptotic expansion of the Barnes \( G \)-function:
\begin{equation}  \label{Barnes G asymp}
\log G(z+1) \sim \frac{z^2 \log z}{2} -\frac34 z^2+\frac{ \log(2\pi) z}{2}-\frac{\log z}{12}+\zeta'(-1)  + \sum_{k=1}^\infty \frac{ B_{2k+2} }{ 4k(k+1)  } \frac{1}{z^{2k}} , \qquad \mbox{as } z\to + \infty
\end{equation}
where $B_k$'s are the Bernoulli numbers, see e.g. \cite[(5.17.5) and (5.11.1)]{NIST}. 
\end{rem}

Due in part to their connection with the Riemann zeta function \cite{KS00}, moments of the absolute value of the CUE characteristic polynomial have been extensively studied, see e.g. \cite{BCL2024,DIK,SF25,NSW20,SW24,AKW22} and the references therein. However, to our knowledge, Theorem~\ref{Thm_TUE moment weak} (b) is new even for the CUE, which corresponds to $\ss = 0$ (Theorem~\ref{Thm_TUE moment weak} (a) with $\ss=0$ can be deduced from \cite{BCL2024}, see Remark \ref{remark:consistency CUE blackstone} below). For convenience, the case $\ss=0$ is stated explicitly in the following corollary.

\begin{cor}[\textbf{Moments of the characteristic polynomial of CUE}] \label{Thm_CUE moment weak} \;
	\begin{itemize}
		\item[(a)] \textbf{\textup{(Post-critical regime)}} Suppose 
$0<|z|< \frac{ 1 }{2c+1}.$  Then as $N \to +\infty$, we have
		\begin{align}
			\begin{split}
			\mathbb{E}  | \det (  \TT_{N,N} -z ) |^{2cN} 
				  =  (1-|z|^2)^{-c^2N^2} \Big( 1+ \sum_{k=1}^\infty \frac{ d_k }{ N^k } +O( e^{ -\epsilon N } ) \Big), 
			\end{split}
		\end{align} 
	for some $\epsilon >0$ and where the $d_k$'s are $N$-independent constants.
		\smallskip 
		\item[(b)] \textbf{\textup{(Pre-critical regime)}}  Suppose 
$\frac{ 1 }{2c+1}<|z|<1.$  Then as $N \to +\infty$, we have 
		\begin{align}
			\begin{split}
				\mathbb{E}  | \det ( \TT_{N,N} -z ) |^{2cN} 
				&=   N^{ -\frac{1}{12}  }  e^{ \zeta'(-1) }   (1-|z|^2)^{-c^2N^2} e^{ c^2 \, \mathsf{L}_1 N^2   }   
				\\
				&\quad \times  \Big(  \frac{ (  1+2c-1/|z|  )( 1+2c-|z| ) }{ 4c^2 }  \Big)^{ -\frac{ 1  }{8} }       \Big(\frac{ 2(1+c)^2 }{c( 1+2c ) } \Big)^{ \frac{1}{12} }  \Big( 1+ O( \frac{1}{N} ) \Big). 
			\end{split}
		\end{align} 
	 Here, $\mathsf{L}_1$ is as in Theorem~\ref{Thm_LPP weak} with $\delta=1/c$ and $q=|z|$. 
	\end{itemize} 
\end{cor}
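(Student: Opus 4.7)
The plan is to derive Corollary~\ref{Thm_CUE moment weak} as the direct specialization $\ss = 0$ of Theorem~\ref{Thm_TUE moment weak}. Recall that $\TT_{N,M}$ is obtained by taking the top-left $N\times N$ block of an $M\times M$ Haar-unitary matrix; when $M=N$ (i.e.\ $\ss = M - N = 0$), no truncation occurs and $\TT_{N,N}$ is itself a CUE matrix. Hence the left-hand sides of parts (a) and (b) of the corollary coincide exactly with the left-hand sides of the corresponding parts of Theorem~\ref{Thm_TUE moment weak} with $\ss=0$, and all that remains is to simplify the right-hand sides.

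For part (a), substituting $\ss = 0$ into the post-critical expressions gives $N^{\ss^{2}/2} = 1$ and
\begin{equation*}
\mathsf{G}^{\rm post} = \Big(\tfrac{c}{c+1}\Big)^{0}(2\pi)^{0}\,\tfrac{1}{G(1)} = 1,
\end{equation*}
since $G(1)=1$ by the normalisation \eqref{def of Barnes G}. The subleading exponential term $\mathsf{H}_{1}^{\rm post}N$ carries an overall factor of $\ss$ and thus vanishes, while $\mathsf{H}_{0}^{\rm post} = -c^{2}\log(1-|z|^{2})$ is unchanged. Plugging in produces the stated formula $(1-|z|^{2})^{-c^{2}N^{2}}\bigl(1+\sum_{k\geq 1} d_{k}/N^{k}+O(e^{-\epsilon N})\bigr)$.

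For part (b), the same substitution collapses the prefactor $N^{-1/12+\ss^{2}/2}$ to $N^{-1/12}$, and in the formula for $\mathsf{G}^{\rm pre}$ the middle factor $\bigl((1+2c-|z|)(1+2c)/(4c(1+c))\bigr)^{\ss^{2}/2}$ reduces to $1$, leaving only the curvature-type correction $\bigl((1+2c-1/|z|)(1+2c-|z|)/(4c^{2})\bigr)^{-1/8}$ and the factor $\bigl(2(1+c)^{2}/(c(1+2c))\bigr)^{1/12}$. The linear-in-$N$ exponent vanishes because $\mathsf{L}_{2}$, as defined in Theorem~\ref{Thm_LPP weak} with $\mathsf{n}=\ss=0$, has $\mathsf{n}$ as an overall prefactor, so $\mathsf{H}_{1}^{\rm pre} = 0$; the $N^{2}$-exponent retains $-c^{2}\log(1-|z|^{2})+c^{2}\mathsf{L}_{1}$ as claimed.

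There is no real obstacle here: the only points to check are the vanishings of the $\ss$-dependent pieces and the identification $G(1)=1$, which are immediate. Consequently, Corollary~\ref{Thm_CUE moment weak} follows as a transparent consequence of Theorem~\ref{Thm_TUE moment weak}, and the nontrivial analytic content is deferred to the proof of that theorem in Section~\ref{Section_large deviation JUE}.
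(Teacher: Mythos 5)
Your proposal is correct and is exactly how the paper intends this corollary to be obtained: the paper introduces Corollary~\ref{Thm_CUE moment weak} with the words ``For convenience, the case $\ss=0$ is stated explicitly in the following corollary,'' so it is simply Theorem~\ref{Thm_TUE moment weak} specialized to $\ss=0$ (which is permitted since $\ss\in\N=\{0,1,\ldots\}$ there). Your bookkeeping of the vanishing $\ss$-dependent factors, the identification $G(1)=1$, and the observation that $\mathsf{L}_2$ carries an overall factor $\mathsf{n}=\ss$ are all exactly what is needed.
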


\begin{rem}(Consistency check between Corollary \ref{Thm_CUE moment weak} (a) and \cite[Theorem 1.1]{BCL2024})\label{remark:consistency CUE blackstone}
In \cite[Theorem 1.1 (a)]{BCMSEquilibrium}, we show that the equilibrium measure associated with the potential \( V(\zeta) = -2c \log |\zeta - z| \) is given by
\begin{equation} \label{def of eq msr 1D post}
			d\mu_{ \mathsf{W} }(\theta) = \frac{1}{2\pi}  \frac{ 1+(1+2c)z^2-2(1+c)z \cos \theta }{ 1+z^2-2z \cos\theta  }   \mathbbm{1}_{ (-\pi,\pi] }(\theta) \,d\theta . 
		\end{equation}
On the other hand, \cite[Theorem 1.1]{BCL2024} implies for $0< |z| < \frac{ 1 }{2c+1}$ that
\begin{align*}
\mathbb{E}  | \det (  \TT_{N,N} -z ) |^{2cN} = \exp \bigg(  \bigg[ -\frac{1}{2}\int_{-\pi}^{\pi} V(e^{i\theta})\frac{d\theta}{2\pi} -\frac{1}{2}\int_{-\pi}^{\pi} V(e^{i\theta})d\mu_{ \mathsf{W} }(\theta) \bigg] n^{2} + O(n^{-1}) \bigg).
\end{align*}
A direct residue computation shows that $\int_{-\pi}^{\pi} V(e^{i\theta})\frac{d\theta}{2\pi}=0$ and that 
\begin{align*}
-\frac{1}{2}\int_{-\pi}^{\pi} V(e^{i\theta}) \, d\mu_{ \mathsf{W} }(\theta) = -c^{2}\log(1-|z|^{2}),
\end{align*}
which is consistent with Corollary \ref{Thm_CUE moment weak} (a).
\end{rem}

Next, we consider the asymptotic expansion in the strong non-unitarity regime. 

\begin{thm}[\textbf{Moments of the characteristic polynomial of TUE matrices at strong non-unitarity}] \label{Thm_TUE moment strong}
Let $\rho>0$ be fixed, and let $\tt=\tt_{N}  \in \mathbb{R}$ be bounded and such that $M= (\rho+1)N+\tt \in \N$ for $N\in \N$.
	\begin{itemize}
		\item[(a)] \textbf{\textup{(Post-critical regime)}} Suppose 
\begin{align*}
0<\frac{|z|}{\sqrt{1+\rho}}< \dfrac{  \sqrt{(\rho+c+1)(c+1)}-\sqrt{(\rho+c)c} }{\rho+2c+1} .
\end{align*}	
Then as $N\to + \infty$, we have
\begin{equation}
	 	\mathbb{E}  | \det ( \sqrt{1+\rho}\, \TT_{N,M} -z ) |^{2cN} 
= N^{ \frac{1}{12} } e^{ -\zeta'(-1) } \mathcal{G}^{ \rm post }  \exp\Big( \mathcal{H}_0^{ \rm post } N^2+ \mathcal{H}_1^{ \rm post } N  \Big) \Big( 1+ \sum_{k=1}^\infty \frac{ b_k }{N^k} + O(e^{-\epsilon N})  \Big)  ,
\end{equation}
		for some $\epsilon >0$, where 
        \begin{align*}
			\mathcal{G}^{ \rm post } & = \Big( \frac{c}{c+1}\Big)^{ \frac{1}{12} }   \Big( \frac{\rho}{\rho+1} \Big)^{ -\frac{6\tt^2-1}{12} } \Big( \frac{(\rho+1)(\rho+c)}{\rho(\rho+c+1) } \Big)^{ \frac{6\tt^2-1}{12} }, \\
			\mathcal{H}_0^{ \rm post } & = - c (\rho+c) \log\Big( 1- \frac{|z|^2}{1+\rho} \Big) + c \log (1+\rho) 
			\\
			&\quad +\frac12 \bigg(  \log\Big( \frac{(c+1)^{(c+1)^2} }{ c^{c^2} }\Big)  +  \log\Big( \frac{(\rho+1)^{(\rho+1)^2} }{ \rho^{\rho^2} }\Big) - \log\Big( \frac{ (c+\rho+1)^{ (c+\rho+1)^2 } }{ (c+\rho)^{ (c+\rho)^2 } } \Big) \bigg),
			\\
			\mathcal{H}_1^{ \rm post } &= -\tt \bigg(  c \log\Big( 1- \frac{|z|^2}{1+\rho} \Big) +  \log\Big( \frac{ \rho^{ \rho } }{ (\rho+1)^{ \rho+1 } } \Big) -\log\Big( \frac{ (\rho+c)^{ \rho+c } }{ (\rho+c+1)^{ \rho+c+1 } } \Big) \bigg) .   
		\end{align*}
        Here, the $b_k$'s are $N$-independent constants.
         \smallskip 
		\item[(b)] \textbf{\textup{(Pre-critical regime)}}  Suppose 
\begin{align*}
  \dfrac{  \sqrt{(\rho+c+1)(c+1)}-\sqrt{(\rho+c)c} }{\rho+2c+1} < \frac{|z|}{\sqrt{1+\rho}} <1 .
\end{align*}
Then as $N\to + \infty$, we have 
			\begin{align}
			\begin{split}
			\mathbb{E}  | \det ( \sqrt{1+\rho}\, \TT_{N,M} -z ) |^{2cN} 
				 = 	\mathcal{G}^{ \rm pre } \exp\Big( \mathcal{H}_0^{ \rm pre } N^2+ \mathcal{H}_1^{ \rm pre } N  + O(\frac{1}{N}) \Big)  ,
			\end{split}
		\end{align} 
	     where 
\begin{align*}
\mathcal{G}^{ \rm pre } = c^{ -\frac{1}{12} } \mathcal{G}^{ \rm post } e^{ L_4 }, \qquad  \mathcal{H}_0^{ \rm pre } = \mathcal{H}_0^{ \rm post } +c^2\,L_1,\qquad  \mathcal{H}_1^{ \rm pre } = \mathcal{H}_1^{ \rm post }  +c\,L_2,
\end{align*}
and $L_1,L_2,L_4$ are as in Theorem~\ref{th:lpp-lt} with 
          \begin{equation}
        \gamma=\frac{\rho}{c}+1, \qquad  \mathsf{n}=\tt, \qquad \delta= \frac{1}{c}, \qquad q=\frac{|z|}{\sqrt{1+\rho}}. 
        \end{equation}
	\end{itemize} 
\end{thm}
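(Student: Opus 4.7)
The strategy is to derive Theorem \ref{Thm_TUE moment strong} by inverting the LPP--TUE duality of Theorem \ref{th:trunc-lpp} and inserting the LPP asymptotics of Theorems \ref{th:lpp-lt} and \ref{th:lpp-ut}. By rotational invariance of the Haar measure, the left-hand side depends only on $|z|$, so I may assume $z=q\sqrt{1+\rho}$ with $q\in(0,1)$. Homogeneity of the determinant gives
\begin{equation*}
\mathbb{E}\,|\det(\sqrt{1+\rho}\,\TT_{N,M}-z)|^{2cN} = (1+\rho)^{cN^{2}}\,\mathbb{E}\,|\det(\TT_{N,M}-q)|^{2cN}.
\end{equation*}
Provided $cN\in\mathbb{N}$, the duality \eqref{momtrunc} applied with $\ell=N$, $m=cN$, $n=(c+\rho)N+\tt$ (so that $n-m=M-N$) yields
\begin{equation*}
\mathbb{E}\,|\det(\TT_{N,M}-q)|^{2cN} = \frac{\mathbb{P}(G_{n,m}\le \ell)}{c_{\ell,n,m}\,(1-q^{2})^{nm}}.
\end{equation*}
Setting $N_{\mathrm{LPP}}:=m=cN$, this recasts the problem in LPP notation with $\gamma=1+\rho/c$, $\mathsf{n}=\tt$, $\delta=1/c$, precisely the parameter translation stated in the theorem.

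The next step is to decide which LPP tail is relevant. A short algebraic check on $\omega(\gamma,q)=(1+q\sqrt{\gamma})^{2}/(1-q^{2})-1$ shows that $\delta=\omega(\gamma,q)$ is equivalent to $q/\sqrt{1+\rho}=(\sqrt{(\rho+c+1)(c+1)}-\sqrt{(\rho+c)c})/(\rho+2c+1)$. Thus the post-critical TUE regime corresponds to the upper-tail LPP regime $\delta>\omega(\gamma,q)$, treated by Theorem \ref{th:lpp-ut}, whereas the pre-critical TUE regime corresponds to the lower-tail LPP regime $\delta<\omega(\gamma,q)$ with $\gamma>1$, treated by Theorem \ref{th:lpp-lt}. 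In each case one substitutes the corresponding expansion into the inverted duality.

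The remaining task is to expand the prefactors. The constant $c_{\ell,n,m}$ admits a closed form \eqref{clnm} purely in terms of Barnes $G$-functions, so the asymptotic expansion \eqref{Barnes G asymp} produces $N^{2}$, $N$, $\log N$ and constant contributions. Combining these with the factors $(1+\rho)^{cN^{2}}$ and $(1-q^{2})^{-nm}$ and with the rate-function expansions $L_{1},L_{2},L_{4}$ (respectively $U_{1},U_{2},U_{3}$), one reads off $\mathcal{H}_{0}^{\ast},\mathcal{H}_{1}^{\ast},\mathcal{G}^{\ast}$ and the $\log N$ exponents $\pm\tfrac{1}{12}$. The main obstacle is precisely this final algebraic simplification: every coefficient appearing in the stated result is a sum of many transcendental pieces coming from $c_{\ell,n,m}$, from $(1-q^{2})^{nm}$, and from the LPP rate functions, and delicate cancellations must be verified under the substitution $\gamma=(c+\rho)/c$, $\delta=1/c$, $q=|z|/\sqrt{1+\rho}$ to reduce them to the symmetric form written in the theorem. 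A secondary, minor issue is the constraint $cN\in\mathbb{N}$ required by the duality; this can be handled by restricting $N$ to an arithmetic progression in which $cN$ is integral and then extending to general $c$ by log-convexity of $c\mapsto\log\mathbb{E}\,|\det(\TT_{N,M}-q)|^{2cN}$ (via H\"older's inequality) combined with the smoothness of the candidate expansion in $c$.
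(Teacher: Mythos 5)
Your proposal follows the same route the paper intends: it reads off the TUE expansion from the LPP--TUE duality of Theorem \ref{th:trunc-lpp} combined with the LPP tail asymptotics (which in the paper are in turn derived from the JUE results of Section \ref{Section_large deviation JUE}). The parameter dictionary $\ell=N$, $m=cN$, $n=(c+\rho)N+\tt$, giving $\gamma=1+\rho/c$, $\mathsf{n}=\tt$, $\delta=1/c$, $q=|z|/\sqrt{1+\rho}$, and the identification of post-critical with the LPP upper tail and pre-critical with the LPP lower tail are exactly right (one small slip: after setting $z=q\sqrt{1+\rho}$ the threshold condition is on $q$ itself, not on $q/\sqrt{1+\rho}$). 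The remaining work is indeed the Barnes $G$ expansion of $c_{\ell,n,m}$ and the algebraic simplification; note that the $\log N$ coefficient $L_3=-\tfrac{1}{12}$ must be tracked too, since it cancels the $+\tfrac{1}{12}\log N$ coming from $c_{\ell,n,m}$ in the pre-critical case (producing the $c^{-1/12}$ in $\mathcal{G}^{\rm pre}$), while in the post-critical case only $c_{\ell,n,m}$ contributes, giving the $N^{1/12}$.

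The one place where your argument needs more care is the integrality constraint $cN\in\N$. H\"older log-convexity of $\kappa\mapsto\log\mathbb{E}|\det(\TT_{N,M}-q)|^{2\kappa}$ does sandwich $\kappa=cN$ between the neighbouring integers $m$ and $m+1$, but because the leading coefficient $\mathcal{H}_0$ is strictly convex in $c$, the convexity interpolant $(1-\theta)\mathcal{H}_0(m/N)+\theta\mathcal{H}_0((m+1)/N)$ differs from $\mathcal{H}_0(c)$ by an amount of size $(c_2-c_1)^2\asymp N^{-2}$; after multiplying by $N^2$ this leaves an $O(1)$ uncertainty, so convexity alone does not pin down the constant term $\mathcal{G}^{\ast}$. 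This subtlety is left implicit in the paper as well, but it is a real gap in the interpolation step as you have sketched it, and either needs to be closed by a continuity-in-$c$ argument at the level of the integral representation (as is done, e.g., in Fisher--Hartwig analyses of Toeplitz determinants for non-integer exponents) or the claim should be restricted to $N$ with $cN\in\N$.
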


\begin{rem}(Consistency check between Theorem~\ref{Thm_TUE moment strong} (a) and  \cite[Theorem 2.7]{BSY24})
Theorem \ref{Thm_TUE moment strong} is valid for $\rho$ fixed. Nevertheless, by taking formally the limit $\rho \to \infty$ in Theorem~\ref{Thm_TUE moment strong} (a), one recovers the result \cite[Theorem 2.7]{BSY24} about the characteristic polynomials of the Ginibre ensemble. Indeed, we have
	\begin{align*}
		\lim_{ \rho \to \infty } \mathcal{H}_0^{ \rm post }= c|z|^2-\frac{3c}{2} +\frac12   \log\Big( \frac{(c+1)^{(c+1)^2} }{ c^{c^2} }\Big), \qquad \lim_{ \rho \to \infty } \mathcal{H}_1^{ \rm post }=0, \qquad \lim_{ \rho \to \infty } \mathcal{G}^{  \rm post  } = \Big(\frac{c}{c+1}\Big)^{ \frac{1}{12} },
	\end{align*}
which is consistent with \cite[Theorem 2.7]{BSY24}.
\end{rem}

\subsection{Jacobi unitary ensemble: large deviation probabilities of the largest eigenvalues}

We now present our second duality formula, which relates $\mathbb{P}(G_{n,m} \leq \ell)$ with the JUE. 
To formulate this, let $p$ and $n$ be two integers such that $p\geq n$ and let $X$ be an $n \times p$ matrix whose entries are independent complex normal random variables $X_{ij} \sim N_{\mathbb{C}}(0,1)$. The matrix $W = XX^{*}$ is called a complex Wishart matrix with $p$ degrees of freedom. (In fact, it is well known that the largest eigenvalue of $W$ is closely related to the LPP model, but in the case where the weights in \eqref{weights} are exponential rather than geometric \cite[Proposition 1.4]{Joh00}.) For the model with geometric weights studied here, we consider the random matrix 
\begin{equation} \label{def of JUE matrix}
J := W_{1}(W_{1}+W_{2})^{-1},
\end{equation}
where $W_{1}$ and $W_{2}$ are independent Wishart matrices with $p_{1}$ and $p_{2}$ degrees of freedom, respectively. The $n$ eigenvalues $x_{1},\ldots,x_{n}$ of $J$ all lie in the interval $[0,1]$ and have the joint probability density function
\begin{equation}  
P(x_{1},\ldots,x_{n}) = \frac{1}{Z_{n}(\lambda_1,\lambda_2)}\,\prod_{1 \leq i < j \leq n}(x_{j}-x_{i})^{2} \prod_{j=1}^{n}x_{j}^{\lambda_{1}}(1-x_{j})^{\lambda_{2}} ,  \label{juepdf}
\end{equation}
where $\lambda_{1} = p_{1}-n$ and $\lambda_{2} = p_{2}-n$ (see e.g. \cite[Chapter 3]{Fo10}), and $Z_{n}(\lambda_1,\lambda_2)$ is a Selberg integral (see e.g. \cite[Eq.~(1.1)]{FW08}) that can be written as 
\begin{align} \label{def of JUE partition function}
\begin{split}
Z_n(\lambda_1,\lambda_2) &=  \prod_{ j=0 }^{n-1}   \frac{ (j+1)!\, \Gamma( \lambda_1+1+j   )  \Gamma( \lambda_2+1+j   )    }{ \Gamma(  \lambda_1+\lambda_2 +1 +  n+j   )    }
\\
&= G(n+2) \frac{ G( \lambda_1+n+1 ) }{ G(\lambda_1+1) } \frac{ G( \lambda_2+n+1 ) }{ G(\lambda_2+1) } \frac{  G( \lambda_1+\lambda_2+n+1 )  }{ G( \lambda_1+\lambda_2+2n+1 ) },
\end{split}
\end{align}
where $G$ is the Barnes $G$-function \eqref{def of Barnes G}. 
The probability density \eqref{juepdf} is known as the \textit{Jacobi Unitary Ensemble} (JUE) with $n$ particles and parameters $\lambda_{1}, \lambda_{2}$. 
Let $$x^{(n)}_{\mathrm{max}}(\lambda_1,\lambda_2) = \mathrm{max}\{x_{i}\}_{i=1}^{n},$$ where $x_{1},\ldots,x_{n}$ are drawn with respect to \eqref{juepdf}. The following is our second duality formula.

\begin{thm}[\textbf{Duality between the LPP and JUE}]
\label{th:jue-lpp}
Consider the last passage time $G_{n,m}$ in a geometric environment with parameter $q^{2}$ and assume without loss of generality that $n \geq m$. Then, for any $\ell \in \N$,
\begin{equation}
\mathbb{P}(G_{n,m} \leq \ell) = \mathbb{P}(x^{(m)}_{\mathrm{max}}(n-m,\ell) \leq 1-q^2). \label{lppjue}
\end{equation} 
\end{thm}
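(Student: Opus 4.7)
The plan is to establish the duality by expanding both sides of~\eqref{lppjue} as sums over partitions $\lambda$ fitting inside the $m \times \ell$ rectangle, and then matching the two sums term-by-term via the hook-content formula for Schur polynomials.

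\emph{LPP side.} Applying the Robinson--Schensted--Knuth correspondence to the $n \times m$ nonnegative-integer matrix $\omega$ of geometric weights, one obtains a bijection $\omega \leftrightarrow (P,Q)$ where $P,Q$ are semistandard Young tableaux of a common shape $\lambda$, with entries in $[n]$ and $[m]$ respectively (using $n \geq m$). Under this bijection $G_{n,m}(\omega) = \lambda_{1}$ and $|\omega| = |\lambda|$. Since the number of such pairs of shape $\lambda$ equals $s_{\lambda}(1^{n})\, s_{\lambda}(1^{m})$, summing the weights gives
\begin{equation*}
\mathbb{P}(G_{n,m} \leq \ell) = (1-q^{2})^{nm} \sum_{\lambda \subseteq (\ell^{m})} s_{\lambda}(\underbrace{q,\ldots,q}_{n}) \, s_{\lambda}(\underbrace{q,\ldots,q}_{m}). \qquad (\star)
\end{equation*}

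\emph{JUE side.} In the numerator of $\mathbb{P}(x^{(m)}_{\max}(n-m,\ell) \leq 1-q^{2})$, the substitution $x_{i} = (1-q^{2})(1-t_{i})$ maps $[0, 1-q^{2}]^{m}$ onto $[0,1]^{m}$, pulls out an overall factor $(1-q^{2})^{nm}$, and converts $(1-x_{i})^{\ell}$ into $(q^{2}+(1-q^{2})t_{i})^{\ell}$. Viewing the latter as the double product $\prod_{i=1}^{m}\prod_{j=1}^{\ell} q^{2}(1+\tfrac{1-q^{2}}{q^{2}}t_{i})$ and applying the dual Cauchy identity yields
\begin{equation*}
\prod_{i=1}^{m}\bigl(q^{2}+(1-q^{2})t_{i}\bigr)^{\ell} = q^{2m\ell} \sum_{\lambda\subseteq(\ell^{m})}\Bigl(\tfrac{1-q^{2}}{q^{2}}\Bigr)^{|\lambda|} s_{\lambda}(t_{1},\ldots,t_{m})\, s_{\lambda'}(1^{\ell}).
\end{equation*}
The residual Selberg-type integrals $\int_{[0,1]^{m}} \Delta(t)^{2} s_{\lambda}(t) \prod_{i}(1-t_{i})^{n-m} dt$ are evaluated in closed form via Kadell's Schur-insertion formula, producing $s_{\lambda}(1^{m})$ multiplied by an explicit ratio of gamma functions. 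Dividing by $Z_{m}(n-m,\ell)$ from~\eqref{def of JUE partition function} then recasts $\mathbb{P}(x^{(m)}_{\max}(n-m,\ell) \leq 1-q^{2})$ as a $(1-q^{2})^{nm}$-prefactor times a sum over $\lambda \subseteq (\ell^{m})$ of explicit expressions involving $s_{\lambda}(1^{m})$, $s_{\lambda'}(1^{\ell})$, and $(q,n,m,\ell)$.

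\emph{Matching and main obstacle.} The final step is to check this equals $(\star)$ term-by-term. Using the hook-content formulas
\begin{equation*}
s_{\lambda}(1^{n}) = \prod_{(i,j)\in\lambda}\frac{n+j-i}{h_{\lambda}(i,j)}, \qquad s_{\lambda'}(1^{\ell}) = \prod_{(i,j)\in\lambda}\frac{\ell+i-j}{h_{\lambda}(i,j)},
\end{equation*}
the required identity reduces to an elementary cell-by-cell rearrangement of products indexed by $(i,j)\in\lambda$. The main obstacle is precisely this combinatorial bookkeeping: one must verify that Kadell's gamma-shifts together with the Selberg normalization $Z_{m}(n-m,\ell)$ supply exactly the ratio $\prod_{(i,j)\in\lambda}(n+j-i)/(\ell+i-j)$, and that the powers of $q$ and $1-q^{2}$ balance correctly (namely $q^{2|\lambda|}$ in $(\star)$ against $q^{2m\ell-2|\lambda|}(1-q^{2})^{|\lambda|}$ on the JUE side). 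Each individual piece is elementary, but tracking all the offsets $(n, m, \ell, \lambda_{i}, i)$ simultaneously requires care.
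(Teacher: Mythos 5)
Your LPP expansion $(\star)$ is correct (it is \eqref{gmnsum} after pulling out $q^{|\lambda|}$ from each Schur factor), and the JUE-side steps — substitution $x_i=(1-q^2)(1-t_i)$, dual Cauchy expansion, Kadell's Schur-insertion formula, division by $Z_m(n-m,\ell)$ — are individually sound. The gap is the final matching step: the two $\lambda$-sums are \emph{not} equal term by term, so the "combinatorial bookkeeping" you defer is not merely tedious, it is impossible. Note that the Kadell gamma-ratios and the Selberg normalization are $q$-independent, so a term-by-term identity would require $q^{2|\lambda|}$ on the LPP side to be a $q$-free constant times $q^{2m\ell-2|\lambda|}(1-q^2)^{|\lambda|}$ on the JUE side; these are genuinely different functions of $q$ for any fixed $\lambda$ with $2|\lambda|\neq m\ell$, and even when $2|\lambda|=m\ell$ the factor $(1-q^2)^{|\lambda|}$ has no LPP-side counterpart. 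A concrete check: for $n=m=\ell=1$, both probabilities equal $1-q^4$, and your LPP sum reads $(1-q^2)\cdot 1 + (1-q^2)\cdot q^2$ over $\lambda\in\{\varnothing,(1)\}$, while your JUE expansion reads $2(1-q^2)q^2 + (1-q^2)^2$ over the same two partitions. The totals agree; the terms do not. The partition $\lambda$ labels the RSK shape in $(\star)$ and labels a piece of the dual Cauchy factorization of $\prod_i(q^2+(1-q^2)t_i)^\ell$ on the JUE side, and these two roles are not in bijection.

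The paper proceeds in the opposite direction and never attempts a term-by-term comparison. Starting from \eqref{gmnsum}, it substitutes the spherical Schur-average identity \eqref{lol10} (from \cite{FK07}) to rewrite each $s_\lambda(1_m)s_\lambda(1_n)$ as an integral over $[0,\infty)^m$ whose integrand contains $s_\lambda(\bm x)$ and a factor $s_{\lambda'}(1_\ell)$; the resulting $\lambda$-sum can then be \emph{resummed} (dual Cauchy applied in reverse) into the single closed-form factor $\prod_j(1+q^2x_j)^\ell$ inside the integral, and two changes of variables carry this single Selberg-type integral to \eqref{defeigjue} at $x=1-q^2$. To salvage your approach you would need to recognize that the two expansions are dual in this resummation sense rather than identical termwise, which essentially forces you back onto the paper's route.
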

The advantage of formula \eqref{lppjue} is that, as previously mentioned, the right-hand side can be evaluated asymptotically using results from \cite{CG21, Fo12}. 

We mention that large deviation probabilities in Hermitian random matrix ensembles (such as the JUE) have been extensively studied using several approaches, including the Coulomb gas method \cite{DM08}, loop equations and $\frac{1}{N}$ expansions \cite{BEMN11,BG13} and orthogonal polynomials \cite{NM11,CFWW25}. Other such results, for one-dimensional log-gases, can be found in e.g. \cite{BDG01, DM06, DM08, AGKWW14, KC10, RKC12, PS16, VMB07, WKG15, MV09, Fo12, FW12, MS14}; most of these works are based on the Coulomb gas method and primarily focus on the leading-order asymptotics of such probabilities, see also \cite{BF25a}.  

For the JUE, the leading order term of the lower tail probability (of order $n^{2}$) was obtained in \cite{RKC12} via the Coulomb gas method. In \cite{CG21}, formulas for all the terms up to and including the constant term were derived for a wide class of potentials, with some of these terms expressed as integrals against an equilibrium measure. By evaluating these integrals explicitly and applying Theorem \ref{th:jue-lpp}, we obtain the closed-form expressions stated in Theorem \ref{th:lpp-lt}.  

For the upper tail probability, the leading order term (of order $n$) was obtained in \cite{RKC12}. The results in \cite{CG21} do not apply to the upper tail probability; hence, to derive subleading terms, we instead use results from \cite{Fo12} on the eigenvalue density of the JUE outside the support of the equilibrium measure (see also \cite[Eq. (14.136)]{Fo10}).  

When the parameters $\lambda_1$ and $\lambda_2$ satisfy $\lambda_1/n \to \alpha \ge 0$ and $\lambda_2/n \to \beta \ge 0$, the empirical measure $\frac{1}{n}\sum_{j=1}^{n}\delta_{x_{j}}$ of the JUE converges almost surely, as $n\to + \infty$, to the deterministic measure 
\begin{equation} \label{def of Wachter distribution}
d\mu_{ \rm W }(x):=\frac{ \alpha+\beta+2  }{2\pi} \frac{ \sqrt{ (b-x)(x-a) } }{ x(1-x) } \,dx, \qquad x\in[a,b],  
\end{equation}
where $a,b\in [0,1]$ are given by \eqref{x pm lpp} with $\alpha=\gamma-1$ and $\beta=\delta$. 
The distribution \eqref{def of Wachter distribution} was obtained in \cite{Wach80}, and is sometimes called the Wachter distribution, see also \cite[Corollary 4.1]{Co05} and \cite{MSV97} where this distribution appears in  different contexts.

\begin{figure}[t]
	\begin{subfigure}{0.32\textwidth}
	\begin{center}	
		\includegraphics[width=\textwidth]{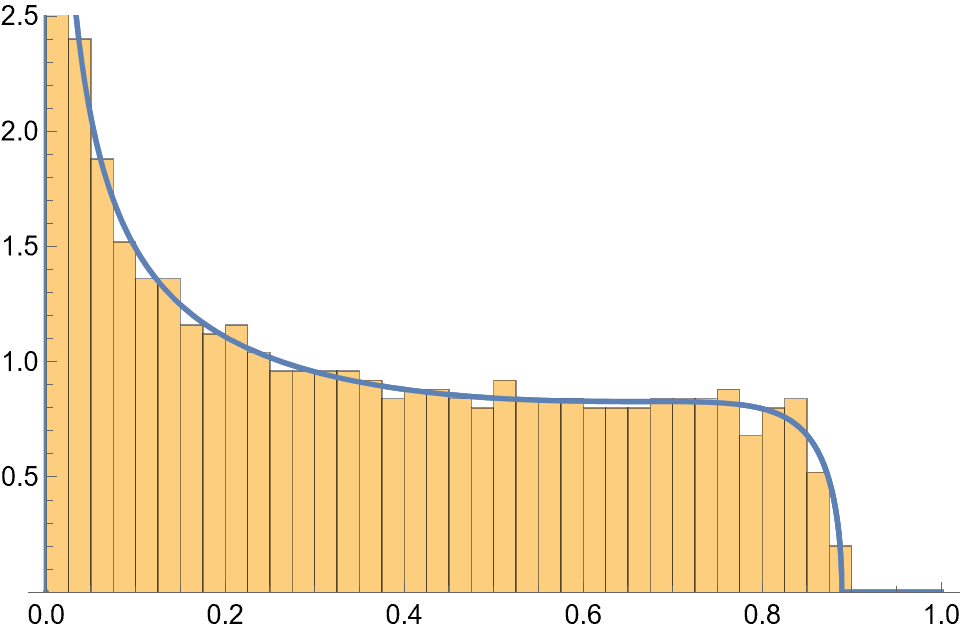}
	\end{center}
	\subcaption{$\alpha=0, \beta=1$}
\end{subfigure}	\qquad
\begin{subfigure}{0.32\textwidth}
	\begin{center}	
		\includegraphics[width=\textwidth]{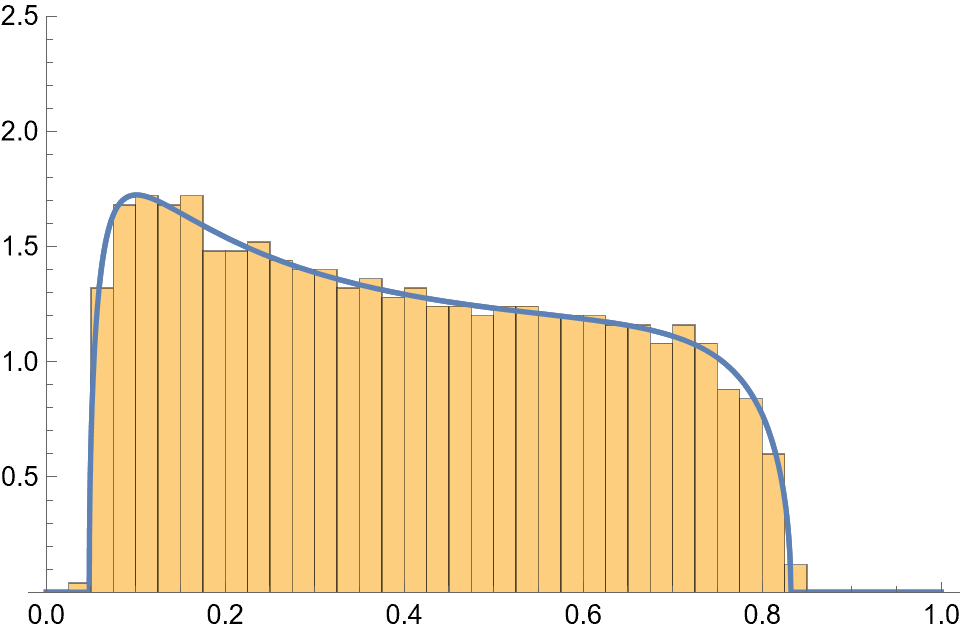}
	\end{center}
	\subcaption{$\alpha=1, \beta=2$}
\end{subfigure}	 
	\caption{Histograms of the eigenvalues of a JUE matrix \eqref{def of JUE matrix} of size \( n = 1000 \), for various values of \( \alpha \) and \( \beta \). The corresponding limiting densities \eqref{def of Wachter distribution} are the solid curves.} \label{Fig_JUE}
\end{figure} 
 
By Theorem~\ref{th:jue-lpp}, the almost-square and rectangular regimes in the LPP problem correspond, for the JUE, to $m\to + \infty$ and $\lambda_1 = n - m$ being of order $1$ or proportional to $m$, respectively. In the first case, $\alpha=0$ and $d\mu_{\rm W}$ has one hard edge and one soft edge; in the second case, $\alpha>0$ and $d\mu_{\rm W}$ has two soft edges, see also Figure~\ref{Fig_JUE}.

\begin{thm}[\textbf{Large deviation probabilities for the JUE with one hard edge and one soft edge}] \label{Thm_JUE LDP weak}
	Let $\alpha>-1$ and $\beta>0$ be fixed, and let 
    \begin{equation}
    b=\frac{4(1+\beta)}{(2+\beta)^{2}}.
    \end{equation} 
   (This is same as the definition of $b$ in \eqref{x pm lpp}, with $\gamma=1$ and $\delta=\beta$.)
	\begin{itemize}
		\item[(a)] \textup{\textbf{(Pulled regime)}}  If $d \in (b,1)$, then
		\begin{align}
			\mathbb{P} \Big( x_{ \rm max }^{ (n) }(\alpha,\beta n)  > d \Big) = \exp\bigg(U_1 n + U_2 \log n +U_3 +o(1) \bigg), \label{largedev-a N}
		\end{align}
        as $n \to \infty$. Here, $U_k$'s ($k=1,2,3$) are as in Theorem~\ref{th:lpp-ut} with $\gamma=1$ and 
\begin{equation} \label{choice of parameters for JUE LPP square}
\mathsf{n}=\alpha, \qquad \delta=\beta, \qquad q=\sqrt{1-d}. 
\end{equation} 
		\item[(b)] \textup{\textbf{(Pushed regime)}} If $d\in (0,b)$, then
		\begin{align}\label{lol4 N}
			\mathbb{P} \Big( x_{ \rm max }^{ (n) }(\alpha,\beta n) < d \Big) = \exp\bigg( \mathsf{L}_1 n^{2} + \mathsf{L}_2 n + \mathsf{L}_3 \log n + \mathsf{L}_4 + O \Big( \frac{\log n}{n } \Big)\bigg), 
		\end{align}
        as $n \to \infty.$
     Here, $\mathsf{L}_k$'s ($k=1,2,3,4$) are as in Theorem~\ref{Thm_LPP weak} with \eqref{choice of parameters for JUE LPP square}. 
	\end{itemize}
\end{thm}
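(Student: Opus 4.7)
Parts (a) and (b) call for distinct techniques, reflecting the different speeds ($n$ vs $n^{2}$) and mechanisms (single outlier vs collective rearrangement). In each case the plan is to invoke an existing random-matrix asymptotic result and then carry out the algebraic matching with the claimed closed-form constants.

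\textbf{Part (b) --- pushed regime.} Write
\[
\mathbb{P}(x_{\max}^{(n)}(\alpha,\beta n) < d) = \frac{Z_{n}^{[0,d]}(\alpha,\beta n)}{Z_{n}(\alpha,\beta n)},
\]
where $Z_{n}^{[0,d]}(\alpha,\beta n)$ denotes the integral defining the normalizing constant of \eqref{juepdf} with each $x_{j}$ restricted to $[0,d]$. The denominator is given in closed form by \eqref{def of JUE partition function}, and its asymptotic expansion to constant order follows directly from \eqref{Barnes G asymp}. For the numerator I invoke the main result of \cite{CG21}, which yields an expansion of the form
\[
\log Z_{n}^{[0,d]}(\alpha,\beta n) = E_{1}n^{2} + E_{2}n + E_{3}\log n + E_{4} + O\!\Big(\tfrac{\log n}{n}\Big),
\]
with coefficients expressible as logarithmic-energy integrals of the constrained equilibrium measure on $[0,d]$ (with a soft edge at $d$ and a hard edge at $0$), together with Barnes-$G$ contributions coming from the hard edge. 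Subtracting the two expansions gives the probability, and explicit evaluation of the equilibrium-measure integrals --- combined with the substitution $q=\sqrt{1-d}$ --- produces the coefficients $\mathsf{L}_{1},\dots,\mathsf{L}_{4}$ of Theorem~\ref{Thm_LPP weak}.

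\textbf{Part (a) --- pulled regime.} The tail probability is exponentially small and governed by a single outlying eigenvalue. Starting from the determinantal identity
\[
\mathbb{P}(x_{\max}^{(n)} > d) = 1-\det(I - K_{n}\mathbf{1}_{(d,1)}) = \int_{d}^{1}K_{n}(x,x)\,dx + O\!\big(e^{-2cn}\big)
\]
for some $c>0$, where $K_{n}$ is the Christoffel--Darboux kernel of the JUE, the problem reduces to the asymptotics of $K_{n}(x,x)$ for $x$ outside the support of the Wachter distribution \eqref{def of Wachter distribution}. The required asymptotics are supplied by \cite{Fo12} (see also \cite[Eq.~(14.136)]{Fo10}): for $x>b$,
\[
K_{n}(x,x) = A(x)\,e^{-2n\,\Phi(x)}\bigl(1+O(1/n)\bigr),
\]
where $\Phi$ is the effective potential determined by the equilibrium measure, and $A$ is an explicit prefactor carrying the $\alpha$-dependence. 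A standard Laplace analysis at the lower endpoint $x=d$ then gives
\[
\int_{d}^{1}K_{n}(x,x)\,dx = \frac{A(d)}{2n\,\Phi'(d)}\,e^{-2n\,\Phi(d)}\bigl(1+o(1)\bigr).
\]
The final step is to verify the algebraic identities $\Phi(d)=\Phi_{\gamma,\delta}(1-q^{2})$ under the parameters \eqref{choice of parameters for JUE LPP square}, and that the prefactor $A(d)/(2\Phi'(d))$ reproduces $U_{3}$ of Theorem~\ref{th:lpp-ut}, the $\alpha$-dependence of $A$ accounting for the term $-2\mathsf{n}\,\tfrac{d}{d\gamma}\Phi_{\gamma,\delta}(1-q^{2})$.

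\textbf{Main obstacle.} In both parts the random-matrix input is essentially off-the-shelf; the substantive work lies in the explicit algebraic identification of the asymptotic constants with the closed forms in Theorems~\ref{Thm_LPP weak} and \ref{th:lpp-ut}. Part (b) requires closed-form evaluation of several nontrivial integrals against the constrained Wachter density on $[0,d]$ (with a hard edge at $0$), while part (a) requires rewriting the effective potential coming from Jacobi polynomial asymptotics in the specific form \eqref{utexpan}, and tracking the subleading corrections to $K_{n}(x,x)$ responsible for the shift by $\mathsf{n}=\alpha$ in the constant term.
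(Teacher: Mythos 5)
Your overall strategy matches the paper's (partition-function ratio for (b); density outside the support via Lemma~\ref{le:juedens} and \cite{Fo12} for (a)), but there is a genuine error in part~(b) concerning the nature of the edges.

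In the pushed regime the constrained equilibrium measure on $[0,d]$, given in Proposition~\ref{Prop_constrained Wacther}~(b1) as
\[
d\mu_V(x)=\frac{\psi(x)}{\sqrt{x(d-x)}}\,dx,
\]
has inverse-square-root blowups at \emph{both} endpoints $0$ and $d$. Because the constraint at $d$ is active, the density is forced up against the wall and $d$ is a \emph{hard} edge, not a soft edge. You claim a soft edge at $d$; this matters, because the corresponding expansion from \cite{CG21} is different: the paper must use Theorem~\ref{thm:general a and b Jacobi-type} (two hard edges, with $\log n$ coefficient $-\tfrac14+\tfrac{\alpha^2}{2}$ and a Barnes factor $-\log G(1+\alpha)$ in the constant), not Theorem~\ref{thm:general a and b} (one soft, one hard, $\log n$ coefficient $-\tfrac16$). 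Applying the one-soft/one-hard formula as you propose would yield incorrect subleading and constant terms. Note this is precisely the distinction between Theorem~\ref{Thm_JUE LDP weak} (where $\lambda_1=\alpha$ is bounded, giving a hard edge at $0$) and Theorem~\ref{Thm_JUE LDP} (where $\lambda_1\asymp n$, pushing the left edge to $\mathfrak{m}>0$ and making it soft).

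In part~(a), the structure is correct, but the claimed mechanism for the $\alpha$-dependent shift in $U_3$ is off. The $-2\mathsf{n}\,\tfrac{d}{d\gamma}\Phi_{\gamma,\delta}(1-q^2)$ term does not come from the $\alpha$-dependence of the prefactor $A$; it comes from writing the fixed parameter $\alpha$ as $\alpha/n$ relative to the scaling of \cite{Fo12} and Taylor-expanding the exponent $-2n\Phi_{\alpha/n,\beta}(x)=-2n\Phi_{0,\beta}(x)-2\alpha\,\partial_\alpha\Phi_{0,\beta}(x)+O(1/n)$. The $O(1)$ contribution is produced by the first-order term of the exponent, while changes in the prefactor are $O(1/n)$ and only feed into the error.
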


\begin{thm}[\textbf{Large deviation probabilities for the JUE with two soft edges}] \label{Thm_JUE LDP}
	Let $\alpha,\beta>0$ and $t\in \R$ be fixed.  
    Let 
    \begin{equation}
     b =\Big( \frac{  \sqrt{(\alpha+1)(\alpha+\beta+1) } +\sqrt{\beta+1}   }{\alpha+\beta+2} \Big)^2. 
    \end{equation}  
    (This is same as the definition of $b$ in \eqref{x pm lpp}, with $\gamma=\alpha+1$ and $\delta=\beta$.)
	\begin{itemize}
		\item[(a)] \textup{\textbf{(Pulled regime)}}  If $d \in (b,1)$, then
		\begin{align} 
			\mathbb{P} \Big( x_{ \rm max }^{(n)}( \alpha n+t,\beta n ) > d \Big)  = \exp\bigg(U_1 n + U_2 \log n +U_3 +o(1) \bigg), \label{largedev-a}
		\end{align}
	   as $n \to \infty$. Here, $U_k$'s ($k=1,2,3$) are as in Theorem~\ref{th:lpp-ut} with \begin{equation} \label{choice of parameters for JUE LPP rectangle}
        \gamma=\alpha+1, \qquad \mathsf{n} =t, \qquad \delta= \beta, \qquad q=\sqrt{1-d}. 
        \end{equation} 
		\item[(b)] \textup{\textbf{(Pushed regime)}} If $d\in (0,b)$, then
		\begin{align}\label{lol4}
			\mathbb{P} \Big( x_{ \rm max }^{(n)}( \alpha n+t,\beta n ) < d \Big) = \exp\bigg( L_1 n^{2} + L_2 n + L_3 \log n + L_4 + O \Big( \frac{ \log n }{n } \Big)\bigg),  
		\end{align}
        as $n\to \infty.$
    Here, $L_k$'s ($k=1,2,3,4$) are as in Theorem~\ref{th:lpp-lt} with \eqref{choice of parameters for JUE LPP rectangle}. 
	\end{itemize}
\end{thm}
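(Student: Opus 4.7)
The plan is to prove Theorem \ref{Thm_JUE LDP} directly within the JUE framework, using the eigenvalue density asymptotics of Forrester \cite{Fo12} for the upper (pulled) tail and the hard-wall partition function expansion of Claeys-Girotti \cite{CG21} for the lower (pushed) tail. In both regimes the relevant equilibrium measure is the Wachter distribution \eqref{def of Wachter distribution} with $\gamma = \alpha + 1$, $\delta = \beta$, and the external field is $V(x) = -\alpha \log x - \beta \log(1-x)$. The remaining effort is to express the resulting integrals in the explicit closed form $(U_1,U_2,U_3)$ and $(L_1,L_2,L_3,L_4)$.

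\textbf{Part (a), pulled regime ($d>b$).} Since $d$ lies strictly outside the support $[a,b]$, the event $\{x_{\max}^{(n)}>d\}$ is controlled by a single-eigenvalue excursion:
\begin{equation*}
\mathbb{P}(x_{\max}^{(n)}(\alpha n + t, \beta n) > d) = \int_d^1 \rho_1^{(n)}(x)\, dx \cdot (1 + O(e^{-cn}))
\end{equation*}
for some $c>0$, with $\rho_1^{(n)}(x)=K_n(x,x)$ the JUE one-point function. Inserting the asymptotic formula of \cite{Fo12} (see also \cite[Eq.~(14.136)]{Fo10}) for $\rho_1^{(n)}(x)$ with $x$ to the right of the droplet yields a decay rate $\exp(-2n\Phi_{\gamma,\delta}(x))$, where $\Phi_{\gamma,\delta}(x)$ is identified as half the effective potential outside the support, with branch choices pinned by vanishing at $x=b$ (this matches \eqref{utexpan}). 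Laplace's method near $x=d$ then produces the prefactor $U_2=-1$ and the explicit constant $U_3$, while the shift $t$ in $\lambda_1$ contributes the term $-2t\,\frac{d}{d\gamma}\Phi_{\gamma,\delta}(1-q^2)$ through differentiation of the weight under the parameter translation $q=\sqrt{1-d}$.

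\textbf{Part (b), pushed regime ($d<b$).} We start from the ratio
\begin{equation*}
\mathbb{P}(x_{\max}^{(n)}(\alpha n + t, \beta n) < d) = \frac{Z_n^{(d)}(\alpha n + t, \beta n)}{Z_n(\alpha n + t, \beta n)},
\end{equation*}
where $Z_n^{(d)}$ is the partition function \eqref{juepdf} with $(0,1)^n$ replaced by $(0,d)^n$. The denominator is evaluated exactly via \eqref{def of JUE partition function} and expanded by \eqref{Barnes G asymp}. For the numerator, we apply the five-term expansion of \cite{CG21}, whose coefficients are integrals against the constrained equilibrium measure $\mu^{(d)}$. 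This measure is supported on $[\mathfrak{m}, d]$ with $\mathfrak{m}\in(0,a)$ determined by the one-cut variational condition, which after the translation $\gamma=\alpha+1$, $\delta=\beta$, $q=\sqrt{1-d}$ is precisely \eqref{a eq 4}. Taking the ratio and collecting terms of each order in $n$ produces $L_1 n^2 + L_2 n + L_3 \log n + L_4$.

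\textbf{Main obstacle.} The substantive difficulty is not the input asymptotics but the closed-form evaluation of the residual integrals against $\mu^{(d)}$: the logarithmic energy $\iint \log|x-y|^{-1}\, d\mu^{(d)}(x)\, d\mu^{(d)}(y)$, the moment $\int V(x)\, d\mu^{(d)}(x)$, and the Lagrange multiplier. The approach is to parametrize $\mu^{(d)}$ by an explicit square-root density on $[\mathfrak{m}, d]$ whose resolvent can be written as an algebraic function; the rational form of $V$ then reduces every integral to residues at $x\in\{0,1,\infty\}$ and at the branch points, producing closed expressions in $\mathfrak{m}, \gamma, \delta, q$. A nontrivial internal check is furnished by Remark \ref{remark:no critical transition} together with \eqref{asymptotics of mfrak}: the $\gamma\to 1$ limit of the resulting $L_j$ must collapse onto the one-hard-edge formulas $\mathsf{L}_j$ of Theorem \ref{Thm_JUE LDP weak}(b), and verifying this identity analytically is a useful consistency test of the algebra.
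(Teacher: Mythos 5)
Your proposal takes essentially the same route as the paper: part (a) via Lemma~\ref{le:juedens} combined with the Forrester density asymptotics \cite{Fo12}, a Laplace-method evaluation at the endpoint $d$, and a Taylor expansion in $\alpha$ to absorb the $O(1/n)$-shift $t$ into $\lambda_1$; part (b) by writing the probability as a ratio of a constrained Selberg-type partition function over $[0,d]^n$ to the exact JUE normalization \eqref{def of JUE partition function}, applying the one-soft/one-hard-edge expansion of \cite{CG21} (Theorem~\ref{thm:general a and b}) to the numerator and Barnes-$G$ asymptotics to the denominator, and then evaluating the residual integrals by contour deformation and residues (Lemmas~\ref{lemma:some identities 1}--\ref{lemma:some identities 2}). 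The only small point you should also flag is that the extended potential $V$ in \eqref{def of V} is discontinuous at $x=0$ and so does not literally satisfy the hypotheses of Theorem~\ref{thm:general a and b}, but since $V$ is analytic in a neighbourhood of $[\mathfrak{m},d]$ the proof of \cite[Theorem 1.2]{CG21} still goes through, as the paper notes.
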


\begin{rem}(Consistency check between Theorem~\ref{Thm_JUE LDP} (b) and \cite{RKC12}) 
By \cite[Subsection 4.3]{RKC12}, as $n\to+\infty$ we have
\begin{equation}
\mathbb{P} \Big( x_{ \rm max }^{(n)}( \alpha n+t,\beta n ) < d \Big) =	\exp \Big( -n^2 ( S( \mathfrak{m} , d )-S( a,  b ) ) +o(n^2)\Big),
\end{equation}
where $a$ and $b$ are given by \eqref{x pm lpp} with the substitution \eqref{choice of parameters for JUE LPP rectangle}, and 
\begin{align}
	\begin{split} \label{def of S(x,y)}
		S(x,y) &=- \log\frac{y-x}{4}  + \alpha \beta \log \frac{ \sqrt{x(1-y)}+\sqrt{y(1-x)} }{ 2}+ \frac{\alpha^2}{4}\log(xy) + \frac{\beta^2}{4}\log\Big((1-x)(1-y) \Big)   
        \\
        &\quad - ( 2+\alpha+\beta ) \Big( \alpha \log \frac{\sqrt{x}+\sqrt{y}}{ 2 } +  \beta \log \frac{\sqrt{1-x}+\sqrt{1-y}}{ 2 }  \Big). 
	\end{split}
\end{align}
It is straightforward to check this formula is consistent with \eqref{lol4}, namely 
\begin{equation}
L_1 = -( S( \mathfrak{m} , d )-S( a,  b ) ), 
\end{equation}
see also \eqref{widehat D1 evaluation} below. 
\end{rem}

\subsection{Related works}

\subsubsection{CUE averages}

A well-known duality formula expresses the largest eigenvalue distribution of the JUE in terms of certain CUE averages (as studied in \cite{FW01, FW02, FW04, BR01}), see also Lemmas \ref{lem:cuetojue} and \ref{lem:cuejue-duality} below. Consequently, the right-hand side of \eqref{lppjue} can be written in terms of these CUE averages, see also \cite{Fo25}. This connection will be further discussed in Subsection~\ref{Subsec_JUE CUE}.

\subsubsection{Free energy expansion of two-dimensional Coulomb gas}

Theorems~\ref{Thm_TUE moment weak} and~\ref{Thm_TUE moment strong} admit a reformulation as precise free energy expansion of certain two-dimensional Coulomb gases. Indeed, by \eqref{def of tUE}, the partition function
\begin{equation}
Z_{N}:=  \int_{ \mathbb{C}^N } \prod_{1\le j<k \le N} |\lambda_j-\lambda_k|^2 \prod_{j=1}^N (1-|\lambda_j|^2)^{M-N-1}  |\lambda_j-z|^{2cN}  \mathbbm{1}_{ \{ |\lambda_j| \le 1 \} } \,dA(\lambda_j), \qquad |z|<1,
\end{equation}
can be rewritten as
\begin{equation}
Z_{N} = Z_{N,M}^{ \rm trunc }\, \mathbb{E}  | \det (  \TT_{N,M} -z ) |^{2cN}  . 
\end{equation}
Therefore, Theorems~\ref{Thm_TUE moment weak} and~\ref{Thm_TUE moment strong} directly yield an expansion of the free energy $\log Z_N$.

Recent advances in free energy expansion for general temperature include \cite{LS17, BBNY19}. The best known error estimates were obtained in \cite{Se23} for general equilibrium measures, and in \cite{AS21} when the equilibrium measure is uniform. An important aspect of this expansion is that it captures the underlying geometric structure of the Coulomb gas \cite{JMP94, TF99, ZW06, CFTW15}. In the determinantal case, more precise results are available \cite{Ch22,Ch23,BP24,AFLS25, BSY24, BKS23, BKSY25, ACC23, ACCL22, AL2025, Noda2025, Roug2025,Byun25}. In particular, the work \cite{BSY24} treats a situation where the potential is non-radially symmetric and the equilibrium measure is uniform. Our results provide the first example of a precise free energy expansion in a situation where the potential is \textit{non-radially symmetric} and the limiting density is \textit{non-uniform}. 

\subsubsection{Moments of the characteristic polynomial of induced spherical ensembles}

In addition to the TUE model, there exists an equivalent formulation within non-Hermitian random matrix theory known as the \emph{induced spherical ensemble} \cite[Section 2.5]{BF25}. From a plasma perspective, the TUE can be viewed as an extension of the Ginibre ensemble to a space of constant negative curvature called the pseudosphere, see \cite[Section 15.7]{Fo10}. In contrast, the induced spherical ensemble corresponds to an extension of the Ginibre ensemble on a surface of constant positive curvature, namely, the Riemann sphere.

In \cite[Proposition 5.4]{Fo25}, it was shown that the moments of the characteristic polynomial of the induced spherical ensemble also exhibit a duality with the large deviation probabilities of the JUE. Hence, this seemingly distinct model turns out to be equivalent to the problem studied here. From the two-dimensional Coulomb gas perspective, this corresponds to a gas of equally charged particles on the Riemann sphere interacting with two external point charges. The associated equilibrium measure and leading-order energy---equivalent to the dominant term in Theorem~\ref{Thm_TUE moment strong}---were recently obtained in \cite{BFL25}. The corresponding planar orthogonal polynomials were analysed in \cite{BFKL25} using the Riemann--Hilbert method.

\section{Duality relations} \label{Section_dualities}

In this section we prove Theorems~\ref{th:trunc-lpp} and ~\ref{th:jue-lpp}. We start from the expression \eqref{gmnsum} below that arises directly after applying the Robinson–Schensted–Knuth (RSK) correspondence to the LPP problem. This is the expression Johansson used in \cite{Joh00} to write the distribution of the last passage time in terms of the Meixner ensemble, as we shall discuss below. Here we show how manipulating this expression in an alternative way leads to the JUE and to the TUE.

\subsection{Duality formulas relating LPP, TUE and JUE}
We begin with an introduction to partitions and Schur functions, see \cite{Macdonald} for a standard reference. A partition $\lambda$ is a weakly decreasing sequence of non-negative integers $\lambda = ( \lambda_1, \lambda_2,\ldots, \lambda_m)$. The number of non-zero terms in such a sequence is called the length of the partition and is denoted as $l(\lambda)$. The weight of the partition is $|\lambda| = \sum_{i=1}^{m} \lambda_i$. The Schur functions $s_{\lambda}(\bm x)$ in $m$ variables $x_{1},\ldots,x_{m}$ are symmetric polynomials defined as
\begin{equation}
s_{\lambda}(\bm x) = \frac{1}{\Delta(\bm x)} \det\Big\{x_{i}^{\lambda_{j}+m-j}\Big\}_{i,j=1}^{m}, \qquad \Delta(\bm x) = \prod_{1 \leq i < j \leq m}(x_{i}-x_{j}). \label{sdef}
\end{equation}
Note that $\Delta(\bm x)$ differs from the classical definition of a Vandermonde determinant by a factor $(-1)^{\frac{m(m-1)}{2}}$.
If a partition $\lambda$ has length $l(\lambda) > m$ then $s_{\lambda}(x_{1},\ldots,x_{m})=0$. If $l(\lambda) < m$ then \eqref{sdef} continues to be a valid definition by appending an appropriate number of zeros to the end of the partition. The Schur functions are homogeneous in the sense that if $q$ is a scalar, then $s_{\lambda}(q \bm x) = q^{|\lambda|}s_{\lambda}(\bm x)$. Partitions are represented by their Young diagram consisting of $m$ rows of boxes, with each row starting on the left with $\lambda_{j}$ boxes in row $j$, for $j=1,\ldots, m$. The total number of boxes in the diagram is the weight $|\lambda|$. The conjugate of $\lambda$, denoted $\lambda'$, is the partition obtained by transposing the Young diagram, that is row $j$ of $\lambda'$ is built from column $j$ of $\lambda$ and vice versa. For vectors $\bm x \in \mathbb{C}^{m}$ and $\bm y \in \mathbb{C}^{\ell}$, the dual Cauchy identity states that (see e.g. \cite[Chapter 1, Eq. (4.3$^\prime$)]{Macdonald} or \cite[Exercise 10.1.4]{Fo10})
\begin{align} 
\prod_{j=1}^{m}\prod_{i=1}^{\ell}(1+x_{j}y_{i}) = \sum_{\lambda, l(\lambda) \leq m, \lambda_{1} \leq \ell} s_{\lambda}(\bm x) s_{\lambda'}(\bm y). \label{dci}
\end{align}
Note the appearance of the conjugate partition $\lambda'$ on the right-hand side which has length at most $\ell$ and first coordinate at most $m$. We use the notation $s_{\lambda}(1_{m}) = s_{\lambda}(1,\ldots,1)$, i.e. $1_{m}$ has all $m$ coordinates equal to $1$. The following theorem presents the known expressions for the probability distribution of $G_{n,m}$ due to \cite{Joh00,BR01,FW04}.
\begin{thm}
\label{thm:meix}
Assume $n \geq m$ and that $n,m,\ell \in \N$. Suppose $q\in (0,1)$, and let $G_{n,m}$ be the last passage time in geometric environment with parameter $q^{2}$. The following formulas express $\mathbb{P}(G_{n,m} \leq \ell)$ in different ways:
\begin{itemize}
\item A Schur function expansion \cite[Eqs. (2.2)--(2.5)]{Joh00}
\begin{equation}
\mathbb{P}(G_{n,m} \leq \ell) = (1-q^{2})^{nm}\sum_{\lambda, l(\lambda) \leq m, \lambda_{1} \leq \ell}s_{\lambda}(1_{n})s_{\lambda}(1_{m})q^{2|\lambda|}. \label{gmnsum}
\end{equation}
\item The largest particle distribution in the Meixner ensemble \cite[Proof of Proposition 1.3]{Joh00}
\begin{equation}
\mathbb{P}(G_{n,m} \leq \ell)  = \mathbb{P}(h^{(m)}_{\mathrm{max}}(n,q) \leq \ell+m-1). \label{lpmeix}
\end{equation}  
\item An average over Haar distributed unitary matrices $U$ of size $\ell \times \ell$ \cite[Eq.(5.53)]{FW04} (see also \cite{BR01})
\begin{equation}
 \mathbb{P}(G_{n,m} \leq \ell) = (1-q^{2})^{nm}\mathbb{E}(\det(1+U)^{n}\det(1+q^{2}U^{\dagger})^{m}). \label{cuebr01}
\end{equation}
\end{itemize}
\end{thm}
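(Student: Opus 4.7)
The plan is to derive all three formulas from a common starting point, namely the Robinson--Schensted--Knuth (RSK) correspondence applied to the matrix $\omega=(\omega_{i,j})$ of geometric weights. First, I would establish \eqref{gmnsum}. From \eqref{weights} we have
$$\mathbb{P}(G_{n,m}\leq \ell)=(1-q^{2})^{nm}\sum_{\omega:\,G_{n,m}(\omega)\leq \ell}q^{2\sum_{i,j}\omega_{i,j}},$$
where the sum runs over $n\times m$ matrices with entries in $\mathbb{N}$. The RSK correspondence provides a weight-preserving bijection between such matrices and pairs $(P,Q)$ of semistandard Young tableaux of common shape $\lambda$, where $P$ has entries in $\{1,\ldots,m\}$, $Q$ has entries in $\{1,\ldots,n\}$, the total weight is $|\lambda|=\sum_{i,j}\omega_{i,j}$, and (by Greene's theorem) the maximum $G_{n,m}(\omega)$ equals $\lambda_{1}$. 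Since the number of semistandard Young tableaux of shape $\lambda$ with entries in $\{1,\ldots,k\}$ is $s_{\lambda}(1_{k})$, replacing the sum over $\omega$ by a double sum over $(P,Q)$ yields \eqref{gmnsum} with $l(\lambda)\leq \min(n,m)=m$ automatic.

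Next, for \eqref{cuebr01}, I would expand both determinants using the dual Cauchy identity \eqref{dci}. Writing the eigenvalues of $U$ as $e^{i\theta_{1}},\ldots,e^{i\theta_{\ell}}$ and taking $\bm x=1_{n}$, $\bm y=(e^{i\theta_{1}},\ldots,e^{i\theta_{\ell}})$ in \eqref{dci} gives
$$\det(I+U)^{n}=\sum_{\lambda:\,l(\lambda)\leq n,\,\lambda_{1}\leq \ell} s_{\lambda}(1_{n})\,s_{\lambda'}(e^{i\theta_{1}},\ldots,e^{i\theta_{\ell}}),$$
and an analogous expansion of $\det(I+q^{2}U^{\dagger})^{m}$, together with the homogeneity $s_{\mu}(q^{2}\cdot 1_{m})=q^{2|\mu|}s_{\mu}(1_{m})$, produces a corresponding sum over $\mu$ with the factor $q^{2|\mu|}$. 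Taking the Haar expectation and using the Schur orthogonality $\mathbb{E}[s_{\lambda'}(U)\overline{s_{\mu'}(U)}]=\delta_{\lambda,\mu}$ (valid because $l(\lambda'),l(\mu')\leq \ell$ by the constraints $\lambda_{1},\mu_{1}\leq\ell$) collapses the double sum to $\sum_{\lambda} s_{\lambda}(1_{n})s_{\lambda}(1_{m})q^{2|\lambda|}$, and comparison with \eqref{gmnsum} yields \eqref{cuebr01}.

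Finally, for \eqref{lpmeix}, I would rewrite the Schur sum in \eqref{gmnsum} via the change of variables $h_{i}=\lambda_{i}+m-i$, $i=1,\ldots,m$, which maps partitions $\lambda$ with $l(\lambda)\leq m$ and $\lambda_{1}\leq \ell$ bijectively to strictly decreasing tuples $h=(h_{1},\ldots,h_{m})$ with $h_{m}\geq 0$ and $h_{1}\leq \ell+m-1$. The Weyl dimension formula $s_{\lambda}(1_{k})=\prod_{1\leq i<j\leq k}(\lambda_{i}-\lambda_{j}+j-i)/(j-i)$ (with zeros appended when $l(\lambda)<k$) gives $s_{\lambda}(1_{m})\propto \Delta(h)$ and, after splitting the product for $s_\lambda(1_n)$ according to whether the indices are $\leq m$ or $>m$,
$$s_{\lambda}(1_{n})=s_{\lambda}(1_{m})\,\prod_{i=1}^{m}\frac{(h_{i}+n-m)!\,(m-i)!}{h_{i}!\,(n-i)!}\propto \Delta(h)\prod_{i=1}^{m}\binom{h_{i}+n-m}{h_{i}},$$
where the proportionality constants are independent of $\lambda$. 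Combined with $q^{2|\lambda|}=q^{-m(m-1)}\prod_{i}q^{2h_{i}}$ and the symmetrisation of the ordered sum over $h$ into an unordered one (permissible because $\Delta(h)^{2}$ vanishes when two $h_{i}$ coincide), the resulting expression is, up to an overall $\lambda$-independent normalising constant, the probability density \eqref{meixpdf} restricted to $\{h_{\max}\leq \ell+m-1\}$, which yields \eqref{lpmeix}.

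The main obstacle is the bookkeeping of prefactors in the derivation of \eqref{lpmeix}: the products of factorials from the Weyl formula, the Vandermonde constants, the symmetrisation factor $1/m!$, and the power of $q$ arising from $|\lambda|-\sum_{i}h_{i}=-m(m-1)/2$ must all conspire to reproduce the correct Meixner normalising constant. The other two derivations reduce to standard Schur-function manipulations; the critical combinatorial input in all three cases is Greene's theorem, which identifies $G_{n,m}$ with $\lambda_{1}$ under RSK.
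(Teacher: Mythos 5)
Your proposal is correct and follows essentially the same route as the paper: RSK with Greene's theorem for \eqref{gmnsum}, the change of variables $h_{j}=\lambda_{j}+m-j$ combined with the Weyl dimension formula for \eqref{lpmeix}, and the dual Cauchy identity together with Schur orthogonality over the CUE for \eqref{cuebr01}. The bookkeeping you flag as the main obstacle for \eqref{lpmeix} indeed works out, as your factorial identity $s_{\lambda}(1_{n})/s_{\lambda}(1_{m})=\prod_{i=1}^{m}\frac{(h_{i}+n-m)!\,(m-i)!}{h_{i}!\,(n-i)!}$ matches the paper's version after reindexing $i\mapsto m+1-j$.
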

We do not give the full proof of the above theorem here but we give some remarks. The Schur function expansion in \eqref{gmnsum} is a consequence of applying the RSK correspondence to geometric LPP \cite[Eq. (2.4)]{Joh00}. To derive \eqref{lpmeix}, Johansson rewrites \eqref{gmnsum} using the identity
\begin{equation}
s_{\lambda}(1_{n}) = \prod_{1 \leq i < j \leq n}\frac{\lambda_{i}-\lambda_{j}+j-i}{j-i}
\end{equation}
and introduces new variables $h_{j} = \lambda_{j}+m-j$ for $j=1,\ldots,m$. Assume that $n \geq m$ (the other case is analogous by symmetry).
This leads to
\begin{equation}
\begin{split}
s_{\lambda}(1_{m})s_{\lambda}(1_{n}) & = s_{\lambda}(1_{m})^{2}\prod_{j=1}^{m}\frac{(\lambda_{j}+n-j)!}{(\lambda_{j}+m-j)!}\,\frac{(j-1)!}{(n-m+j-1)!}, \qquad \mbox{if } l(\lambda) \leq m,
\\
&=\prod_{1 \leq i < j \leq m}(h_{j}-h_{i})^{2}\prod_{j=1}^{m}\frac{(h_{j}+n-m)!}{h_{j}!}\,\frac{1}{(j-1)!(n-m+j-1)!}. \label{meixnerid}
\end{split}
\end{equation} 
The sum over partitions becomes a sum over the set of $h_{j}$'s and this establishes \eqref{lpmeix}. 

Formula \eqref{cuebr01} can be proven quickly in the following way. Consider a general correlator of characteristic polynomials
\begin{equation}
\mathbb{E}\bigg(\prod_{j=1}^{n}\det(1+Uz_{j})\det(1+U^{\dagger}w_{j})\bigg), \label{cue-correlator}
\end{equation}
where $U$ is an $\ell \times \ell$ Haar distributed unitary matrix. By the dual Cauchy identity \eqref{dci}, we can expand \eqref{cue-correlator} as
\begin{equation}
\sum_{\lambda, l(\lambda) \leq n, \lambda_{1} \leq \ell}\sum_{\mu, l(\mu) \leq n, \mu_{1} \leq \ell}s_{\lambda}(\bm z)s_{\mu}(\bm w)\mathbb{E}(s_{\lambda'}(U)s_{\mu'}(U^{\dagger})) = \sum_{\lambda, l(\lambda) \leq n, \lambda_{1} \leq \ell}s_{\lambda}(\bm z)s_{\lambda}(\bm w)
\end{equation}
where we used orthogonality of Schur functions $\mathbb{E}(s_{\lambda'}(U)s_{\mu'}(U^{\dagger})) = \delta_{\lambda \mu}$ (see e.g. \cite[Eq. (10.21)]{Fo10}). Now choose $w_{j}=0$ for $j > m$. This restricts the length of the partitions in the sum to $m$. We also choose $w_{j} = q^{2}$ for $j\leq m$ and $z_{j}=1$ for $j=1,\ldots,n$. Then 
\begin{equation}
\mathbb{E} (\det(1+U)^{n}\det(1+q^{2}U^{\dagger})^{m}) = \sum_{\lambda, l(\lambda) \leq m, \lambda_{1} \leq \ell}s_{\lambda}(1_{n})s_{\lambda}(1_{m})q^{2|\lambda|}
\end{equation}
and we arrived again at the Schur function expansion in \eqref{gmnsum}. This shows \eqref{cuebr01}. The relation between averages of unitary matrices and last passage percolation was extensively discussed in \cite[Eq.(5.53)]{FW04}.

\medskip 

While Theorem \ref{thm:meix} contained the known exact relations between the last passage time and random matrix theory, below we present two further relations we believe to be new. To present them, recall that the Jacobi ensemble $\mathrm{JUE}_m(\alpha,\beta)$ is the joint probability density function of $m$ points $s_{1},\ldots,s_{m}$ on $[0,1]$ of the form
\begin{equation}
\frac{1}{  Z_m(\alpha,\beta) } \prod_{j=1}^{m}s_{j}^{ \alpha } (1-s_{j})^{ \beta } \Delta(\bm s)^{2}, \label{jensemble}
\end{equation}
where $\alpha,\beta>-1$ and the normalization constant is
\begin{equation}
Z_m(\alpha,\beta)  = \int_{[0,1]^{m}}\prod_{j=1}^{m}ds_{j}\,s_{j}^{ \alpha } (1-s_{j})^{ \beta } \Delta(\bm s)^{2}. 
\end{equation} 
We denote the largest point $x^{ (m) }_{\mathrm{max}}(\alpha,\beta) = \mathrm{max}(s_{1},\ldots,s_{m})$. By definition we have
\begin{equation} \label{defeigjue}
\mathbb{P}(  x^{ (m) }_{\mathrm{max}}(\alpha,\beta) \leq x) = 
 \frac{1}{  Z_m(\alpha,\beta) } 
 \int_{[0,x]^{m}}\prod_{j=1}^{m}ds_{j}\,s_{j}^{ \alpha } (1-s_{j})^{ \beta } \Delta(\bm s)^{2}. 
\end{equation}

We now prove Theorems~\ref{th:trunc-lpp} and ~\ref{th:jue-lpp}. 

\begin{proof}[Proof of Theorems~\ref{th:trunc-lpp} and ~\ref{th:jue-lpp}] We first show \eqref{lppjue}. 
We will use \eqref{gmnsum} and an identity obtained in \cite[Eq. (28)]{FK07} 
\begin{equation}\label{lol10}
s_{\lambda}(1_{m})s_{\lambda}(1_{n}) = \frac{s_{\lambda'}(1_{\ell })}{k_{n,m}^{\ell}}\int_{[0,\infty)^{m}}\prod_{j=1}^{m}dx_{j}\,\frac{x_{j}^{n-m}}{(1+x_{j})^{\ell +n+m}}s_{\lambda}(\bm x) \Delta(\bm x)^{2},
\end{equation}
where $\lambda'$ is the conjugate partition, and where (see \cite[Eqs. (87)--(88)]{FK07})
\begin{align*}
k_{n,m}^{\ell} = \int_{[0,\infty)^{m}}\prod_{j=1}^{m}dx_{j}\,\frac{x_{j}^{n-m}}{(1+x_{j})^{\ell +n+m}} \Delta(\bm x) ^{2} = Z^{(m)}(n-m,\ell). 
\end{align*}
Substituting \eqref{lol10} in \eqref{gmnsum}, and then simplifying using the dual Cauchy identity \eqref{dci}, we obtain
\begin{equation}
\mathbb{P}(G_{n,m} \leq \ell) = \frac{(1-q^{2})^{nm}}{k_{n,m}^{\ell}}\int_{[0,\infty)^{m}}\prod_{j=1}^{m}dx_{j}\,(1+q^{2}x_{j})^{\ell }\frac{x_{j}^{n-m}}{(1+x_{j})^{\ell+n+m}} \Delta(\bm x)^{2} .
\end{equation}
Now for each $j=1,\ldots,m$, we make the change of variables $x_{j} = \frac{s_{j}}{1-s_{j}}$ so that $dx_{j} = \frac{1}{(1-s_{j})^{2}}\,ds_{j}$ and $\frac{1}{1+x_{j}} = 1-s_{j}$. We also have
\begin{equation}
 \Delta(\bm x) ^{2}\prod_{j=1}^{m}dx_{j} = \Delta(\bm s)^{2}\prod_{j=1}^{m}\frac{ds_{j}}{(1-s_{j})^{2m}}.
\end{equation}
Therefore
\begin{equation}
\mathbb{P}(G_{n,m} \leq \ell) = \frac{(1-q^{2})^{nm}}{k_{n,m}^{\ell }}\int_{[0,1]^{m}}\prod_{j=1}^{m}ds_{j}\,s_{j}^{n-m}(1-s_{j}(1-q^{2}))^{\ell} \Delta(\bm s)^{2}. 
\end{equation}
Changing variables $s_{j} \to s_{j}/(1-q^{2})$ for all $j=1,\ldots,m$ cancels the prefactor and gives \eqref{defeigjue} with $x=1-q^{2}$, $\alpha=n-m$ and $\beta = \ell$. This completes the proof of \eqref{lppjue}.

Next, we show \eqref{momtrunc}. By \cite[Eq. (2.25)]{SS23}, we have
\begin{equation}
\begin{split}
\mathbb{E}(|\det(\mathsf{T}_{ \ell,\ell+n-m  }-q)|^{2m}) &= \prod_{j=1}^{m}\frac{(\ell+j-1)!}{(\ell+n-m+j-1)!}\sum_{\lambda, l(\lambda) \leq m, \lambda_{1} \leq \ell}s_{\lambda}(1_{m})^{2}q^{2|\lambda|}\prod_{j=1}^{m}\frac{(\lambda_{j}+n-j)!}{(\lambda_{j}+m-j)!}\\
&=\prod_{j=1}^{m}\frac{(n-m+j-1)!(\ell+j-1)!}{(\ell+n-m+j-1)!(j-1)!}\sum_{\lambda, l(\lambda) \leq m, \lambda_{1} \leq \ell}s_{\lambda}(1_{m})s_{\lambda}(1_{n})q^{2|\lambda|},
\end{split}
\end{equation}
where we used the first identity in \eqref{meixnerid}. The result now follows from \eqref{gmnsum}.
\end{proof}
We remark that Theorem \ref{th:jue-lpp} also implies the equivalence of the right-hand sides in \eqref{lppjue} and \eqref{momtrunc}, i.e.
\begin{equation}
\mathbb{E}(|\det(\mathsf{T}_{ \ell,\ell+n-m  }-q)|^{2m}) = \frac{1}{c_{\ell,n,m}}(1-q^{2})^{-nm} \mathbb{P}(x^{(m)}_{\mathrm{max}}(n-m,\ell) \leq 1-q^2).
\end{equation}
This identity first appeared in \cite[Theorem 1.10]{DS22}.

\subsection{Dualities of JUE and CUE averages} \label{Subsec_JUE CUE}
There are several duality relations between JUE and CUE averages that are extensively discussed and used in the series of works \cite{FW01,FW02,FW04}. We provide some of them for consistency checking with Theorem~\ref{th:jue-lpp}. 
For a partition $\lambda$ we introduce the hypergeometric coefficient
\begin{equation}
[u]_{\lambda} = \prod_{j=1}^{l(\lambda)}\frac{\Gamma(\lambda_{j}+u-j+1)}{\Gamma(u-j+1)}.
\end{equation}
These coefficients obey a transposition property  (see e.g. \cite[Exercise 12.4.2 with $\alpha=1$]{Fo10})
\begin{equation}
[-u]_{\lambda} = (-1)^{|\lambda|}[u]_{\lambda'}. \label{transposition_property}
\end{equation}
It will be useful to recall a particular case of Selberg integral (see e.g. \cite[Chapter 4]{Fo10} and \cite{FW08})
\begin{equation}
\begin{split}
S_{N}( \alpha,\beta) &:= \int_{[0,1]^{N}}\prod_{j=1}^{N}ds_{j}\,s_{j}^{ \alpha }(1-s_{j})^{ \beta }\Delta(\bm s)^{2} = \prod_{j=0}^{N-1}\frac{\Gamma( \alpha +j+1)\Gamma( \beta +j+1)\Gamma(j+2)}{\Gamma(\alpha+\beta+N+j+1)}, \label{selberg}
\end{split}
\end{equation}
valid for $\re  \alpha  > -1, \re  \beta >-1$. We also recall a trigonometric version of this integral which is often referred to as the Morris integral (see e.g. \cite[Eqs. (1.17)--(1.18) and page 496]{FW08}):
\begin{equation}
\begin{split}
M_{N}(\alpha,\beta) &:= \int_{\mathcal{C}^{N}}\prod_{j=1}^{N}  \frac{dz_{j}}{i z_{j}}  \,z_{j}^{\frac{ \alpha - \beta }{2}}|1+z_{j}|^{ \alpha + \beta }|\Delta(\bm z)|^{2} = (2\pi)^{N}\prod_{j=0}^{N-1}\frac{\Gamma( \alpha + \beta +j+1)\Gamma(j+2)}{\Gamma( \alpha +j+1)\Gamma(\beta+j+1)}, \label{morris}
\end{split}
\end{equation}
valid for $\mathrm{Re}( \alpha + \beta )>-1$, with $\mathcal{C}$ denoting the unit circle oriented positively.  The works \cite{K93,K97} independently obtained a generalization of \eqref{selberg} where the integrand is weighted by a Schur function $s_{\lambda}(\bm s)$. The trigonometric or Morris version of this Schur function average was obtained by Forrester \cite{Fo94}. Finally, a spherical version of the Schur polynomial averages was obtained in \cite{FK07} and generalized further in \cite{FS09}. We summarise these results in the following.
\begin{lem}
Let $\lambda$ be a partition. 
\begin{itemize}
    \item Morris type \cite[Eq. above (2.46)]{FW08}: for any $ \alpha , \beta $ such that $\mathrm{Re}(\alpha+\beta)>-1$, we have
\begin{equation}
\int_{\mathcal{C}^{N}}\prod_{j=1}^{N}  \frac{dz_{j}}{i z_{j}} \,z_{j}^{\frac{\alpha-\beta}{2}}|1+z_{j}|^{ \alpha + \beta } s_{\lambda}(\bm z)|\Delta(\bm z)|^{2} = M_{N}(\alpha,\beta)s_{\lambda}(1_N)\frac{[-\beta]_{\lambda}}{[\alpha+N]_{ \lambda }} (-1)^{|\lambda|}. \label{cue-kaneko}
\end{equation}
\item Spherical type \cite[Eq. (28)]{FK07} and \cite[Lemma 2.4]{FS09}:  for any $\alpha,\beta,\lambda$ such that $\re \alpha >-1$, $\re \beta >-1$, $l(\lambda) \leq m$, $l(\lambda')< 1+\re \beta$, 
\begin{equation}
\int_{[0,\infty)^{m}}\prod_{j=1}^{m}dx_{j}\,\frac{x_{j}^{\alpha}}{(1+x_{j})^{\alpha+\beta+2m}}s_{\lambda}(\bm x)\Delta(\bm x)^{2} = S_{m}(\alpha,\beta)s_{\lambda}(1_{m})\frac{[\alpha+m]_{\lambda}}{[-\beta]_{\lambda}}(-1)^{|\lambda|}. \label{spherical-kaneko}
\end{equation}
\item Jacobi type \cite[Eq. (2.46)]{FW08}: for any $\alpha,\beta$ such that $\mathrm{Re}\,\alpha>-1$ and $\mathrm{Re}\,\beta>-1$, we have
\begin{equation}
\int_{[0,1]^{m}}\prod_{j=1}^{m}ds_{j}\,s_{j}^{ \alpha }(1-s_{j})^{ \beta }s_{\lambda}(\bm s)\Delta(\bm s)^{2} = S_{m}(\alpha,\beta)s_{\lambda}(1_m)\frac{[\alpha+m]_{\lambda}}{[\alpha+\beta+2m]_{\lambda}}. \label{jac-kaneko}
\end{equation}
\end{itemize}
\end{lem}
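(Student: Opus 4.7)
The three identities are Schur-function weighted generalizations of the Morris (unit circle), Marchenko--Pastur (half-line), and Selberg (interval) integrals, and are established in the cited works \cite{FW08, FK07, FS09}. They can also be extracted from the Kaneko--Kadell theory of Jack-polynomial Selberg integrals by specializing the Jack parameter to the Schur value $\alpha_{\mathrm{Jack}}=1$. For a self-contained derivation, the plan is to proceed via a unified strategy based on Andreief's identity (also called Heine's identity or the generalized Cauchy--Binet identity) applied to the determinantal expression \eqref{sdef} for $s_\lambda$.

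For the Jacobi identity \eqref{jac-kaneko}, I would write $s_\lambda(\bm s)\Delta(\bm s)^2 = \det(s_i^{\lambda_j+m-j})_{i,j=1}^{m}\cdot \det(s_i^{k-1})_{i,k=1}^{m}$ and apply Andreief's identity to reduce the $m$-fold integral to $m!\,\det_{j,k}\bigl(\int_0^1 s^{\alpha+\lambda_j+m-j+k-1}(1-s)^{\beta}\,ds\bigr)$, whose entries are Beta functions $B(\alpha+\lambda_j+m-j+k,\beta+1)$. Writing these as $\Gamma$-ratios and factoring the $j$-dependent factor $\Gamma(\alpha+\lambda_j+m-j+1)/\Gamma(\alpha+\beta+\lambda_j+2m-j+1)$ from row $j$ isolates the Vandermonde $\prod_{i<j}(\lambda_i-\lambda_j+j-i)$ in the remaining determinant. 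This matches $s_\lambda(1_m)$ via the Weyl dimension formula $s_\lambda(1_m)=\prod_{i<j}(\lambda_i-\lambda_j+j-i)/(j-i)$, and regrouping the $\Gamma$-ratios according to the definition of $[u]_\lambda$ yields the prefactor $[\alpha+m]_\lambda/[\alpha+\beta+2m]_\lambda$.

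For the spherical identity \eqref{spherical-kaneko}, the same Andreief-plus-Schur-determinant strategy applies, now with entries $\int_0^\infty x^{\alpha+\lambda_j+m-j+k-1}/(1+x)^{\alpha+\beta+2m}\,dx$, which are Beta functions of a different combination of parameters (convergent under the length condition $l(\lambda')<1+\re\beta$). After analogous row and column operations, the transposition identity \eqref{transposition_property}, namely $[-\beta]_\lambda=(-1)^{|\lambda|}[\beta]_{\lambda'}$, recasts the resulting hypergeometric ratio into the stated form $[\alpha+m]_\lambda/[-\beta]_\lambda$ together with the sign $(-1)^{|\lambda|}$. For the Morris identity \eqref{cue-kaneko}, the unit-circle version of Andreief (using $|\Delta(\bm z)|^2=\Delta(\bm z)\,\overline{\Delta(\bm z)}$) gives single-variable contour integrals that evaluate by residues or Barnes' Beta-integral on the circle to $\Gamma$-ratios of the same structural form; the rest of the argument parallels the Jacobi case.

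The main obstacle is the combinatorial step of identifying the resulting $m\times m$ determinant of $\Gamma$-ratios with $s_\lambda(1_N)$ times the correct hypergeometric prefactor: this requires careful ordering of row and column operations so that the two Vandermonde-like factors $\prod_{i<j}(\lambda_i-\lambda_j+j-i)$ and $\prod_{i<j}(j-i)$ emerge cleanly, together with meticulous bookkeeping of signs and conjugate-partition indices arising from the transposition \eqref{transposition_property} in the Morris and spherical cases.
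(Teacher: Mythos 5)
The paper does not prove this lemma: it is presented as a compilation of the cited Schur-weighted Morris, spherical and Jacobi integrals from \cite{FW08}, \cite{FK07} and \cite{FS09}, following the surrounding discussion of the Kadell, Kaneko and Forrester results. You correctly identify this citation route and go further by sketching a self-contained Andreief-based derivation, which is a valid and standard method at Jack parameter $1$ (i.e.\ the determinantal case) and more elementary than the Jack/Selberg machinery behind \cite{FW08}. The plan is sound, but two points need tightening when carried out. In the Jacobi case, after factoring $\Gamma(\alpha+\lambda_j+m-j+1)/\Gamma(\alpha+\beta+\lambda_j+2m-j+1)$ from row $j$, the entries become $\Gamma(\beta+1)\,(u_j+1)_{k-1}\,(u_j+\beta+k+1)_{m-k}$ with $u_j=\alpha+\lambda_j+m-j$ and $k=1,\ldots,m$, i.e.\ monic polynomials of degree $m-1$ in $u_j$ rather than the monomials $u_j^{k-1}$; the Vandermonde $\prod_{i<j}(u_i-u_j)=\prod_{i<j}(\lambda_i-\lambda_j+j-i)$ therefore emerges from the alternating property and a degree count, not by inspection, and it carries a $\lambda$-independent constant that must be matched against $S_m(\alpha,\beta)$ together with the $\lambda$-independent part of the row factors (most cleanly via the $\lambda=0$ check against Selberg). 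The spherical case then delivers the $(-1)^{|\lambda|}$ and the $[-\beta]_\lambda$ factor through \eqref{transposition_property} as you describe, and the Morris case is parallel with the single-variable Morris integral replacing the Beta integral. Your route buys a self-contained elementary proof specific to $\beta=2$; the paper's route buys direct access to the general-$\beta$ Selberg-integral literature.
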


From these integration formulas, we establish the following.
\begin{lem}
\label{lem:cuetojue}
Assume that $m,N \in \mathbb{N}$,  $t\in \C$, $\re \eta_{2} > m-1$ and $\re \eta_{1}>-1$. Then
\begin{align}
\begin{split}
&\quad \int_{[0,1]^{m}} \prod_{j=1}^{m}ds_{j}\,s_{j}^{\eta_{2}-m}(1-s_{j})^{\eta_{1}}(1-(1-t)s_{j})^{N}\Delta(\bm s)^{2}
\\
&= \frac{S_{m}(\eta_{2}-m,\eta_{1})}{M_{m}(\eta_{1}+\eta_{2},-\eta_{2})}\int_{\mathcal{C}^{m}}\prod_{j=1}^{m}  \frac{dz_{j}}{i z_{j}} \,z_{j}^{\frac{\eta_{1}+2\eta_{2}}{2}}|1+z_{j}|^{\eta_{1}}(1+(1-t)z_{j})^{N}|\Delta(\bm z)|^{2}. \label{CUEtoJUE}
\end{split}
\end{align} 
\end{lem}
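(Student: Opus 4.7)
The plan is to expand both sides as Schur-function series via the dual Cauchy identity, integrate each term using the Selberg-type identities \eqref{jac-kaneko} and \eqref{cue-kaneko}, and then compare the resulting coefficients. The key insight is that the parameters in the lemma are chosen precisely so that the two hypergeometric ratios produced by these formulas coincide.

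First, I would apply the dual Cauchy identity \eqref{dci} with $N$ copies of a single variable to rewrite
\begin{equation*}
\prod_{j=1}^{m}(1-(1-t)s_j)^{N} = \sum_{\lambda,\,l(\lambda)\le m,\,\lambda_1\le N}(-(1-t))^{|\lambda|}\,s_{\lambda}(\bm s)\,s_{\lambda'}(1_N),
\end{equation*}
and analogously
\begin{equation*}
\prod_{j=1}^{m}(1+(1-t)z_j)^{N} = \sum_{\lambda,\,l(\lambda)\le m,\,\lambda_1\le N}(1-t)^{|\lambda|}\,s_{\lambda}(\bm z)\,s_{\lambda'}(1_N).
\end{equation*}
Since $N\in\mathbb{N}$, these sums are finite, so substituting into \eqref{CUEtoJUE} and exchanging sum with integral is unproblematic. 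The problem reduces to matching the Schur-weighted Jacobi average on the left against the Schur-weighted Morris average on the right, for each fixed partition $\lambda$.

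Second, on the left-hand side I would apply the Jacobi Kaneko--Hua formula \eqref{jac-kaneko} with $\alpha=\eta_{2}-m$ and $\beta=\eta_{1}$; the hypotheses $\re \eta_{2}>m-1$ and $\re \eta_{1}>-1$ give $\re\alpha,\re\beta>-1$, yielding the factor $S_{m}(\eta_{2}-m,\eta_{1})\,s_{\lambda}(1_m)\,[\eta_{2}]_{\lambda}/[\eta_{1}+\eta_{2}+m]_{\lambda}$. On the right-hand side I would apply the Morris-type formula \eqref{cue-kaneko} with $\alpha=\eta_{1}+\eta_{2}$ and $\beta=-\eta_{2}$, chosen precisely so that $(\alpha-\beta)/2=(\eta_{1}+2\eta_{2})/2$ and $\alpha+\beta=\eta_{1}$; the convergence condition $\re(\alpha+\beta)=\re\eta_{1}>-1$ is satisfied. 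This produces the factor $M_{m}(\eta_{1}+\eta_{2},-\eta_{2})\,s_{\lambda}(1_m)\,[\eta_{2}]_{\lambda}/[\eta_{1}+\eta_{2}+m]_{\lambda}\,(-1)^{|\lambda|}$.

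Third, after these substitutions both sides reduce to $S_{m}(\eta_{2}-m,\eta_{1})$ (respectively $M_{m}(\eta_{1}+\eta_{2},-\eta_{2})$) times the same Schur-series
\begin{equation*}
\sum_{\lambda,\,l(\lambda)\le m,\,\lambda_{1}\le N}(-(1-t))^{|\lambda|}\,s_{\lambda'}(1_N)\,s_{\lambda}(1_m)\,\frac{[\eta_{2}]_{\lambda}}{[\eta_{1}+\eta_{2}+m]_{\lambda}},
\end{equation*}
where the sign $(-1)^{|\lambda|}$ on the right has combined with the $(1-t)^{|\lambda|}$ from the Cauchy expansion to match the $(-(1-t))^{|\lambda|}$ on the left. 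Dividing then yields \eqref{CUEtoJUE}. The main obstacle is not analytic but bookkeeping: verifying that the specific parameter assignments $(\alpha,\beta)=(\eta_{2}-m,\eta_{1})$ for \eqref{jac-kaneko} and $(\alpha,\beta)=(\eta_{1}+\eta_{2},-\eta_{2})$ for \eqref{cue-kaneko} produce identical hypergeometric ratios $[\eta_{2}]_{\lambda}/[\eta_{1}+\eta_{2}+m]_{\lambda}$, and that the sign $(-1)^{|\lambda|}$ from the Morris formula combines correctly with the opposite sign in the Cauchy expansion on the unitary side. A minor technicality is fixing the branch of $z_{j}^{(\eta_{1}+2\eta_{2})/2}$ on $|z_{j}|=1$ consistently with the convention underlying \eqref{cue-kaneko}, but this is standard.
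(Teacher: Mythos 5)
Your proposal is correct and follows precisely the same route as the paper's proof: expand both sides via the dual Cauchy identity, integrate term by term using \eqref{jac-kaneko} on the Jacobi side and \eqref{cue-kaneko} on the Morris side, and observe that the parameter assignments produce identical hypergeometric ratios. The paper states this argument in one sentence; your version merely spells out the bookkeeping, all of which checks out.
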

\begin{proof}
We expand both powers of $N$ in \eqref{CUEtoJUE} using the dual Cauchy identity \eqref{dci}. Integrating term by term and evaluating the resulting Schur function averages using \eqref{jac-kaneko} and \eqref{cue-kaneko} results in identical expressions and completes the proof.
\end{proof}

\begin{lem}
\label{lem:cuejue-duality}
Assume that $m,N \in \mathbb{N}$, $t\in \C$, $\re \eta_{2} > m-1$ and $\re \eta_{1}>-1$. Then
\begin{equation}
\begin{split}
& \quad \frac{1}{M_{N}(\eta_{1},\eta_{2})}\int_{\mathcal{C}^{N}} \prod_{j=1}^{N}\frac{dz_{j}}{iz_{j}}\,z_{j}^{\frac{\eta_{1}-\eta_{2}}{2}}|1+z_{j}|^{\eta_{1}+\eta_{2}}(1+tz_{j})^{m}|\Delta(\bm z)|^{2}
\\
&=\frac{1}{S_{m}(\eta_{2}-m,\eta_{1}+N)}\int_{[0,1]^{m}}\prod_{j=1}^{m}ds_{j}\,s_{j}^{\eta_{2}-m}(1-s_{j})^{\eta_{1}}(1-(1-t)s_{j})^{N}\Delta(\bm s)^{2}. \label{CUEtoJUE_duality}
\end{split}
\end{equation}
\end{lem}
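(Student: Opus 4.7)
My plan is to expand both sides of \eqref{CUEtoJUE_duality} as Schur polynomial series via the dual Cauchy identity \eqref{dci}, and then to evaluate the resulting Schur-weighted integrals using the Kaneko-type formulas \eqref{cue-kaneko} and \eqref{jac-kaneko}. Both sides are polynomials in $t$ of degree at most $Nm$ (of degrees $Nm$ and $Nm$ respectively), so they are determined by their expansions in powers of $t$, and the proof reduces to matching these expansions.

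First, I would apply \eqref{dci} to the insertion on the left-hand side with $\bm z=(z_1,\ldots,z_N)$ and the $m$-tuple $(t,\ldots,t)$, obtaining $\prod_{j=1}^N(1+tz_j)^m=\sum_\lambda t^{|\lambda|}s_\lambda(\bm z)s_{\lambda'}(1_m)$. Integrating term by term against the Morris weight by \eqref{cue-kaneko} with $(\alpha,\beta)=(\eta_1,\eta_2)$, and then relabeling $\nu=\lambda'$ via the transposition identity \eqref{transposition_property}, the left-hand side of \eqref{CUEtoJUE_duality} becomes
\begin{equation*}
\sum_{\nu}t^{|\nu|}\frac{s_\nu(1_m)\,s_{\nu'}(1_N)\,[\eta_2]_\nu}{[\eta_1+N]_{\nu'}}.
\end{equation*}
Similarly, on the right-hand side, applying \eqref{dci} to $\prod_{j=1}^m(1-(1-t)s_j)^N=\sum_\mu(t-1)^{|\mu|}s_\mu(\bm s)s_{\mu'}(1_N)$ and then using \eqref{jac-kaneko} with $(\alpha,\beta)=(\eta_2-m,\eta_1)$ transforms it into
\begin{equation*}
\frac{S_m(\eta_2-m,\eta_1)}{S_m(\eta_2-m,\eta_1+N)}\sum_\mu(t-1)^{|\mu|}\frac{s_\mu(1_m)\,s_{\mu'}(1_N)\,[\eta_2]_\mu}{[\eta_1+\eta_2+m]_\mu}.
\end{equation*}
Both expressions evaluate to $1$ at $t=0$ (the LHS trivially; the RHS because only $\mu=\emptyset$ contributes after the normalization by the Selberg integral), providing a first consistency check.

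The hard part will be to match these two series for general $t$: the LHS is a series in $t$ whose denominator $[\eta_1+N]_{\nu'}$ is indexed by the \emph{conjugate} partition, while the RHS is a series in $t-1$ with denominator $[\eta_1+\eta_2+m]_\mu$ indexed by $\mu$ itself. I expect this matching to amount to a Jack/Schur polynomial analogue of the classical Kummer transformation of the Gauss hypergeometric function, establishable via the multivariate hypergeometric identities of Kaneko--Forrester type. A cleaner alternative that I would pursue first is to bypass the matching step altogether by combining Lemma \ref{lem:cuetojue} with the formal parameter substitution $(\eta_1',\eta_2',t')\mapsto(\eta_1+\eta_2,\,-\eta_2,\,1-t)$: under this choice, the CUE integrand on the right-hand side of \eqref{CUEtoJUE} matches precisely with that on the left-hand side of \eqref{CUEtoJUE_duality}, and identification of the right-hand side of \eqref{CUEtoJUE_duality} with the corresponding (now formally divergent) Jacobi integral on the left-hand side of \eqref{CUEtoJUE} then follows by analytic continuation in $\eta_2$, with an appropriate contour deformation absorbing the endpoint singularity at $s=0$.
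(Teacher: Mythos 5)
Your first line of attack --- expanding both sides by the dual Cauchy identity and evaluating the Schur averages via \eqref{cue-kaneko} and \eqref{jac-kaneko} --- is the right general idea, and both series you write out are correctly computed. But you stop precisely at the step that carries all the content: you obtain a series in powers of $t$ with denominator $[\eta_1+N]_{\nu'}$ from the CUE side and a series in powers of $(t-1)$ with denominator $[\eta_1+\eta_2+m]_\mu$ from the Jacobi side, and you explicitly leave the matching open, conjecturing a multivariate Kummer transformation. That matching is not a routine manipulation, and without it the lemma is not proved. The paper avoids the $(t-1)$-expansion entirely by first applying the Cayley change of variables $s_j=x_j/(1+x_j)$ on the Jacobi side, turning $(1-(1-t)s_j)^N$ into $(1+tx_j)^N/(1+x_j)^N$ and the Jacobi weight $s_j^{\eta_2-m}(1-s_j)^{\eta_1}$ into the spherical weight $x_j^{\eta_2-m}/(1+x_j)^{\eta_1+\eta_2+m+N}$. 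The spherical Kaneko formula \eqref{spherical-kaneko} then gives a Schur expansion in powers of $t$ (not $t-1$) with the denominator $[-\eta_1-N]_\lambda$, which after transposing via \eqref{transposition_property} is literally the same sum as the CUE side. That single change of variables is the missing idea.

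Your ``cleaner alternative'' via Lemma \ref{lem:cuetojue} does not work. Substituting $(\eta_1',\eta_2',t')=(\eta_1+\eta_2,-\eta_2,1-t)$ into the right-hand side of \eqref{CUEtoJUE} produces an $m$-dimensional CUE integral with insertion $(1+tz_j)^N$, whereas the left-hand side of \eqref{CUEtoJUE_duality} is an $N$-dimensional CUE integral with insertion $(1+tz_j)^m$; the dimension and the insertion exponent are transposed and the two integrals are not the same. The identity relating them is exactly Corollary \eqref{cue-duality}, which is derived \emph{from} Lemma \ref{lem:cuejue-duality}, so invoking it here would be circular. Moreover, the substituted left-hand side of \eqref{CUEtoJUE} becomes a Jacobi integral with weight $s_j^{-\eta_2-m}(1-s_j)^{\eta_1+\eta_2}(1-ts_j)^N$, which is not the right-hand side of \eqref{CUEtoJUE_duality} (wrong power of $s_j$, and $(1-ts_j)$ rather than $(1-(1-t)s_j)$), independent of the divergence issue. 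As a smaller point, your $t=0$ check of the Schur series on the Jacobi side is not ``only $\mu=\emptyset$ contributes'': every $\mu$ contributes a $(-1)^{|\mu|}$ term, and the fact that the sum equals $S_m(\eta_2-m,\eta_1+N)/S_m(\eta_2-m,\eta_1)$ is a nontrivial summation (the paper's \eqref{morris-selberg-identity}); the integral itself equals $1$ at $t=0$ only because it reduces to a Selberg integral.
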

\begin{proof}
To establish this duality, we start from the integral on the right-hand side of \eqref{CUEtoJUE_duality} and make the change of variables $s_{j} = \frac{x_{j}}{1+x_{j}}$ for $j=1,\ldots,m$, leading to
\begin{equation}
\begin{split}
&\quad  \int_{[0,1]^{m}} \prod_{j=1}^{m}ds_{j}\,s_{j}^{\eta_{2}-m}(1-s_{j})^{\eta_{1}}(1-(1-t)s_{j})^{N}\Delta(\bm s)^{2} 
\\ 
&=\int_{[0,\infty)^{m}} \prod_{j=1}^{m}dx_{j}\,\frac{x_{j}^{\eta_{2}-m}}{(1+x_{j})^{\eta_{1}+\eta_{2}+m+N}}(1+tx_{j})^{N}\Delta(\bm x)^{2}\\
&=\sum_{\lambda, l(\lambda) \leq m, \lambda_{1} \leq N}t^{|\lambda|}s_{\lambda'}(1_{N})\int_{[0,\infty)^{m}}\prod_{j=1}^{m}dx_{j}\,\frac{x_{j}^{\eta_{2}-m}}{(1+x_{j})^{\eta_{1}+\eta_{2}+m+N}}s_{\lambda}(\bm x)\Delta(\bm x)^{2}\\
&=S_{m}(\eta_{2}-m,\eta_{1}+N)\sum_{\lambda, l(\lambda) \leq m, \lambda_{1} \leq N}(-t)^{|\lambda|}s_{\lambda'}(1_{N})s_{\lambda}(1_{m})\frac{[\eta_{2}]_{\lambda}}{[-\eta_{1}-N]_{\lambda}}, \label{dual_pf_eq1}
\end{split}
\end{equation}
where we applied \eqref{dci} and \eqref{spherical-kaneko}. Applying the same Schur function expansion to the left-hand side of \eqref{CUEtoJUE_duality} followed by use of \eqref{cue-kaneko}, we arrive at
\begin{equation}
\begin{split}
&\quad \int_{\mathcal{C}^{N}} \prod_{j=1}^{N}\frac{dz_{j}}{iz_{j}}\,z_{j}^{\frac{\eta_{1}-\eta_{2}}{2}}|1+z_{j}|^{\eta_{1}+\eta_{2}}(1+tz_{j})^{m}|\Delta(\bm z)|^{2}
\\
&  = \sum_{\lambda, l(\lambda) \leq N, \lambda_{1} \leq m}t^{|\lambda|}s_{\lambda'}(1_{m}) \int_{\mathcal{C}^{N}} \prod_{j=1}^{N}\frac{dz_{j}}{iz_{j}}\,z_{j}^{\frac{\eta_{1}-\eta_{2}}{2}}|1+z_{j}|^{\eta_{1}+\eta_{2}}s_{\lambda}(\bm z) |\Delta(\bm z)|^{2}
\\
&= M_{N}(\eta_{1},\eta_{2})\sum_{\lambda, l(\lambda) \leq N, \lambda_{1} \leq m}(-t)^{|\lambda|}s_{\lambda'}(1_{m})s_{\lambda}(1_{N})\frac{[-\eta_{2}]_{\lambda}}{[\eta_{1}+N]_{\lambda}}\\
&=M_{N}(\eta_{1},\eta_{2})\sum_{\lambda, l(\lambda) \leq N, \lambda_{1} \leq m}(-t)^{|\lambda|}s_{\lambda'}(1_{m})s_{\lambda}(1_{N})\frac{[\eta_{2}]_{\lambda'}}{[-\eta_{1}-N]_{\lambda'}},\label{dual_pf_eq2}
\end{split}
\end{equation}
where we applied  \eqref{transposition_property} in the last line. Finally, note that the sums in the last line of \eqref{dual_pf_eq1} and \eqref{dual_pf_eq2} are identical. This follows from the fact that $\lambda_{1} = l(\lambda')$ and summing over $\lambda'$ in place of $\lambda$. 
\end{proof}
Comparing Lemmas \ref{lem:cuejue-duality} and \ref{lem:cuetojue}, we immediately get the following duality between CUE averages of dimension $N$ and $m$.
\begin{cor}
For any $\eta_{1},\eta_{2}$ such that $\mathrm{Re}(\eta_{1}+\eta_{2}) > -1$ and $\mathrm{Re}(\eta_{1})>-1$,  and any $t\in \C$,  we have
\begin{equation}
\begin{split}
\label{cue-duality}
&\quad \frac{S_{m}(\eta_{2}-m,\eta_{1}+N)}{M_{N}(\eta_{1},\eta_{2})}\int_{\mathcal{C}^{N}}\prod_{j=1}^{N}\frac{dz_{j}}{iz_{j}}\,z_{j}^{\frac{\eta_{1}-\eta_{2}}{2}}|1+z_{j}|^{\eta_{1}+\eta_{2}}(1+tz_{j})^{m}|\Delta(\bm z)|^{2}
\\
&= \frac{S_{m}(\eta_{2}-m,\eta_{1})}{M_{m}(\eta_{1}+\eta_{2},-\eta_{2})}\int_{\mathcal{C}^{m}}\prod_{j=1}^{m}\frac{dz_{j}}{iz_{j}}\,z_{j}^{\frac{\eta_{1}+2\eta_{2}}{2}}|1+z_{j}|^{\eta_{1}}(1+(1-t)z_{j})^{N}|\Delta(\bm z)|^{2}.
\end{split}
\end{equation}
\end{cor}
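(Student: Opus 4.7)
The identity is immediate once one recognises that the Jacobi (JUE) integral
\[
\mathcal{J}(t;\eta_{1},\eta_{2},N,m) := \int_{[0,1]^{m}} \prod_{j=1}^{m}ds_{j}\,s_{j}^{\eta_{2}-m}(1-s_{j})^{\eta_{1}}\bigl(1-(1-t)s_{j}\bigr)^{N}\Delta(\bm s)^{2}
\]
appears on the right-hand side of Lemma~\ref{lem:cuejue-duality} and on the left-hand side of Lemma~\ref{lem:cuetojue}, so it can simply be eliminated. My plan is therefore the following three-line argument.

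First, I would invoke Lemma~\ref{lem:cuejue-duality} to write
\[
\mathcal{J}(t;\eta_{1},\eta_{2},N,m) \;=\; \frac{S_{m}(\eta_{2}-m,\eta_{1}+N)}{M_{N}(\eta_{1},\eta_{2})}\int_{\mathcal{C}^{N}}\prod_{j=1}^{N}\frac{dz_{j}}{iz_{j}}\,z_{j}^{\frac{\eta_{1}-\eta_{2}}{2}}|1+z_{j}|^{\eta_{1}+\eta_{2}}(1+tz_{j})^{m}|\Delta(\bm z)|^{2}.
\]
Second, I would invoke Lemma~\ref{lem:cuetojue} to write
\[
\mathcal{J}(t;\eta_{1},\eta_{2},N,m) \;=\; \frac{S_{m}(\eta_{2}-m,\eta_{1})}{M_{m}(\eta_{1}+\eta_{2},-\eta_{2})}\int_{\mathcal{C}^{m}}\prod_{j=1}^{m}\frac{dz_{j}}{iz_{j}}\,z_{j}^{\frac{\eta_{1}+2\eta_{2}}{2}}|1+z_{j}|^{\eta_{1}}\bigl(1+(1-t)z_{j}\bigr)^{N}|\Delta(\bm z)|^{2}.
\]
Third, I would equate the two expressions for $\mathcal{J}(t;\eta_{1},\eta_{2},N,m)$; this is exactly \eqref{cue-duality}.

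The only subtlety is matching the parameter hypotheses of the two lemmas with those of the corollary. Lemma~\ref{lem:cuetojue} requires $\re\eta_{2}>m-1$ and $\re\eta_{1}>-1$ in order for the JUE integrand and the partial fractions in the dual Cauchy expansion \eqref{dci} to be integrable; the CUE-$m$ integral on its right-hand side only needs $\re(\eta_{1}+\eta_{2})>-1$ via the Morris integral \eqref{morris}. Similarly Lemma~\ref{lem:cuejue-duality} uses the JUE representation valid under $\re\eta_{2}>m-1,\ \re\eta_{1}>-1$. Hence the elimination step yields \eqref{cue-duality} first under the stronger hypotheses. To recover the corollary under the stated weaker hypotheses $\re(\eta_{1}+\eta_{2})>-1$ and $\re\eta_{1}>-1$, I would note that both sides of \eqref{cue-duality} (after cancellation of the $\Gamma$-factors hidden in $S_{m}$ and $M_{m},M_{N}$) are entire in $\eta_{2}$ and jointly analytic in $\eta_{1}$ on the common domain of convergence of the two CUE integrals, so an analytic continuation argument in $\eta_{2}$ extends the identity to the full range claimed.

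The main (minor) obstacle is precisely this last bookkeeping: checking that the prefactors $S_{m}(\eta_{2}-m,\eta_{1})$ and $S_{m}(\eta_{2}-m,\eta_{1}+N)$, written as products of Gamma functions, combine with the CUE integrals to give expressions analytic in $\eta_{2}$ on the larger half-plane $\re(\eta_{1}+\eta_{2})>-1$, so that analytic continuation is legitimate and no genuine restriction $\re\eta_{2}>m-1$ is imposed on the final statement. Everything else is a direct consequence of Lemmas~\ref{lem:cuetojue} and~\ref{lem:cuejue-duality}.
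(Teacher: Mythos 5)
Your argument is exactly the paper's: the paper proves the corollary by the one-line remark ``Comparing Lemmas \ref{lem:cuejue-duality} and \ref{lem:cuetojue}, we immediately get the following duality,'' which is precisely your elimination of the common Jacobi integral $\mathcal{J}(t;\eta_{1},\eta_{2},N,m)$. Your added observation that the lemmas are stated under $\Re\eta_{2}>m-1$ while the corollary claims only $\Re(\eta_{1}+\eta_{2})>-1$ and $\Re\eta_{1}>-1$ (exactly the convergence hypotheses for $M_{N}(\eta_{1},\eta_{2})$ and $M_{m}(\eta_{1}+\eta_{2},-\eta_{2})$), and that an analytic continuation in $\eta_{2}$ is needed to bridge the gap, is a point the paper silently glosses over; your bookkeeping is correct and fills it in.
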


\begin{rem}
Setting $t=0$ in \eqref{cue-duality} and using the identity $(1+z_{j})^{N} = z_{j}^{N/2}|1+z_{j}|^{N}$, we get the identity
\begin{equation}
\frac{S_{m}(\eta_{2}-m,\eta_{1}+N)}{S_{m}(\eta_{2}-m,\eta_{1})} = \frac{M_{m}(\eta_{1}+\eta_{2}+N,-\eta_{2})}{M_{m}(\eta_{1}+\eta_{2},-\eta_{2})} \label{morris-selberg-identity}
\end{equation}
which can be checked directly by substituting the evaluations \eqref{morris} and \eqref{selberg}. Substituting $t=1$ in \eqref{cue-duality} and using $(1+z_{j})^{m} = z_{j}^{m/2}|1+z_{j}|^{m}$ gives another less obvious relation
\begin{equation}
 \frac{S_{m}(\eta_{2}-m,\eta_{1})}{S_{m}(\eta_{2}-m,\eta_{1}+N)} = \frac{M_{N}(\eta_{1}+m,\eta_{2})}{M_{N}(\eta_{1},\eta_{2})}.
\end{equation}
This can also be verified by direct substitution of \eqref{morris} and \eqref{selberg}, combined with appropriate manipulations of the resulting product of Gamma functions.
\end{rem}

\section{Large deviation probabilities of the JUE} \label{Section_large deviation JUE}

In this section, we prove Theorems~\ref{Thm_JUE LDP weak} and ~\ref{Thm_JUE LDP}. 

\subsection{Equilibrium measure of the constrained JUE}

Suppose $\alpha,\beta \ge 0$. Let $x_{1},\ldots,x_{n}$ be distributed according to \eqref{juepdf} with $\lambda_{1}=\alpha n$ and $\lambda_{2}=\beta n$ and \textit{conditioned} on the event that $\max\{x_{1},\ldots,x_{n}\}< d$ for some $d\in (0,1]$. In other words, the joint probability density function of $x_{1},\ldots,x_{n}$ is proportional to 
\begin{align}\label{density JUE n 2}  
\prod_{1\leq i < j \leq n} (x_{j}-x_{i})^{2} \prod_{j=1}^{n}e^{-nV(x_{j})} \, dx_{j}, \qquad x_{1},\ldots,x_{n}\in (0,1),
\end{align}
where  
\begin{align}\label{def of V}
V(x) := \begin{cases}
-\alpha \log x - \beta \log(1-x), & \mbox{if } x\in (0,d), 
\smallskip 
\\
+\infty, & \mbox{if } x\in [d,1).
\end{cases}
\end{align}
The equilibrium measure, denoted $\mu_V$, is the unique minimizer of the functional
\begin{align*}
\sigma \mapsto \iint \log \frac{1}{|x-y|} \, d\sigma(x) \, d\sigma(y) + \int V(x) \, d\sigma(x)
\end{align*}
among all Borel probability measures $\sigma$ supported on $[0,1]$. 

Recall that the Wachter distribution $\mu_{\rm W}$ is defined in \eqref{def of Wachter distribution}. 
Let \( d \in (0, b) \), where \( b \) denotes the right endpoint of the support of the Wachter distribution.
Let $\mathfrak{m}\in (0,a)$ be the unique solution of
\begin{align}  \label{eq of mathfrak m d}
2 = \alpha \bigg( \frac{\sqrt{d} }{\sqrt{ \mathfrak{m}  }}-1 \bigg) + \beta \bigg( \frac{\sqrt{1-d}}{\sqrt{1- \mathfrak{m} }}-1 \bigg).
\end{align} 
This is same as \eqref{a eq 4} with $\alpha=\gamma-1$, $\beta=\delta$ and $d=1-q^{2}$. 
The following is shown in \cite[Section 4]{RKC12}, see also Remark~\ref{Rem_explicit m} and Figure~\ref{Fig_JUE constrained}. 
\begin{prop}[\textbf{Constrained JUE distribution}] \label{Prop_constrained Wacther} 
Let $\alpha \ge 0$ and $\beta>0$.
\begin{itemize}
    \item[\textup(a)] \textup{\textbf{(Pulled regime)}} Suppose $d \in (b,1]$. Then $\mu_V= \mu_{ \rm W }$, where $\mu_{ \rm W }$ is the Wachter distribution \eqref{def of Wachter distribution}.  
    \smallskip
    \item[\textup (b)] \textup{\textbf{(Pushed regime)}} Suppose $d \in (0,b)$. \smallskip 
    \begin{itemize}
        \item[\textup(b1)] For $\alpha=0$, we have 
\begin{align}\label{def of mu d small N}
d\mu_V(x) = \frac{\psi(x)  }{\sqrt{x(d-x)}}\, dx, \qquad \psi(x):=\frac{1}{\pi} \bigg( 1+\frac{\beta}{2}-\frac{\beta \sqrt{1-d}}{2(1-x)} \bigg), \qquad x\in [0,d].
\end{align}
        \item[\textup(b2)] For $\alpha>0$, we have 
\begin{align}\label{def of mu d small}
d\mu_V(x) = \psi(x)\sqrt{\frac{x-\mathfrak{m}}{d-x}}\,dx, \qquad \psi(x):=\frac{1}{2\pi} \bigg( \frac{\alpha}{x}\frac{\sqrt{d}}{\sqrt{\mathfrak{m}}} - \frac{\beta}{1-x} \frac{\sqrt{1-d}}{\sqrt{1-\mathfrak{m}}} \bigg), \qquad x\in [\mathfrak{m},d]. 
\end{align}
    \end{itemize}
\end{itemize}
\end{prop}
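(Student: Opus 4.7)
The plan is to characterize $\mu_V$ via its Euler--Lagrange (EL) conditions: by standard weighted potential theory, $\mu_V$ is the unique probability measure on $[0,1]$ for which there exists a constant $\ell \in \mathbb{R}$ with
\begin{equation*}
2\int \log\frac{1}{|x-y|}\,d\mu_V(y) + V(x) \begin{cases} = \ell, & x\in \mathrm{supp}(\mu_V),\\ \ge \ell, & x\in(0,d)\setminus\mathrm{supp}(\mu_V), \end{cases}
\end{equation*}
which on the interior of the support differentiates to the singular integral equation $2\,\mathrm{p.v.}\int (x-y)^{-1}\,d\mu_V(y) = V'(x) = -\alpha/x + \beta/(1-x)$. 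For each case, I would produce a candidate measure, verify the singular equation on its support via the Cauchy transform $G(z) := \int (z-y)^{-1}\,d\mu_V(y)$, and then check positivity and the EL inequality off the support.

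For (a), when $d \in (b,1]$ the support $[a,b]$ of $\mu_{\rm W}$ lies strictly inside $[0,d)$, so the constraint $x<d$ is inactive on $\mathrm{supp}(\mu_{\rm W})$. Since $\mu_{\rm W}$ is already the unconstrained equilibrium measure for the potential $-\alpha \log x -\beta \log(1-x)$ on $(0,1)$, it satisfies the full EL system for the unconstrained problem, which is strictly stronger than the one for the constrained problem; therefore $\mu_V = \mu_{\rm W}$.

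For (b2), the hard constraint at $d$ creates a hard edge there while the singularity of $V$ at $0$ forces a soft edge at some $\mathfrak{m}\in(0,a)$, leading to the one-cut ansatz
\begin{equation*}
\frac{d\mu_V}{dx}(x) = \Big(\frac{A}{x} + \frac{B}{1-x}\Big)\sqrt{\frac{x-\mathfrak{m}}{d-x}}, \qquad x \in [\mathfrak{m},d],
\end{equation*}
where the rational prefactor mirrors the poles of $V'$. Setting $R(z):= \sqrt{(z-\mathfrak{m})(z-d)}/(z-d)$ with branch cut on $[\mathfrak{m},d]$ and $R(z)\to 1$ as $z \to \infty$, the Cauchy transform is forced to take the form $G(z) = V'(z)/2 + h(z) R(z)$ for some $h$ analytic off the cut, and the jump condition $G_+ - G_- = -2\pi i\,(d\mu_V/dx)$ on the cut identifies $h(z) = \pi\big(A/z + B/(1-z)\big)$. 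Requiring $G$ to be analytic at $z=0$ and $z=1$ (to cancel the simple poles of $V'/2$) pins down $A = \alpha\sqrt{d}/(2\pi\sqrt{\mathfrak{m}})$ and $B = -\beta\sqrt{1-d}/(2\pi\sqrt{1-\mathfrak{m}})$, and the normalization $G(z) = 1/z + O(1/z^2)$ as $z\to\infty$ extracts exactly equation \eqref{eq of mathfrak m d}; a short monotonicity argument on the right-hand side of \eqref{eq of mathfrak m d} in $\mathfrak{m}$ gives existence and uniqueness of $\mathfrak{m} \in (0,a)$, with $\mathfrak{m}\uparrow a$ as $d\uparrow b$.

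Part (b1) ($\alpha = 0$) is handled analogously with hard edges at both endpoints, via the ansatz $d\mu_V/dx = \psi(x)/\sqrt{x(d-x)}$ on $[0,d]$, leading to \eqref{def of mu d small N}. The main technical obstacle is verifying the EL inequality strictly off the support; in case (b2) on $(0,\mathfrak{m})$ this reduces to the sign of the derivative of the effective potential, which equals $V'(x) - 2 G(x) = -2 h(x) R(x)$ there, and this is controlled since $R > 0$ on $(0,\mathfrak{m})$ and $h$ is strictly decreasing on $(0,1)$ with $h(0^+) = +\infty$, so $h$ has a constant sign on $(0,\mathfrak{m}]$ equal to that of $h(\mathfrak{m}) = \pi\psi(\mathfrak{m})$. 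The fact that $\psi(\mathfrak{m}) \ge 0$ in the pushed regime $d<b$---which delivers both the nonnegativity of the density on $[\mathfrak{m},d]$ and the required sign of the effective potential on $(0,\mathfrak{m})$---follows from \eqref{eq of mathfrak m d} combined with a direct monotonicity computation, using that the boundary case $d = b$ corresponds exactly to $\mathfrak{m} = a$ and $\psi(\mathfrak{m}) = 0$.
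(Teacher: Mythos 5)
The paper does not prove this proposition directly but cites \cite[Section~4]{RKC12}, which derives the constrained density by the Coulomb gas / Tricomi inversion method. Your Euler--Lagrange plus Cauchy-transform argument is essentially the same method in a cleaner complex-analytic guise, and parts (a), (b1) and the algebraic identification of $A$, $B$, $\mathfrak{m}$ in (b2) are all correct. In particular the normalization condition correctly reproduces~\eqref{eq of mathfrak m d}.

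However, the final positivity and EL-inequality verification contains a concrete error. You correctly observe that $h$ (equivalently $\psi = h/\pi$) is strictly decreasing on $(0,1)$, since $A>0$ and $B<0$ force $h'(x)=\pi\big(-A/x^2+B/(1-x)^2\big)<0$. But then the binding positivity constraint for the density $\psi(x)\sqrt{(x-\mathfrak{m})/(d-x)}$ on $[\mathfrak{m},d]$ is $\psi(d)\ge 0$, \emph{not} $\psi(\mathfrak{m})\ge 0$, precisely because $\psi$ is decreasing. Likewise, the boundary case $d=b$ (with $\mathfrak{m}=a$) has $\psi(d)=\psi_{\rm W}(b)=0$ while $\psi(\mathfrak{m})=\psi_{\rm W}(a)>0$: this follows by writing the Wachter density \eqref{def of Wachter distribution} as $\psi_{\rm W}(x)\sqrt{(x-a)/(b-x)}$ with $\psi_{\rm W}(x)=\tfrac{\alpha+\beta+2}{2\pi}\tfrac{b-x}{x(1-x)}$, so $\psi_{\rm W}(b)=0$ and $\psi_{\rm W}(a)>0$. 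So your claim that ``$d=b$ corresponds exactly to $\mathfrak{m}=a$ and $\psi(\mathfrak{m})=0$'' has $\mathfrak{m}$ and $d$ swapped. The correct argument is: $\psi(d)\ge 0$ holds exactly for $d\le b$ (vanishing at $d=b$), this gives nonnegativity of the density on $[\mathfrak{m},d]$ by monotonicity, and since $\psi(\mathfrak{m})>\psi(d)\ge 0$ you then get the constant sign of $h$ on $(0,\mathfrak{m}]$ needed for the EL inequality, as you describe. Fixing this swap, the argument goes through.
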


In the case $\alpha = 0$, the left endpoint $0$ in \eqref{def of mu d small N} is a hard edge, whereas for $\alpha > 0$, the left endpoint $\mathfrak{m}$ in \eqref{def of mu d small} is a soft edge. In both cases, the right endpoint $d$ is a hard edge. Thus, \eqref{def of mu d small N} has two hard edges, while \eqref{def of mu d small} has one hard edge and one soft edge. These correspond, respectively, to the general settings described in \eqref{def of eq msr general two hard edge} and \eqref{def of eq msr general one hard edge}.

\begin{figure}
	\begin{subfigure}{0.35\textwidth}
	\begin{center}	
		\includegraphics[width=\textwidth]{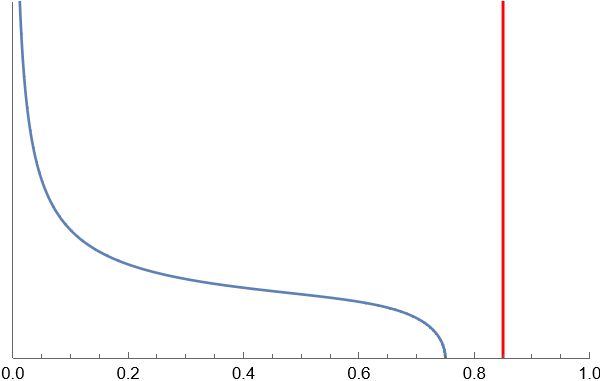}
	\end{center}
	\subcaption{$\lambda_1=0, \lambda_2=2n$, $d=0.85$}
\end{subfigure}	 \qquad 
\begin{subfigure}{0.35\textwidth}
	\begin{center}	
		\includegraphics[width=\textwidth]{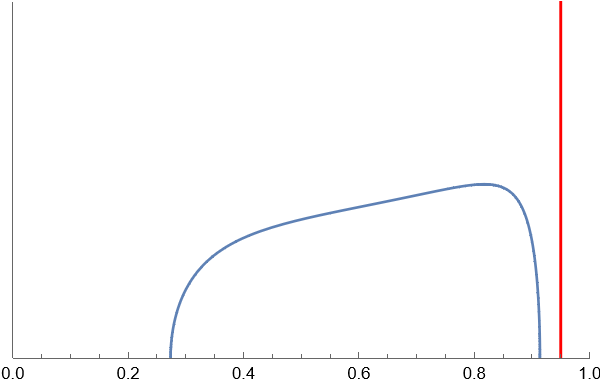}
	\end{center}
	\subcaption{$\lambda_1=4n, \lambda_2=2n$, $d=0.9$}
\end{subfigure}	

\begin{subfigure}{0.35\textwidth}
	\begin{center}	
		\includegraphics[width=\textwidth]{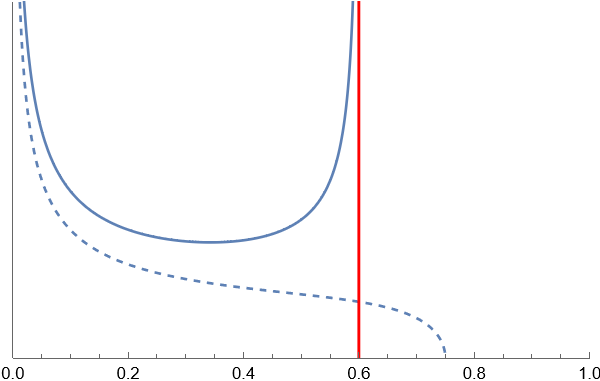}
	\end{center}
	\subcaption{$\lambda_1=0, \lambda_2=2n$, $d=0.6$}
\end{subfigure}	 \qquad 
\begin{subfigure}{0.35\textwidth}
	\begin{center}	
		\includegraphics[width=\textwidth]{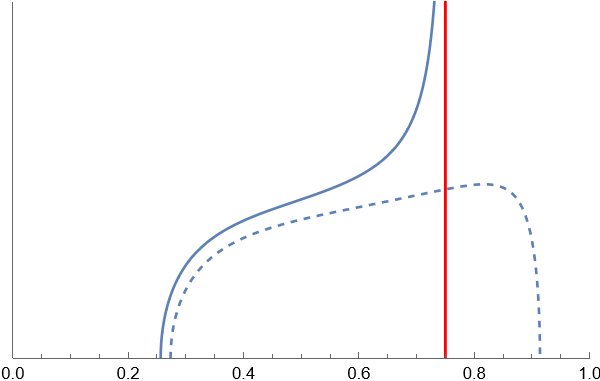}
	\end{center}
	\subcaption{$\lambda_1=4n, \lambda_2=2n$, $d=0.75$}
\end{subfigure}	
	\caption{Plots show limiting densities of the constrained JUE. (A) and (B) correspond to the pulled regime of Proposition \ref{Prop_constrained Wacther}, and (C) and (D) to the pushed regime. } \label{Fig_JUE constrained}
\end{figure}

\subsection{Asymptotics of Hankel determinants with a Laguerre-type or Jacobi-type potential}

In this subsection, we recall results from \cite{CG21} and adapt them to our setting.  
Here, we consider general potentials \( V \) and \( W \). Let
\begin{align}
\begin{split}
L_{n}(V,W) & := \det \bigg( \int_{\mathcal{I}} x^{j+k}e^{-nV(x)}e^{W(x)} \,dx \bigg)_{j,k=0,\ldots,n-1}
\\
& = \frac{1}{n!}\int_{\mathcal{I}}\ldots\int_{\mathcal{I}}\prod_{1\leq j < k \leq n} (x_{k}-x_{j})^{2} \prod_{j=1}^{n}e^{-nV(x_{j})}e^{W(x_j)}\, dx_{j}, \label{def of Ln}
\end{split}
\end{align}
where $\mathcal{I}\subset \R$.

Let $\mu \equiv \mu_V$ be the equilibrium measure associated with the potential $V$ and write $\mathcal{S}:=\mathrm{supp} \, \mu$. As is well-known \cite{ST97}, $\mu$ is completely determined by the following Euler-Lagrange variational conditions:
\begin{align}
& 2\int \log|x-y| \, d\mu(y) = V(x)-\ell, \qquad \mbox{for } x \in \mathcal{S}, \label{EL equality 2} 
\\
& 2\int \log|x-y| \, d\mu(y) \leq V(x)-\ell, \qquad \mbox{for } x \in \R\setminus \mathcal{S}, \label{EL inequality 2}
\end{align}
where $\ell$ is a real constant. 

In the statement of the following result from \cite{CG21}, ``\emph{one-cut regular}" means that $V:\mathcal{I}\to \R$ is real-analytic, satisfies the growth condition $\lim_{x\to \pm \infty, x\in \mathcal{I}}V(x)/\log|x| = +\infty$, that $\mathcal{S}=[a,b]$ consists of a single interval, that the Euler-Lagrange inequality \eqref{EL inequality 2} is assumed to hold strictly outside $[a,b]$, and that the density $\psi$ of $\mu$ is strictly positive on $[a,b]$ and admits an analytic continuation in a neighbourhood of $[a,b]$.

The precise asymptotic behaviour of the Hankel determinant $L_n(V,W)$ depends on the nature of the associated equilibrium measure, particularly on the number of hard edges. Theorem~\ref{thm:general a and b} treats the case with one hard edge and one soft edge, while Theorem~\ref{thm:general a and b Jacobi-type} addresses the case with two hard edges.

The following result can be seen as a special case of \cite[Theorem 1.2]{CG21}. 
However, deriving it involves a series of elementary but careful transformations, 
relying on \cite[Remark 1.4]{Ch19}. 
For the reader’s convenience, we provide the details of this procedure in Appendix~\ref{Appendix_general results on Hankel}.

In what follows, $W$ will be a function satisfying the following assumption.
\begin{assump}\label{ass:W}
The function $W:\mathcal{I}\to \R\cup \{+\infty\}$ satisfies the following.
\begin{itemize}
\item $W$ is analytic in a neighbourhood of $\mathcal{S}$,
\smallskip 
\item $e^{W}$ is integrable on $\mathcal{I}$,
\smallskip 
\item there exists $c'>0$ such that $e^{W(x)-c'V(x)}\to 0$ as $x\to \pm \infty$, $x\in \mathcal{I}$.
\end{itemize}
\end{assump}

\begin{thm}[\textbf{Partition function expansion: one soft edge and one hard edge in the equilibrium measure}]  \label{thm:general a and b}
Let $b>a$, $\mathcal{I}=(-\infty,b]$, and $V:\mathcal{I}\to \R$ be a one-cut regular potential whose equilibrium measure is of the form
\begin{equation} \label{def of eq msr general one hard edge}
d\mu(x)= \psi(x)\sqrt{\frac{x-a}{b-x}} \, \mathbbm{1}_{ [a,b] }(x) dx. 
\end{equation}
Let $W$ be a function satisfying Assumption \ref{ass:W}.
Then as $n \to \infty$, we have
	\begin{equation}
		L_{n}(V,W) = \exp\bigg(C_{1} n^{2} + C_{2} n + C_{3} \log n + C_{4} +  O \Big( \frac{\log n}{n} \Big)\bigg),
	\end{equation}
	where
	\begin{align*}
		& C_{1}  = \log \frac{b-a}{4} - \frac{3}{2} - \frac{1}{2} \int_{a}^{b} \Big(V(x)-4\frac{b-x}{b-a}\Big)\Big( \frac{2}{(b-a)\pi} + \psi(x) \Big)\sqrt{\frac{x-a}{b-x}} \, dx, 
		\\
		& C_{2} = \log(2\pi) + \int_{a}^{b}W(x)\psi(x)\sqrt{\frac{x-a}{b-x}}\, dx, \nonumber \\
		& C_{3} = - \frac{1}{6}, \\
		& C_{4} = 2\zeta^{\prime}(-1) - \frac{1}{8}\log \big( \tfrac{b-a}{2}\pi\psi(b) \big) - \frac{1}{24}\log \big( \tfrac{b-a}{2}\pi\psi(a) \big) \\
	& \qquad + \frac{1}{4\pi^{2}} \int_{a}^{b}  \frac{W(x)}{\sqrt{(b-x)(x-a)}} \bigg(\dashint_{a}^{b} \frac{W^{\prime}(y)\sqrt{(b-y)(y-a)}}{x-y} \, dy \bigg) \, dx.
	\end{align*}
\end{thm}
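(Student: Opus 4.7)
My plan is to derive Theorem~\ref{thm:general a and b} from \cite[Theorem 1.2]{CG21}, which establishes the analogous four-term expansion for Hankel determinants of Laguerre type on $[0,+\infty)$, combined with \cite[Remark 1.4]{Ch19}, which provides an alternative representation for the leading coefficient in terms of the equilibrium density rather than in terms of the Euler--Lagrange constant $\ell$ in \eqref{EL equality 2}.

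I would first apply the affine change of variables $x \mapsto b-x$, which maps $\mathcal{I}=(-\infty,b]$ to $[0,+\infty)$, sends the hard edge at $b$ to the origin, and sends the soft edge at $a$ to $b-a$. Under this substitution the Hankel determinant $L_n(V,W)$ becomes, up to an explicit Jacobian factor, a Laguerre-type Hankel determinant with potential $\tilde V(t)=V(b-t)$ and insertion $\tilde W(t)=W(b-t)$, whose equilibrium density has the expected $t^{-1/2}\sqrt{(b-a)-t}$ shape and therefore falls within the framework of \cite{CG21}. Assumption~\ref{ass:W} transfers directly to $\tilde V$ and $\tilde W$, so the analyticity, integrability, and growth hypotheses of \cite[Theorem 1.2]{CG21} are preserved.

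Applying \cite[Theorem 1.2]{CG21} would then yield the four-term asymptotic expansion whose coefficients I would translate back via the change of variables. The $\log n$ coefficient $C_3=-\tfrac{1}{6}$ is universal, arising from the combined local contributions of the Airy parametrix at the soft edge and the Bessel parametrix at the hard edge in the Deift--Zhou steepest descent underlying \cite{CG21}. The constant $C_4$ would collect the Barnes $G$-function contribution $2\zeta'(-1)$, the local edge constants $-\tfrac{1}{8}\log(\tfrac{b-a}{2}\pi\psi(b))$ at the hard edge and $-\tfrac{1}{24}\log(\tfrac{b-a}{2}\pi\psi(a))$ at the soft edge, and the Fisher--Hartwig double principal-value integral in $W$ produced by the global parametrix. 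The coefficient $C_2=\log(2\pi)+\int W\,d\mu$ is immediate from the role of the $W$-insertion as a linear statistic.

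The remaining work, and the main obstacle, is the identification of $C_1$. The expansion from \cite[Theorem 1.2]{CG21} produces a variational expression for the leading coefficient involving $\ell$. Using \cite[Remark 1.4]{Ch19}, I would eliminate $\ell$ by comparing $V$ against the affine reference potential $V_0(x)=4(b-x)/(b-a)$, whose equilibrium measure on $[a,b]$ is precisely $\frac{2}{\pi(b-a)}\sqrt{\frac{x-a}{b-x}}\,dx$; this comparison should produce the integrand $(V(x)-V_0(x))\bigl(\tfrac{2}{\pi(b-a)}+\psi(x)\bigr)\sqrt{\tfrac{x-a}{b-x}}$ together with the additive constant $\log\tfrac{b-a}{4}-\tfrac{3}{2}$ appearing in the statement. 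The careful but routine bookkeeping needed to track all Jacobians and normalizations through this reduction, and to verify preservation of the $O(\log n/n)$ error estimate from \cite{CG21}, is the reason the derivation is postponed to Appendix~\ref{Appendix_general results on Hankel}.
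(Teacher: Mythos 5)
Your proposal follows essentially the same route as the paper's Appendix~\ref{Appendix_general results on Hankel}: reduce to \cite[Theorem 1.2]{CG21} by an affine change of variables (the paper organizes this as the reflection $x_j\mapsto -x_j$ followed by the rescale-and-shift $x_j\mapsto \frac{a+b}{2}+\frac{b-a}{2}x_j$, whereas you combine the reflection and translation into $x\mapsto b-x$), then use \cite[Remark 1.4]{Ch19} to track how the equilibrium density and Euler--Lagrange constant transform, and finally simplify. Two small points would need attention in a complete write-up. First, the single substitution $x\mapsto b-x$ lands on the support $[0,b-a]$, which is not the normalized interval used in \cite[Theorem 1.2]{CG21}; a further dilation is still required, and it is this step that generates the $\log\frac{b-a}{4}$ and the $(\frac{b-a}{2})^{n^2}$ Jacobian factor via \eqref{L to Ltilde}--\eqref{A2}. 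Second, the claim that Assumption~\ref{ass:W} "transfers directly" to the hypotheses on $W$ in \cite[Theorem 1.2]{CG21} is not quite right: the hypotheses there (local H\"older continuity of $W$ on all of $\mathcal{I}$ and $W(x)=O(V(x))$ at infinity) are genuinely different from the three conditions of Assumption~\ref{ass:W}, and the paper has to argue separately, invoking \cite[Section 1.3]{CFWW25}, that the expansion persists under the weaker Assumption~\ref{ass:W}. Neither point changes the architecture of the argument, but both require explicit justification.
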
 

Extending \eqref{def of Ln}, we define 
\begin{equation}
L_{n}(V,W,\alpha) := \frac{1}{n!}\int_{\mathcal{I}}\ldots\int_{\mathcal{I}}\prod_{1\leq j < k \leq n} (x_{k}-x_{j})^{2} \prod_{j=1}^{n}(x_j-a)^\alpha e^{-nV(x_{j})}e^{W(x)} \, dx_{j}. 
\end{equation}
The following can be similarly derived from \cite[Theorem 1.3]{CG21}.

\begin{thm}[\textbf{Partition function expansion: two hard edges in the equilibrium measure}]\label{thm:general a and b Jacobi-type}
Let $\alpha>-1$, $b>a$, $\mathcal{I}=[a,b]$, and $V:\mathcal{I}\to \R$ be a one-cut regular potential whose equilibrium measure is of the form 
\begin{equation} \label{def of eq msr general two hard edge}
d\mu(x)= \frac{\psi(x)}{\sqrt{(b-x)(x-a)}} \,\mathbbm{1}_{ [a,b] }(x).
\end{equation}
Then as $n \to \infty$, we have 
	\begin{equation}
		L_{n}(V,W=0,\alpha) = \exp\bigg( C_{1} n^{2} + C_{2} n + C_{3} \log n + C_{4} +  O \Big( \frac{\log n}{n} \Big) \bigg),
	\end{equation}
	where
	\begin{align*}
		& C_{1}   = \log \frac{b-a}{4} - \frac{1}{2} \int_{a}^{b} V(x)\Big( \frac{1}{\pi} + \psi(x) \Big)\frac{dx}{\sqrt{(b-x)(x-a)}}, \\
		& C_{2} = \alpha \log \frac{b-a}{2} + \log(2\pi) - \alpha \log 2 - \frac{\alpha}{2\pi}\int_{a}^{b} \frac{V(x)}{\sqrt{(x-a)(b-x)}} \, dx + \frac{\alpha}{2}V(a) \\ 
		& C_{3} = - \frac{1}{4}+\frac{\alpha^{2}}{2}, \\
		& C_{4} = 3\zeta^{\prime}(-1) + \frac{\log 2}{12} - \frac{1}{8}\log \big( \pi\psi(b) \big) - \frac{1-4\alpha^{2}}{8}\log \big( \pi\psi(a) \big) + \frac{\alpha}{2}\log(2\pi) - \frac{\alpha^{2}}{2}\log 2 -\log G(1+\alpha).
	\end{align*}
\end{thm}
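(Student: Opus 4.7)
The plan is to parallel the derivation of Theorem~\ref{thm:general a and b} carried out in Appendix~\ref{Appendix_general results on Hankel}, but now appealing to \cite[Theorem 1.3]{CG21} (which treats the Jacobi-type setting with two hard edges) in place of \cite[Theorem 1.2]{CG21}. As in the soft/hard case, the strategy is threefold: (i) perform an affine substitution that brings $L_n(V,0,\alpha)$ into the canonical form treated in \cite{CG21}; (ii) invoke the general theorem there, combined with \cite[Remark 1.4]{Ch19} to adjust normalization conventions; and (iii) translate the resulting expansion back to the original variables.

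For step (i), I would apply the substitution $x_j = \tfrac{b-a}{2}y_j + \tfrac{a+b}{2}$, which maps $[a,b]$ onto a canonical interval such as $[-1,1]$. This produces a prefactor $\bigl(\tfrac{b-a}{2}\bigr)^{n^2+\alpha n}$ coming from the Vandermonde factor and the hard-edge weight $\prod_j (x_j-a)^\alpha$, and yields a transformed potential $\tilde V(y)=V(x(y))$ whose equilibrium measure has density $\frac{\tilde\psi(y)}{\sqrt{1-y^2}}$ with $\tilde\psi(y)=\psi(x(y))$, so in particular $\tilde\psi(-1)=\psi(a)$ and $\tilde\psi(1)=\psi(b)$. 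After step (ii), the asymptotic expansion of the rescaled determinant is expressed in terms of $\tilde V$, $\tilde\psi$, $\alpha$ and the canonical endpoints $\pm 1$. Step (iii) consists in recombining these pieces with the prefactor above: the $\log\tfrac{b-a}{2}$ terms from the Vandermonde combine with the $\log 2$ produced by the canonical constants to yield $\log\tfrac{b-a}{4}$ in $C_1$; the $\alpha\log\tfrac{b-a}{2}$ contribution from the hard-edge weight gives the corresponding term in $C_2$; the logarithmic coefficient $C_3=-\tfrac14+\tfrac{\alpha^2}{2}$ is scale-invariant and can be read off directly; and the endpoint contributions in $C_4$, including the asymmetric terms $-\tfrac{1}{8}\log(\pi\psi(b))$ versus $-\tfrac{1-4\alpha^2}{8}\log(\pi\psi(a))$ and the Barnes-$G$ factor $-\log G(1+\alpha)$, encode the Selberg-type normalization at the hard edge that carries the nontrivial exponent.

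The main obstacle is the careful bookkeeping of the constants. The factor $\prod_j(x_j-a)^\alpha$ produces contributions at several places in the expansion---explicit powers of $\tfrac{b-a}{2}$, boundary evaluations such as $\tfrac{\alpha}{2}V(a)$ arising from the hard-edge treatment in \cite{CG21}, the rational integral $-\tfrac{\alpha}{2\pi}\int_a^b V(x)\,\frac{dx}{\sqrt{(x-a)(b-x)}}$, and Selberg-type normalizations generating the Barnes-$G$ term---and these have to be reassembled precisely to match the stated formula. In addition, the asymmetry between the two endpoints (only the left endpoint carries the factor $(x-a)^\alpha$) must be tracked throughout, breaking the $a \leftrightarrow b$ symmetry visible in Theorem~\ref{thm:general a and b Jacobi-type} when $\alpha=0$. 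Once these bookkeeping issues are resolved, the derivation is otherwise a direct transcription of the one already performed for Theorem~\ref{thm:general a and b}.
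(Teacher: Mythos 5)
Your proposal is correct and follows essentially the same route as the paper: an affine substitution $x_j = \tfrac{b-a}{2}y_j + \tfrac{a+b}{2}$ mapping $[a,b]$ to $[-1,1]$, invocation of \cite[Theorem 1.3]{CG21} on the transformed determinant, and bookkeeping of the prefactor $\bigl(\tfrac{b-a}{2}\bigr)^{n^2+\alpha n}$ together with $\widetilde\psi(y)=\psi\bigl(\tfrac{a+b}{2}+\tfrac{b-a}{2}y\bigr)$ and $\widetilde\ell = \ell + 2\log\tfrac{b-a}{2}$. The paper's Appendix~\ref{Appendix_general results on Hankel} does exactly this, so there is no gap in your plan.
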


\subsection{Proof of Theorem \ref{Thm_JUE LDP} (b)}

We first establish a few useful identities, which are presented in the next two lemmas.

\begin{lem}\label{lemma:some identities 1}
Let $W(x):=t \log x$, $t\in \R$, and let $b>a>0$. The following relations hold:
\begin{align}
& \int_{a}^{b} \frac{  W(x)  }{x}\frac{\sqrt{x-a}}{\sqrt{b-x}} \, dx = 2\pi t \bigg(   \log\Big(\frac{\sqrt{a}+\sqrt{b}}{2}\Big) +    \frac{\sqrt{a}}{\sqrt{b}}   \log\Big(\frac{\sqrt{a}+\sqrt{b}}{ 2\sqrt{ab} } \Big) \bigg),  \label{lol6} 
\\
& \int_{a}^{b} \frac{  W(x)  }{1-x}\frac{\sqrt{x-a}}{\sqrt{b-x}} \, dx = 2 \pi t \bigg(  \log \Big( \frac{2}{ \sqrt{a}+\sqrt{b} } \Big) +  \frac{\sqrt{1-a}}{\sqrt{1-b}} \log \Big( \frac{ \sqrt{a(1-b)} +\sqrt{b(1-a)}  }{  \sqrt{1-a}+\sqrt{1-b}   } \Big) \bigg). \label{lol7}
\end{align}
\end{lem}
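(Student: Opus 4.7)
Both identities are evaluations of the common integral
\[
K(c) := \int_a^b \frac{\log x}{x-c}\sqrt{\tfrac{x-a}{b-x}}\, dx,
\]
with \eqref{lol6} corresponding to $t\cdot K(0)$ and \eqref{lol7} to $-t\cdot K(1)$; by linearity in $t$ I set $t=1$. The plan is to obtain an explicit formula for $K(c)$ valid for $c > 0$ with $c \notin [a,b]$, and then to specialize it to $c=1$ and to the limit $c \to 0^+$.

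First I would derive the representation
\[
K(c) = \pi \log c \cdot (1 - h(c)) - \pi \int_0^\infty \frac{\sqrt{(a+y)/(b+y)} - 1}{y+c}\, dy \qquad (\star)
\]
where $h(z) = \sqrt{(z-a)/(z-b)}$ has cut on $[a,b]$ and tends to $1$ at infinity. This I obtain by applying the residue theorem to $F(z) := h(z) \log z/(z-c)$ (with $\log$ having principal cut $(-\infty, 0]$) on a large-circle contour plus a keyhole around $(-\infty, 0]$: the jump of $h$ across $[a,b]$ turns the small-loop contour integral around $[a,b]$ into $2i\,K(c)$, the residue at $z=c$ produces the first term on the right of $(\star)$, and the $\log R$ divergences coming from the large circle and from the $x\to -\infty$ tail of the keyhole integral cancel, leaving the convergent integral appearing in $(\star)$. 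I would then evaluate this integral by the substitution $s = \sqrt{(a+y)/(b+y)}$, which bijects $[0,\infty)$ onto $[\sqrt{a/b}, 1)$ with a rational Jacobian, converting it into
\[
-2(b-a) \int_{\sqrt{a/b}}^1 \frac{s \, ds}{(1+s)\bigl[(b-c)s^2 + (c-a)\bigr]},
\]
a rational integral amenable to partial fractions via the factorization $(b-c)s^2 + (c-a) = (b-c)(s - v_c)(s + v_c)$ with $v_c := \sqrt{(a-c)/(b-c)}$.

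For \eqref{lol7} (the case $c=1$), the first term of $(\star)$ vanishes since $\log 1 = 0$, and $v_1 = \sqrt{(1-a)/(1-b)} > 1$ lies outside the integration interval, so the partial-fraction antiderivative evaluates cleanly at both endpoints; pairing the boundary contributions and invoking $(\sqrt{b}-\sqrt{a})(\sqrt{b}+\sqrt{a}) = b-a$ together with the analogous identity for $1-a$ and $1-b$ yields the right-hand side of \eqref{lol7} directly. For \eqref{lol6} (the case $c=0$), both terms in $(\star)$ individually diverge because $v_c \to v_0 := \sqrt{a/b}$ meets the left endpoint of the $s$-integration, producing a $\log(v_0 - v_c)$ divergence in the partial-fraction evaluation. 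Using the first-order expansion $v_0 - v_c = c(b-a)/(2b^2 v_0) + O(c^2)$, the coefficients of $\log c$ from both divergent pieces turn out to be $\pm \pi(1-v_0)$ and cancel exactly; the finite remainder, rewritten in terms of $\sqrt{a}$ and $\sqrt{b}$, then matches \eqref{lol6}. The main obstacle is precisely this bookkeeping in the $c \to 0^+$ limit: roughly half a dozen logarithmic expressions arising from the three partial-fraction summands at both endpoints must be expanded to first order in $c$, their divergent contributions shown to combine into exactly the cancelling $-\pi(1-v_0)\log c$, and the surviving finite pieces simplified into the compact two-term form displayed on the right of \eqref{lol6}.
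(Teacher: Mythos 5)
Your argument is correct and takes a genuinely different, more unified route than the paper's. The paper also proceeds by contour deformation---it rewrites \eqref{lol6} as a loop integral of $\frac{W(z)}{z}\bigl(\frac{\sqrt{z-a}}{\sqrt{z-b}}-1\bigr)$ around $[a,b]$, pushes the loop to a large circle, a small clockwise circle of radius $\epsilon$ around the origin, and the two sides of $(-\infty,-\epsilon)$, and cancels the resulting $\log\epsilon$ divergences as $\epsilon\to0$---but it only carries this out for \eqref{lol6}, declaring \eqref{lol7} ``similar (albeit slightly more involved)'' and omitting it. You instead regularize by moving the pole to $z=c>0$ and derive the single formula $(\star)$, from which \eqref{lol7} falls out at $c=1$ with no limit needed (since $\log1=0$ and $v_1>1$ sits outside $[v_0,1]$), while \eqref{lol6} follows from $c\to0^{+}$ after verifying the cancellation of the two $\log c$ divergences, which indeed occurs since $\tfrac{b-a}{b(1+v_{0})}=1-v_{0}$. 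Your substitution $s=\sqrt{(a+y)/(b+y)}$, turning the remaining real integral into a rational one amenable to partial fractions, is a clean algorithmic replacement for the paper's appeal to ``explicit primitives.'' The net trade-off is that your route delivers both identities from one computation and is more systematic, at the modest cost of the extra first-order bookkeeping in the $c\to0^{+}$ limit that you describe. One small slip in your narration: the residue at $z=c$ contributes only $-\pi h(c)\log c$, not the whole first term $\pi(1-h(c))\log c$; the remaining $+\pi\log c$ is the $\log c$ left over in the keyhole integral of $1/(y+c)$ after the $\log R$ pieces from the large circle and the keyhole tail cancel. Your formula $(\star)$ itself is nonetheless correct.
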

\begin{proof}
Since $W$ is analytic in a neighbourhood of $(a,b)$,
\begin{align*}
\int_{a}^{b}   \frac{ W(x) }{x}\frac{\sqrt{x-a}}{\sqrt{b-x}} \, dx = \frac{1}{2i}\oint_{\mathcal{C}}  \frac{ W(z) }{z}\frac{\sqrt{z-a}}{\sqrt{z-b}} \, dz = \frac{1}{2i}\oint_{\mathcal{C}} \frac{ W(z) }{z}\bigg( \frac{\sqrt{z-a}}{\sqrt{z-b}} - 1 \bigg)\, dz,
\end{align*}
where the principal branch is used for the roots, and $\mathcal{C}$ is a closed loop oriented anticlockwise and surrounding $(a,b)$ but not $0$. We next deform the contour $\mathcal{C}$ into $C(0,M)\cup (-C(0,\epsilon)) \cup \big((-M,-\epsilon)+i0_{+}\big) \cup \big((-\epsilon,-M)-i0_{+}\big)$, where $M>b$ and $\epsilon \in (0,a)$ are arbitrary and given $z_{0}\in \C$ and $R>0$, $C(z_{0},R)$ denotes the circle centred at $z_{0}$ of radius $R$ oriented counterclockwise. See Figure~\ref{Fig_contour deform} for an illustration.  

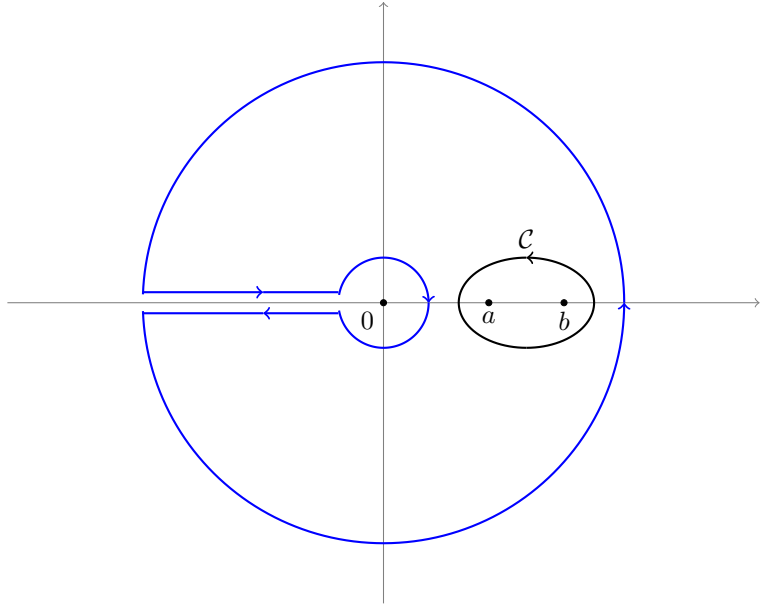
\begin{figure}[t]
\begin{center}

\begin{tikzpicture}[scale=2]

% Parameters
\def\e{0.3}    % small radius (epsilon < a)
\def\R{1.6}    % large radius (R > b)
\def\a{0.7}
\def\b{1.2}
\def\h{0.07}
\def\xr{0.45} % x radius
\def\yr{0.3} % y radius
\def\xc{(\a + \b)/2} % x center

% Axes
\draw[->, gray] (-2.5, 0) -- (2.5, 0); %node[right] {\(\operatorname{Re} z\)};
\draw[->, gray] (0, -2) -- (0, 2); %node[above] {\(\operatorname{Im} z\)};

\draw[thick, blue, ->] (-\R, \h) -- (-\R/2, \h);   % upper connecting line
\draw[thick, blue] (-\R/2, \h) -- (-\e, \h);

\draw[thick, blue, ->] (-\e, -\h) -- (-\R/2, -\h); % lower connecting line
\draw[thick, blue] (-\R/2, -\h) -- (-\R, -\h); % lower connecting line

 \draw[thick, blue, ->, domain=170:0, samples=100] 
    plot ({\e*cos(\x)}, {\e*sin(\x)});

\draw[thick, blue, domain=0:-170, samples=100] 
    plot ({\e*cos(\x)}, {\e*sin(\x)});

 \draw[thick, blue, ->, domain=-178:0, samples=100] 
    plot ({\R*cos(\x)}, {\R*sin(\x)});

 \draw[thick, blue, domain=0:178, samples=100] plot ({\R*cos(\x)}, {\R*sin(\x)});

%\draw[thick, red, ->]    ({(\a + \b)/2}, 0) ellipse [x radius={(\b - \a)/2 + 0.3}, y radius=0.25];

% Overlay a partial red arc with arrow
\draw[thick, ->] 
    plot[domain=-90:90, samples=50] 
    ({\xc + \xr*cos(\x)}, {\yr*sin(\x)});

\draw[thick] 
    plot[domain=90:270, samples=50] 
    ({\xc + \xr*cos(\x)}, {\yr*sin(\x)});

% Points on real axis
%\filldraw (0, 0) circle (0.02) node[below left] {\(0\)};
\filldraw (\a, 0) circle (0.02) node[below] {\(a\)};
\filldraw (\b, 0) circle (0.02) node[below] {\(b\)};

\filldraw (0, 0) circle (0.02) node[below left] {\(0\)};

\node at ({\xc}, {\yr + 0.125}) {\(\mathcal{C}\)};

\end{tikzpicture}
\end{center}
    \caption{Illustration of the contour deformation used in the proof of Lemma \ref{lemma:some identities 1}.}
    \label{Fig_contour deform}
\end{figure}

Since the integrand is $O(z^{-2}\log z)$ as $z\to \infty$, the contribution along $C(0,M)$ tends to $0$ as $M\to +\infty$. Hence, for any $\epsilon \in (0,a)$,
\begin{align}\label{lol5}
\int_{a}^{b}  \frac{ W(x) }{x}\frac{\sqrt{x-a}}{\sqrt{b-x}} \, dx = \pi t \int_{-\infty}^{-\epsilon} \frac{1}{x}\bigg( \frac{\sqrt{x-a}}{\sqrt{x-b}} -1\bigg) \, dx - \frac{t}{2}\int_{-\pi}^{\pi} \log(\epsilon e^{i\theta}) \bigg( \frac{\sqrt{a-\epsilon e^{i\theta}}}{\sqrt{b-\epsilon e^{i\theta}}} - 1 \bigg)\, d\theta.
\end{align}
The first integral in \eqref{lol5} can be evaluated explicitly using primitives, and as $\epsilon \to 0$ we find
\begin{align*}
\int_{-\infty}^{-\epsilon} \frac{1}{x}\bigg( \frac{\sqrt{x-a}}{\sqrt{x-b}} -1\bigg) \, dx = \bigg( \frac{\sqrt{a}}{\sqrt{b}} - 1 \bigg) \log \epsilon  -\log 4 - \frac{\sqrt{a}}{\sqrt{b}}\log(4ab) + 2\Big( 1 + \frac{\sqrt{a}}{\sqrt{b}} \Big) \log(\sqrt{a}+\sqrt{b}) + O(\epsilon).
\end{align*}
For the second integral in \eqref{lol5}, we use
\begin{align*}
\log(\epsilon e^{i\theta}) \bigg( \frac{\sqrt{a-\epsilon e^{i\theta}}}{\sqrt{b-\epsilon e^{i\theta}}} - 1 \bigg) = \bigg( \frac{\sqrt{a}}{\sqrt{b}} - 1 \bigg) (\log \epsilon + i \theta) + O(\epsilon \log \epsilon), \qquad \mbox{as } \epsilon \to 0,
\end{align*}
uniformly for $\theta \in (-\pi,\pi)$ to deduce that
\begin{align*}
\int_{-\pi}^{\pi} \log(\epsilon e^{i\theta}) \bigg( \frac{\sqrt{a-\epsilon e^{i\theta}}}{\sqrt{b-\epsilon e^{i\theta}}} - 1 \bigg) \, d\theta = 2\pi \bigg( \frac{\sqrt{a}}{\sqrt{b}} - 1 \bigg)\log \epsilon + O(\epsilon \log \epsilon), \qquad \mbox{as } \epsilon \to 0.
\end{align*}
Substituting the above in \eqref{lol5} and taking $\epsilon \to 0$, we find \eqref{lol6}. The proof of \eqref{lol7} is similar (albeit slightly more involved) and we omit it.
\end{proof}
\begin{lem}\label{lemma:some identities 2}
Let $W(x):=t \log x$, $t\in \R$, and let $b>a>0$. The following relation hold:
\begin{align}\label{lol9}
\int_{a}^{b}  \frac{W(x)}{\sqrt{(b-x)(x-a)}} \bigg(\dashint_{a}^{b} \frac{W^{\prime}(y)\sqrt{(b-y)(y-a)}}{x-y} \, dy \bigg) \,dx = (2\pi t)^{2} \log \bigg( \frac{\sqrt{a}+\sqrt{b}}{2 (ab)^{\frac{1}{4}}} \bigg) . 
\end{align}
\end{lem}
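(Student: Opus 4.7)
\medskip
\noindent\textbf{Proof plan.} The plan is to exploit the specific form $W'(y)=t/y$ (coming from $W(x)=t\log x$) to decompose the inner kernel $\sqrt{(b-y)(y-a)}/(y(x-y))$ via the partial-fraction identity
\begin{equation*}
\frac{1}{y(x-y)} = \frac{1}{x}\Bigl(\frac{1}{y}+\frac{1}{x-y}\Bigr).
\end{equation*}
This splits the inner principal value integral into a convergent piece $\int_{a}^{b}\sqrt{(b-y)(y-a)}/y\,dy$ and a Cauchy PV $\dashint_{a}^{b}\sqrt{(b-y)(y-a)}/(x-y)\,dy$. The first one I would evaluate by the trigonometric substitution $y=\tfrac{a+b}{2}+\tfrac{b-a}{2}\cos\theta$ (or, alternatively, by contour-deforming the function $\sqrt{(z-a)(z-b)}/z$ in the spirit of the proof of Lemma~\ref{lemma:some identities 1}), which produces $\pi\bigl(\tfrac{a+b}{2}-\sqrt{ab}\bigr)$. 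The second is the classical Tricomi formula $\pi\bigl(x-\tfrac{a+b}{2}\bigr)$, which one verifies by Sokhotski--Plemelj applied to $f(z)=\sqrt{(z-a)(z-b)}$ with branch cut on $[a,b]$ and $f(z)\sim z$ at infinity. The two contributions combine cleanly into
\begin{equation*}
\dashint_{a}^{b}\frac{W'(y)\sqrt{(b-y)(y-a)}}{x-y}\,dy = t\pi\Bigl(1-\frac{\sqrt{ab}}{x}\Bigr).
\end{equation*}

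Next I would substitute this back, so that the outer integral becomes $t^{2}\pi(I_{1}-\sqrt{ab}\,I_{2})$ with
\begin{equation*}
I_{1}:=\int_{a}^{b}\frac{\log x}{\sqrt{(b-x)(x-a)}}\,dx, \qquad I_{2}:=\int_{a}^{b}\frac{\log x}{x\sqrt{(b-x)(x-a)}}\,dx.
\end{equation*}
For $I_{1}$, the trigonometric substitution above reduces the integral to $\int_{0}^{\pi}\log(A+B\cos\theta)\,d\theta=\pi\log\tfrac{A+\sqrt{A^{2}-B^{2}}}{2}$ with $A=(a+b)/2$ and $\sqrt{A^{2}-B^{2}}=\sqrt{ab}$, giving $I_{1}=2\pi\log\tfrac{\sqrt{a}+\sqrt{b}}{2}$. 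For $I_{2}$, the change of variables $u=1/x$ (legitimate since $a>0$) maps $[a,b]$ onto $[1/b,1/a]$ and rewrites $I_{2}$ as an integral of the same shape as $I_{1}$ but with $(a,b)$ replaced by $(1/b,1/a)$; invoking the $I_{1}$ formula on the transformed integral then produces $I_{2}=\tfrac{2\pi}{\sqrt{ab}}\log\tfrac{2\sqrt{ab}}{\sqrt{a}+\sqrt{b}}$.

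Finally, the combination $I_{1}-\sqrt{ab}\,I_{2}=2\pi\log\tfrac{\sqrt{a}+\sqrt{b}}{2}+2\pi\log\tfrac{\sqrt{a}+\sqrt{b}}{2\sqrt{ab}}=4\pi\log\tfrac{\sqrt{a}+\sqrt{b}}{2(ab)^{1/4}}$, and multiplying by $t^{2}\pi$ produces the desired right-hand side $(2\pi t)^{2}\log\tfrac{\sqrt{a}+\sqrt{b}}{2(ab)^{1/4}}$. The main delicacy I expect to encounter is sign-tracking: the PV formula must give $\pi(x-(a+b)/2)$ (and not the opposite, which one can obtain from a careless Sokhotski--Plemelj argument), and the orientation-reversing substitution $u=1/x$ produces a sign that has to be absorbed correctly so that $I_{2}$ comes out with a logarithm of $2\sqrt{ab}/(\sqrt{a}+\sqrt{b})$ rather than its reciprocal; combined with the $-\sqrt{ab}$ in front of $I_{2}$, these conspire so the two logarithms add rather than cancel.
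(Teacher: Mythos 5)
Your proposal is correct, and both the inner-integral computation and the final answer agree with the paper. However, your route is genuinely different. The paper computes the inner Hilbert transform by a single contour deformation (reducing to residues at $0$ and $\infty$) and then, after substituting back, evaluates the outer integral $\frac{\pi t^2}{2i}\oint_{\mathcal{C}}\frac{\log x}{\sqrt{x-b}\sqrt{x-a}}\bigl(1-\frac{\sqrt{ab}}{x}\bigr)\,dx$ by deforming $\mathcal{C}$ to a keyhole around the negative reals and carefully tracking the $\epsilon\to 0$, $M\to\infty$ limits. You instead stay on the real line throughout: partial fractions splits the inner kernel into a regular integral (done by the trigonometric substitution, giving $\pi(\tfrac{a+b}{2}-\sqrt{ab})$) plus a classical Tricomi principal value (giving $\pi(x-\tfrac{a+b}{2})$), and the outer integral splits into $I_1$ and $I_2$, with $I_1$ reduced to the standard integral $\int_0^\pi\log(A+B\cos\theta)\,d\theta$ and $I_2$ obtained from $I_1$ by the involution $u=1/x$. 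The paper's contour method is more uniform and dovetails with the analogous deformations in Lemma~\ref{lemma:some identities 1}; your approach is more elementary, avoids the delicate $\epsilon\log\epsilon$ cancellations in the keyhole limits, and nicely exploits the inversion symmetry $x\mapsto 1/x$ that is special to the weight $\log x$. Your flagged worry about signs (the Tricomi PV must come out as $\pi(x-\tfrac{a+b}{2})$, and the orientation reversal under $u=1/x$ must flip the interior of the logarithm in $I_2$ so that the $-\sqrt{ab}\,I_2$ term adds to $I_1$ rather than cancelling) is exactly the right place to be careful, and your plan handles it correctly.
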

\begin{proof}
Let $x\in (a,b)$ be fixed. Since $W'(y)=t/y$, we have
\begin{align}\label{lol8}
\dashint_{a}^{b} \frac{W^{\prime}(y)\sqrt{(b-y)(y-a)}}{x-y} \, dy = \frac{-t}{2i}\oint_{\mathcal{C}} \frac{1}{y}\frac{\sqrt{y-b}\sqrt{y-a}}{x-y} \, dy = \pi t \bigg( 1-\frac{\sqrt{ab}}{x} \bigg),
\end{align}
where the principal branch is used for the roots, and $\mathcal{C}$ is a closed loop oriented anticlockwise and surrounding $(a,b)$ but not $0$. The last equality in \eqref{lol8} follows after deforming $\mathcal{C}$ towards $\infty$ and using the residues
\begin{align*}
\mbox{Res}\bigg( \frac{1}{y}\frac{\sqrt{y-b}\sqrt{y-a}}{x-y};y=\infty \bigg) = 1, \qquad \mbox{Res}\bigg( \frac{1}{y}\frac{\sqrt{y-b}\sqrt{y-a}}{x-y};y=0 \bigg) = -\frac{\sqrt{ab}}{x}.
\end{align*}
Substituting \eqref{lol8} in the left-hand side of \eqref{lol9} yields
\begin{align*}
\int_{a}^{b}  \frac{W(x)}{\sqrt{(b-x)(x-a)}} \bigg(\dashint_{a}^{b} \frac{W^{\prime}(y)\sqrt{(b-y)(y-a)}}{x-y} \, dy \bigg) \, dx = \frac{\pi t^{2}}{2i}\oint_{\mathcal{C}} \frac{\log x}{\sqrt{x-b}\sqrt{x-a}} \bigg( 1 - \frac{\sqrt{ab}}{x} \bigg) \, dx.
\end{align*}
As in the previous lemma, by deforming the contour $\mathcal{C}$ into $C(0,M)\cup (-C(0,\epsilon)) \cup \big((-M,-\epsilon)+i0_{+}\big) \cup \big((-\epsilon,-M)-i0_{+}\big)$, where $M>b$ and $\epsilon \in (0,a)$ are arbitrary (see Figure~\ref{Fig_contour deform}), we obtain 
\begin{align*}
\int_{a}^{b}  \frac{W(x)}{\sqrt{(b-x)(x-a)}} \bigg(\dashint_{a}^{b} \frac{W^{\prime}(y)\sqrt{(b-y)(y-a)}}{x-y} \, dy \bigg) \, dx = \pi^{2}t^{2} \big( I_{\epsilon,M} + I_{\epsilon} + I_{M}\big),
\end{align*}
where 
\begin{align*}
&  I_{\epsilon,M} = \int_{-M}^{-\epsilon} \bigg( 1-\frac{\sqrt{ab}}{x} \bigg) \frac{dx}{\sqrt{x-b}\sqrt{x-a}},
\\
& I_{\epsilon} = \epsilon \int_{-\theta}^{\theta}\frac{\log(\epsilon e^{i\theta})}{\sqrt{b-\epsilon e^{i\theta}}\sqrt{a-\epsilon e^{i\theta}}} \bigg( 1-\frac{\sqrt{ab}}{\epsilon e^{i\theta}} \bigg)   \, e^{i\theta}\frac{d\theta}{2\pi}, 
\\
& I_{M} = M \int_{-\theta}^{\theta}\frac{\log(M e^{i\theta})}{\sqrt{M e^{i\theta}-b}\sqrt{M e^{i\theta}-a}} \bigg( 1-\frac{\sqrt{ab}}{M e^{i\theta}} \bigg)   e^{i\theta} \, \frac{d\theta}{2\pi}.
\end{align*}
As $M\to +\infty$,
\begin{align*}
I_{M} = \int_{-\theta}^{\theta} \Big( \log M + i \theta + O(M^{-1}\log M) \Big)\frac{d\theta}{2\pi} = \log M +  O(M^{-1}\log M),
\end{align*}
where we have used that the first $O$-term above is uniform for $\theta \in (-\pi,\pi)$. Similarly, as $\epsilon \to 0$, 
\begin{align*}
I_{\epsilon} = \int_{-\theta}^{\theta} \Big( -\log \epsilon - i \theta + O(\epsilon \log \epsilon) \Big) \frac{d\theta}{2\pi} = -\log \epsilon + O(\epsilon \log \epsilon),
\end{align*}
where we have used that the first $O$-term above is uniform for $\theta \in (-\pi,\pi)$. The integral $I_{\epsilon,M}$ can be evaluated explicitly using primitives. As $M\to + \infty$, we obtain
\begin{align*}
I_{\epsilon,M} = \log \bigg( \frac{(a+b+2\sqrt{ab})\epsilon}{2ab+(a+b)\epsilon + 2\sqrt{ab(a+\epsilon)(b+\epsilon)}} \bigg) - \log \bigg( \frac{4M}{a+b+2(\epsilon + \sqrt{(a+\epsilon)(b+\epsilon)})} \bigg) + O(M^{-1}),
\end{align*}
where the error term is independent of $\epsilon$. Hence
\begin{align*}
\lim_{\epsilon\to 0}\lim_{M\to +\infty} \big( I_{\epsilon,M} + \log M - \log \epsilon \big) = \lim_{\epsilon\to 0}\lim_{M\to +\infty} \big( I_{\epsilon,M} + I_{M} - I_{\epsilon} \big) = 4 \log \bigg( \frac{\sqrt{a}+\sqrt{b}}{2 (ab)^{\frac{1}{4}}} \bigg) ,
\end{align*} 
and the claim follows.
\end{proof}

We are now in a position to prove Theorem ~\ref{Thm_JUE LDP} (b).

\begin{proof}[Proof of Theorem~\ref{Thm_JUE LDP} (b)]
By \eqref{juepdf} with $\lambda_1=\alpha n+t$ and $\lambda_2= \beta n$, we have 
\begin{align}\label{lol1}
\mathbb{P} \Big( x_{ \rm max }^{ (  n )  }(\alpha n +t, \beta n ) < d \Big) = \frac{1}{Z_{n}(\alpha n+t,\beta n) }  \int_{0}^{d} \ldots \int_{0}^{d} \prod_{1\leq i < j \leq n} (x_{j}-x_{i})^{2} \prod_{j=1}^{n}e^{-nV(x_{j})}e^{W(x_{j})} \, dx_{j},
\end{align}
where $V$ is given by \eqref{def of V} and $W(x) := t \log x$.  
Recall that the associated equilibrium measure $\mu \equiv \mu_V$ is given by \eqref{def of mu d small}, where the analytic part of the density function $\psi$ is also defined.

Proposition \ref{Prop_constrained Wacther} (b2) shows that the equilibrium measure $\mu_{V}$ has one soft edge and one hard edge. Hence, we want to apply Theorem \ref{thm:general a and b} with $\mathcal{I}=(-\infty,d]$.
The potential $V$ is defined for $x\in (0,1)$ by \eqref{def of V}, and we extend it to $(-\infty,d]$ by setting $V(x) = +\infty$ if $x \leq 0$. Strictly speaking, $V$ does not satisfy the assumptions of Theorem \ref{thm:general a and b} because it is discontinuous at $x=0$. However, since $V$ is analytic in a neighbourhood of $(\mathfrak{m},d)$, the proof of \cite[Theorem 1.2]{CG21} shows that Theorem \ref{thm:general a and b} still applies to our potential $V$. Moreover, $W$ also satisfies the assumptions of Theorem \ref{thm:general a and b}. Thus by combining  Proposition \ref{Prop_constrained Wacther} (b2) and Theorem \ref{thm:general a and b}, we obtain 
\begin{align}\label{lol2}
\frac{1}{n!} \int_{0}^{d} \ldots \int_{0}^{d} \prod_{1\leq i < j \leq n} (x_{j}-x_{i})^{2} \prod_{j=1}^{n}e^{-nV(x_{j})}e^{W(x_{j})} \, dx_{j} = \exp\bigg(\widehat{D}_{1} n^{2} + \widehat{D}_{2} n + \widehat{D}_{3} \log n + \widehat{D}_{4} + O \Big( \frac{\log n}{n} \Big)\bigg)
\end{align}
as $n \to \infty$, where
\begin{align*}
& \widehat{D}_{1} = \log \frac{d-\mathfrak{m}  }{4} - \frac{3}{2} - \frac{1}{2} \int_{ \mathfrak{m} }^{ d } \bigg(V(x)-4\frac{d-x}{d-\mathfrak{m}} \bigg)\bigg( \frac{2}{(d-\mathfrak{m})\pi} + \psi(x) \bigg)\sqrt{\frac{x-\mathfrak{m}}{d-x}}\, dx, 
\\
& \widehat{D}_{2} = \log(2\pi) + \int_{\mathfrak{m}}^{d}W(x)\psi(x)\sqrt{\frac{x-\mathfrak{m}}{d-x}}\, dx, \nonumber 
\\
& \widehat{D}_{3} = - \frac{1}{6}, 
\\
& \widehat{ D }_{4} = 2\zeta^{\prime}(-1) - \frac{1}{8}\log \big( \tfrac{d- \mathfrak{m} }{2}\pi\psi(d) \big) - \frac{1}{24}\log \big( \tfrac{d- \mathfrak{m} }{2}\pi\psi(\mathfrak{m}) \big) \\
& \qquad + \frac{1}{4\pi^{2}}\int_{\mathfrak{m}}^{d}  \frac{W(x)}{\sqrt{(d-x)(x-\mathfrak{m})}} \bigg(\dashint_{\mathfrak{m}}^{d} \frac{W^{\prime}(y)\sqrt{(d-y)(y-\mathfrak{m})}}{x-y}\, dy \bigg) dx.
\end{align*}
Lemmas~\ref{lemma:some identities 1} and~\ref{lemma:some identities 2}, together with \eqref{def of mu d small}, allow us to simplify $\widehat{D}_{2}$ and $\widehat{D}_{4}$ as follows:
\begin{align*}
\widehat{D}_{2} & = \log(2\pi) + t \bigg( \alpha \frac{\sqrt{d}}{\sqrt{ \mathfrak{m} }} + \beta \frac{\sqrt{1-d}}{\sqrt{1-\mathfrak{m}}} \bigg) \log\frac{\sqrt{\mathfrak{m}}+\sqrt{d}}{2}  + t\alpha \log \bigg( \frac{1}{2\sqrt{\mathfrak{m}}}+\frac{1}{2\sqrt{d}} \bigg) - t \beta \log \frac{\sqrt{\mathfrak{m}(1-d)}+\sqrt{d(1-\mathfrak{m})}}{\sqrt{1-\mathfrak{m}}+\sqrt{1-d}}, \\
\widehat{D}_{4} & = 2\zeta^{\prime}(-1) - \frac{1}{8}\log \big( \tfrac{d- \mathfrak{m} }{2}\pi\psi(d) \big) - \frac{1}{24}\log \big( \tfrac{d- \mathfrak{m} }{2}\pi\psi(\mathfrak{m}) \big) + t^{2}  \log \bigg( \frac{\sqrt{\mathfrak{m}}+\sqrt{d}}{2(\mathfrak{m}d)^{\frac{1}{4}}}\bigg).
\end{align*}
Also, a long computation (similar to the proof of Lemma \ref{lemma:some identities 1}, so we omit details) allows to simplify the integral appearing in $\widehat{D}_{1}$ as follows
\begin{align*}
& \int_{\mathfrak{m}}^{d} \bigg(V(x)-4\frac{d-x}{d-\mathfrak{m} }\bigg)\bigg( \frac{2}{(d-\mathfrak{m})\pi} + \psi(x) \bigg)\sqrt{\frac{x-\mathfrak{m}}{d-x}}\, dx 
= -1 + \alpha \bigg( 1-\frac{\sqrt{d}}{\sqrt{\mathfrak{m}}} \bigg) + \beta \bigg( 1 - \frac{\sqrt{1-d}}{\sqrt{1-\mathfrak{m}}} \bigg)
\\
&\quad + \frac{\alpha^{2}}{2}\log(\mathfrak{m}d) + 2\alpha \beta \log \bigg( \frac{\sqrt{\mathfrak{m}(1-d)}+\sqrt{d(1-\mathfrak{m})}}{2} \bigg) + \frac{\beta^{2}}{2}\log \big( (1-\mathfrak{m})(1-d) \big) \nonumber \\
& \quad - \bigg( 2+\alpha+\beta+\alpha \frac{\sqrt{d}}{\sqrt{\mathfrak{m}}} + \beta \frac{\sqrt{1-d}}{\sqrt{1-\mathfrak{m}}} \bigg) \bigg( \alpha \log \frac{\sqrt{\mathfrak{m}}+\sqrt{d}}{2} + \beta \log \frac{\sqrt{1-\mathfrak{m}}+\sqrt{1-d}}{2} \bigg). \nonumber
\end{align*} 
Then by \eqref{eq of mathfrak m d}, this can be further simplified as 
\begin{align*}
\begin{split}
&\quad \int_{\mathfrak{m}}^{d} \bigg(V(x)-4\frac{d-x}{d-\mathfrak{m} }\bigg)\bigg( \frac{2}{(d-\mathfrak{m})\pi} + \psi(x) \bigg)\sqrt{\frac{x-\mathfrak{m}}{d-x}}\, dx
\\
&= -3+ \frac{\alpha^{2}}{2}\log(\mathfrak{m}d) + 2\alpha \beta \log \bigg( \frac{\sqrt{\mathfrak{m}(1-d)}+\sqrt{d(1-\mathfrak{m})}}{2} \bigg) + \frac{\beta^{2}}{2}\log \big( (1-\mathfrak{m})(1-d) \big) 
\\
&\quad  - 2(2+\alpha+\beta) \bigg( \alpha \log \frac{\sqrt{\mathfrak{m}}+\sqrt{d}}{2} + \beta \log \frac{\sqrt{1-\mathfrak{m}}+\sqrt{1-d}}{2} \bigg).
\end{split}
\end{align*}
This gives rise to 
\begin{align}
\begin{split} \label{widehat D1 evaluation}
\widehat{D}_{1} &=  \log \frac{d-\mathfrak{m}  }{4} - \alpha \beta \log \bigg( \frac{\sqrt{\mathfrak{m}(1-d)}+\sqrt{d(1-\mathfrak{m})}}{2} \bigg) - \frac{\alpha^{2}}{4}\log(\mathfrak{m}d) -   \frac{\beta^{2}}{4}\log \big( (1-\mathfrak{m})(1-d) \big) 
\\
&\quad + (2+\alpha+\beta) \bigg( \alpha \log \frac{\sqrt{\mathfrak{m}}+\sqrt{d}}{2} + \beta \log \frac{\sqrt{1-\mathfrak{m}}+\sqrt{1-d}}{2} \bigg).  
\end{split}
\end{align}
(It can be checked that $\widehat{D}_1= -S(\mathfrak{m},d)$, where $S(x, y)$ defined in \eqref{def of S(x,y)}.)

On the other hand, using \eqref{def of JUE partition function} and the asymptotic expansion \eqref{Barnes G asymp}, a direct computation yields
\begin{align}
\frac{1}{n!}\, Z_n( \alpha n+t,\beta n )  = \exp\bigg(\widetilde{D}_{1} n^{2} + \widetilde{D}_{2} n + \widetilde{D}_{3} \log n + \widetilde{D}_{4} +  O \Big( \frac{1}{n } \Big)\bigg), \qquad \mbox{as } n \to \infty,\label{lol3}
\end{align}
where 
\begin{align*}
& \widetilde{D}_{1} = \frac{(1+\alpha)^{2}}{2}\log(1+\alpha) + \frac{(1+\beta)^{2}}{2}\log(1+\beta) + \frac{(1+\alpha+\beta)^{2}}{2}\log(1+\alpha+\beta) \\
& \hspace{0.8cm} - \frac{\alpha^{2}}{2}\log \alpha - \frac{\beta^{2}}{2}\log \beta - \frac{(2+\alpha+\beta)^{2}}{2}\log(2+\alpha+\beta), 
\\
& \widetilde{D}_{2} = \log(2\pi) + t \Big( (1+\alpha)\log(1+\alpha) + (1+\alpha+\beta)\log(1+\alpha+\beta)  \\
& \hspace{0.8cm} - (2+\beta)\log(2+\alpha+\beta) - \alpha \log(\alpha(2+\alpha+\beta)) \Big), \nonumber \\
& \widetilde{D}_{3} = - \frac{1}{12}, \\
& \widetilde{D}_{4} = \zeta^{\prime}(-1) + \frac{1}{12}\log  \frac{\alpha \beta (2+\alpha+\beta)}{(1+\alpha)(1+\beta)(1+\alpha+\beta)} + \frac{t^{2}}{2}\log \frac{(1+\alpha)(1+\alpha+\beta)}{\alpha(2+\alpha+\beta)}.
\end{align*}
Substituting \eqref{lol2} and \eqref{lol3} into \eqref{lol1}, and simplifying, we obtain
\begin{equation}
L_k= \widehat{D}_k-\widetilde{D}_k, \qquad k=1,2,3,4,
\end{equation}
where $L_k$ is as in \eqref{lol4}.
\end{proof}
 
\subsection{Proof of Theorem \ref{Thm_JUE LDP weak} (b)}
As in the previous subsection, we start by proving some useful identities.
\begin{lem}\label{lemma:integrals with two hard edges}
Let $\beta>0$, $V(x):=- \beta \log(1-x)$, $d\in (0,1)$, and let $\psi$ be as in \eqref{def of mu d small N}. The following relations hold:
\begin{align}
& \int_{0}^{d}\frac{\log(1-x)}{\sqrt{x(d-x)}}dx = -2\pi \log \bigg( \frac{2}{1+\sqrt{1-d}} \bigg), \label{lol11} \\
& \int_{0}^{d}\frac{1}{1-x}\frac{\log(1-x)}{\sqrt{x(d-x)}}dx = \frac{\pi}{\sqrt{1-d}} \log\bigg( \frac{4(1-d)}{d} \frac{1-\sqrt{1-d}}{1+\sqrt{1-d}} \bigg), \label{lol12} \\
& \int_{0}^{d} V(x) \bigg( \frac{1}{\pi}+\psi(x) \bigg) \frac{dx}{\sqrt{x(d-x)}} = \beta \bigg( (\beta+4)\log\bigg( \frac{2}{1+\sqrt{1-d}} \bigg) + \frac{\beta}{2} \log \bigg( \frac{4(1-d)}{d} \frac{1-\sqrt{1-d}}{1+\sqrt{1-d}} \bigg) \bigg). \label{lol13}
\end{align}
\end{lem}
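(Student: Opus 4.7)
The plan is to establish the three identities in sequence, noting that \eqref{lol13} is an elementary linear combination of \eqref{lol11} and \eqref{lol12}. For \eqref{lol11}, I would use the substitution $x=d\sin^{2}(\theta/2)$, which satisfies $dx/\sqrt{x(d-x)}=d\theta$ and, after setting $\phi=\theta/2$, reduces the left-hand side to $2\int_{0}^{\pi/2}\log(1-d\sin^{2}\phi)\,d\phi$. This equals $2\pi\log\tfrac{1+\sqrt{1-d}}{2}$ by the classical identity $\int_{0}^{\pi/2}\log(1-k^{2}\sin^{2}\phi)\,d\phi=\pi\log\tfrac{1+\sqrt{1-k^{2}}}{2}$, which in turn can be derived by differentiating under the integral sign in $k$ and integrating the resulting rational expression in $\sqrt{1-k^{2}}$.

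For \eqref{lol12}, the plan is to use contour integration in the spirit of Lemma~\ref{lemma:some identities 1}. First substitute $y=1-x$ to recast the integral as $\int_{c}^{1}\frac{\log y}{y\sqrt{(1-y)(y-c)}}\,dy$ with $c=1-d\in(0,1)$, and then write this as
\[
\frac{1}{2i}\oint_{\mathcal{C}_{1}}\frac{\log z}{z\sqrt{(z-1)(z-c)}}\,dz,
\]
where $\mathcal{C}_{1}$ is a positively oriented loop around $[c,1]$ and $\sqrt{(z-1)(z-c)}$ is chosen so as to behave like $z$ at infinity (so that its jump across $[c,1]$ is $\pm i\sqrt{(1-y)(y-c)}$). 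Since the integrand decays like $\log|z|/|z|^{2}$, the contour can be deformed outward past the point at infinity; the only other singular set one must pick up is the branch cut $(-\infty,0]$ of $\log z$ together with the isolated point $z=0$.

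The main obstacle is to evaluate this deformed contribution, whose pieces diverge individually as the inner regularization radius $\epsilon\to 0$. The integrals along the two edges of $(-\infty,0]$ produce, via the $2\pi i$ jump of $\log$, the expression $-2\pi i\int_{\epsilon}^{\infty}\frac{dt}{t\sqrt{(1+t)(c+t)}}$, while the small circular arc of radius $\epsilon$ around $z=0$ contributes $-\frac{2\pi i\log\epsilon}{\sqrt{c}}$; here one uses that our branch of $\sqrt{(z-1)(z-c)}$ tends to $-\sqrt{c}$ as $z\to 0$, by continuity from the value at infinity along the negative real axis. The plan is to verify that these $\log\epsilon$ divergences cancel exactly and then to evaluate the residual finite constant via the substitution $v=\sqrt{(c+t)/(1+t)}$, which maps $t\in(0,\infty)$ to $v\in(\sqrt{c},1)$ and transforms $\frac{dt}{t\sqrt{(1+t)(c+t)}}$ into $\frac{2\,dv}{v^{2}-c}$; partial fractions produce the antiderivative $\frac{1}{\sqrt{c}}\log\frac{v-\sqrt{c}}{v+\sqrt{c}}$, and a careful expansion at $v\to\sqrt{c}^{+}$ isolates the $\log\epsilon$ term. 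After simplification via $1-c=(1-\sqrt{c})(1+\sqrt{c})$ and reverting to $c=1-d$, this recovers the right-hand side of \eqref{lol12}.

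Finally, \eqref{lol13} follows from the first two identities by direct bookkeeping: $V(x)\bigl(\tfrac{1}{\pi}+\psi(x)\bigr)$ is a linear combination of $\log(1-x)$ and $\log(1-x)/(1-x)$ with coefficients $-\beta(2+\beta/2)/\pi$ and $\beta^{2}\sqrt{1-d}/(2\pi)$, so dividing by $\sqrt{x(d-x)}$, integrating, and inserting \eqref{lol11} and \eqref{lol12} reproduces the claimed closed form without further calculation.
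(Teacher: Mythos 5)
Your argument is correct in substance, and differs from the paper's proof chiefly in how \eqref{lol11} is obtained. For \eqref{lol11} you use the elementary substitution $x = d\sin^2(\theta/2)$ together with the classical integral $\int_0^{\pi/2}\log(1-k^2\sin^2\phi)\,d\phi = \pi\log\tfrac{1+\sqrt{1-k^2}}{2}$; the paper instead evaluates the same integral by contour deformation, writing it as $\tfrac{1}{2i}\int_{\mathcal{C}}\tfrac{\log(1-x)}{\sqrt{x}\sqrt{x-d}}\,dx$ with $\mathcal{C}$ a loop around $[0,d]$, subtracting $\tfrac{\log(1-x)}{x}$ (which contributes nothing), and deforming to the cut $[1,\infty)$ of $\log(1-x)$. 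Your route is more elementary and self-contained. For \eqref{lol12}, you and the paper use essentially the same contour deformation with a small-circle regularization near the coincident pole and log branch-point, with the two $\log\epsilon$ divergences cancelling; your preliminary substitution $y=1-x$ places that singularity at the origin (cut of $\log z$ on $(-\infty,0]$), whereas the paper works directly in the original variable and deforms around $x=1$ with cut $[1,\infty)$ --- the same computation up to relabeling. Your $v$-substitution and the claimed transformation $\tfrac{dt}{t\sqrt{(1+t)(c+t)}}=\tfrac{2\,dv}{v^2-c}$ check out. For \eqref{lol13} both arguments are the same direct expansion of $V(x)\bigl(\tfrac{1}{\pi}+\psi(x)\bigr)$; your coefficients $-\beta(2+\beta/2)/\pi$ and $\beta^2\sqrt{1-d}/(2\pi)$ are correct, and inserting \eqref{lol11}--\eqref{lol12} indeed reproduces \eqref{lol13}. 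One small caution for the \eqref{lol12} sketch: with your stated branch $\sqrt{(z-1)(z-c)}\to -\sqrt{c}$ as $z\to 0$ and the standard orientation of the deformed contour, both the edge contribution and the small-circle contribution carry a $+$ sign (i.e.\ $+2\pi i\int_\epsilon^\infty$ and $+\tfrac{2\pi i\log\epsilon}{\sqrt{c}}$); your two minus signs are mutually consistent, so the $\log\epsilon$ cancellation still occurs, but as written the residual constant would come out with the opposite overall sign. This is easy to catch when comparing against the stated right-hand side, but worth correcting in a full write-up.
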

\begin{proof}
By contour deformations, we have
\begin{align*}
\int_{0}^{d}\frac{\log(1-x)}{\sqrt{x(d-x)}}dx = \frac{1}{2i} \int_{\mathcal{C}}\frac{\log(1-x)}{\sqrt{x}\sqrt{x-d}}\, dx,
\end{align*}
where the principal branch is used for each function in the integrand of the right-hand side, and $\mathcal{C}$ is a positively oriented contour surrounding $[0,d]$ once and not crossing $[1,+\infty)$. Next, we write
\begin{align*}
\frac{1}{2i} \int_{\mathcal{C}}\frac{\log(1-x)}{\sqrt{x}\sqrt{x-d}}dx = \frac{1}{2i} \int_{\mathcal{C}}\log(1-x)\bigg(\frac{1}{\sqrt{x}\sqrt{x-d}}-\frac{1}{x}\bigg) dx + \frac{1}{2i} \int_{\mathcal{C}}\frac{\log(1-x)}{x}dx =:I_{1}+I_{2}.
\end{align*}
The function $\frac{\log(1-x)}{x}$ is analytic inside $\mathcal{C}$, and thus $I_{2}=0$. Since
\begin{align*}
\frac{1}{\sqrt{x}\sqrt{x-d}}-\frac{1}{x} = O(x^{-2}), \qquad \mbox{as } x \to \infty,
\end{align*}
the integral $I_{1}$ can be evaluated by first deforming $\mathcal{C}$ into $C(0,M)\cup(-C(1,\epsilon))\cup \big( (1+\epsilon,M)+i0_{+} \big) \cup \big( (M,1+\epsilon)-i0_{+} \big)$ for some $M>1+\epsilon>1$, and then taking the limits $M\to + \infty$ and $\epsilon\to 0$. The contribution along $C(0,M)\cup(-C(1,\epsilon))$ vanish in these limits, and we find
\begin{align*}
\frac{1}{2i} \int_{\mathcal{C}}\frac{\log(1-x)}{\sqrt{x}\sqrt{x-d}}dx = \frac{1}{2i} \int_{1}^{+\infty} (-2\pi i) \bigg( \frac{1}{\sqrt{x}\sqrt{x-d}}-\frac{1}{x} \bigg) dx = -2\pi \log \bigg( \frac{2}{1+\sqrt{1-d}} \bigg),
\end{align*}
which is \eqref{lol11}. For \eqref{lol12}, we use the same contour deformation and obtain (after taking $M\to + \infty$)
\begin{align}\label{lol16}
\int_{0}^{d}\frac{1}{1-x}\frac{\log(1-x)}{\sqrt{x(d-x)}}dx = \frac{1}{2i} \int_{1+\epsilon}^{+\infty}  \frac{-2\pi i}{1-x}\frac{dx}{\sqrt{x}\sqrt{x-d}} - \frac{1}{2i} \int_{C(1,\epsilon)} \frac{\log(1-x)}{1-x}\frac{dx}{\sqrt{x}\sqrt{x-d}} =: I_{3}+I_{4}.
\end{align}
The first term $I_{3}$ can be evaluated exactly using primitives, from which we readily obtain
\begin{align}\label{lol14}
I_{3} = -\frac{\pi \log \epsilon}{\sqrt{1-d}} + \frac{\pi}{\sqrt{1-d}} \log\bigg( \frac{4(1-d)}{d} \frac{1-\sqrt{1-d}}{1+\sqrt{1-d}} \bigg) + O(\epsilon), \qquad \mbox{as } \epsilon \to 0,
\end{align}
while for $I_{4}$ we have
\begin{align}\label{lol15}
I_{4} = \frac{\pi \log \epsilon}{\sqrt{1-d}} + O(\epsilon), \qquad \mbox{as } \epsilon \to 0.
\end{align}
Taking $\epsilon \to 0$ in \eqref{lol16} using \eqref{lol14} and \eqref{lol15} yields \eqref{lol12}. Formula \eqref{lol13} directly follows by substituting $\psi$ and using \eqref{lol11} and \eqref{lol12}.
\end{proof}

Next, we show Theorem~\ref{Thm_JUE LDP weak} (b). 

\begin{proof}[Proof of Theorem~\ref{Thm_JUE LDP weak} (b)]
By \eqref{juepdf} with $\lambda_1=\alpha  $ and $\lambda_2= \beta n$, we have 
\begin{align}\label{lol1 N}
\mathbb{P} \Big( x_{ \rm max }^{ ( n )  }(\alpha, \beta n) < d \Big) = \frac{1}{Z_n(\alpha, \beta n) }   \int_{0}^{d} \ldots \int_{0}^{d} \prod_{1\leq i < j \leq n} (x_{j}-x_{i})^{2} \prod_{j=1}^{n}e^{-nV(x_{j})} \, dx_{j},
\end{align}
where 
\begin{align}\label{def of V alpha0}
V(x) := \begin{cases}
- \beta \log(1-x), & \mbox{if } x\in (0,d), 
\smallskip 
\\
+\infty, & \mbox{if } x\in [d,1).
\end{cases}
\end{align}
By Proposition~\ref{Prop_constrained Wacther} (b1), the associated equilibrium measure is given by \eqref{def of mu d small N}. Furthermore, $V$ satisfies the assumptions of Theorem \ref{thm:general a and b Jacobi-type}. It follows that 
\begin{align}\label{lol2 N}
\frac{1}{n!} \int_{0}^{d} \ldots \int_{0}^{d} \prod_{1\leq i < j \leq n} (x_{j}-x_{i})^{2} \prod_{j=1}^{n}e^{-nV(x_{j})} \, dx_{j} = \exp\bigg(\widehat{C}_{1} n^{2} + \widehat{C}_{2} n + \widehat{C}_{3} \log n + \widehat{C}_{4} +  O \Big( \frac{\log n}{n} \Big)\bigg)
\end{align}
as $n \to \infty$, where
\begin{align*}
& \widehat{C}_{1} = \log \frac{d}{4} - \frac{1}{2} \int_{0}^{d} V(x)\Big( \frac{1}{\pi} + \psi(x) \Big)\frac{dx}{\sqrt{x(d-x)}}, 
\\
& \widehat{C}_{2} = \alpha \log \frac{d}{2} + \log(2\pi) - \alpha \log 2 - \frac{\alpha}{2\pi}\int_{0}^{d} \frac{V(x)}{\sqrt{x(d-x)}}\, dx + \frac{\alpha}{2}V(0), \nonumber \\
& \widehat{C}_{3} = - \frac{1}{4}+\frac{\alpha^{2}}{2}, \\
& \widehat{C}_{4} = 3\zeta^{\prime}(-1) + \frac{\log 2}{12}  - \frac{1}{8}\log \big( \pi\psi(d) \big) - \frac{1-4\alpha^{2}}{8}\log \big( \pi\psi(0) \big) + \frac{\alpha}{2}\log(2\pi) - \frac{\alpha^{2}}{2}\log 2 -\log G(1+\alpha).
\end{align*}
It also follows from \eqref{def of JUE partition function} and \eqref{Barnes G asymp} that 
\begin{align}
Z_n( \alpha,\beta n )  = \exp\bigg(\widetilde{C}_{1} n^{2} + \widetilde{C}_{2} n + \widetilde{C}_{3} \log n + \widetilde{C}_{4} + O \Big( \frac{1}{n } \Big)\bigg), \qquad \mbox{as } n \to \infty,\label{lol3 N}
\end{align}
where 
\begin{align*}
& \widetilde{C}_{1} = (1+\beta)^{2} \log(1+\beta) - \frac{\beta^{2}}{2}\log \beta - \frac{(2+\beta)^{2}}{2}\log(2+\beta), \\
& \widetilde{C}_{2} = \log(2\pi) + \alpha (1+\beta)\log(1+\beta)-\alpha(2+\beta)\log(2+\beta), \nonumber \\
& \widetilde{C}_{3} = \frac{3\alpha^{2}-1}{6}, \\
& \widetilde{C}_{4} = 2\zeta^{\prime}(-1) - \alpha^{2} \mathrm{arccoth}(3+2\beta)+\frac{\alpha}{2}\log(2\pi)+\frac{1}{12}\log \bigg( \frac{\beta(2+\beta)}{(1+\beta)^{2}} \bigg)-\log G(1+\alpha).
\end{align*}
Substituting \eqref{lol2 N} and \eqref{lol3 N} in \eqref{lol1 N}, and simplifying using Lemma \ref{lemma:integrals with two hard edges}, we find \eqref{lol4 N}.
\end{proof}

\subsection{Proof of Theorem~\ref{th:lpp-ut} on the upper tail LPP probability}

Let $\rho_{n}(x)$ denote the one-point eigenvalue density of \eqref{juepdf}, defined such that for any continuous function $f$,
\begin{equation}
\mathbb{E}\bigg[\sum_{j=1}^{n}f(x_{j})\bigg] = \int_{0}^{1}f(x)\rho_{n}(x) \, dx.
\end{equation}
The eigenvalues $x_{1},\ldots,x_{n}$ in \eqref{juepdf} form a determinantal point process, meaning that all correlation functions can be represented as determinants involving a scalar function of two variables $K_{n}(x,y)$ known as the correlation kernel (see e.g. \cite[Section 3.3.1]{Co05} for some background). The eigenvalue density corresponds to the first correlation function and can be represented as $\rho_{n}(x) = K_{n}(x,x)$.

In \cite{Fo12}, the large $n$ asymptotics of $\rho_{n}$ outside the support is computed for the JUE up to terms of order $1$. Moreover, \cite[Eq. (14.136)]{Fo10} gives the relation between the asymptotics of $\rho_{n}$ outside its support and the upper tail probability of $x^{(n)}_{\mathrm{max}}(\lambda_1,\lambda_2)$. For completeness, we give a quick proof of this fact.
\begin{lem}
\label{le:juedens}
Consider the JUE \eqref{juepdf}, and recall from \eqref{def of Wachter distribution} that the support of the equilibrium measure is $[a,b]\subset [0,1]$. Suppose $b<1$ and that $d \in (b,1)$ is fixed. Assume also that
\begin{equation}
\int_{d}^{1}  \rho_{n}(x)\,dx \sim e^{-nA_{1}+A_{2}}, \qquad n \to \infty \label{evdasympt}
\end{equation}
for some constants $A_{1}>0$ and $A_{2}$. Then
\begin{equation*}
\mathbb{P}(x^{(n)}_{\mathrm{max}}(\lambda_1,\lambda_2) < d) = 1-e^{-nA_{1}+A_{2}} + O(e^{-2nA_{1}}), \qquad n \to \infty.
\end{equation*} 
\end{lem}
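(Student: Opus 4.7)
The plan is to exploit the determinantal structure of the JUE to convert the assumed one-point asymptotics \eqref{evdasympt} into large deviation asymptotics for the largest eigenvalue. The key observation is that $\mathbb{P}(x^{(n)}_{\mathrm{max}} < d) = \mathbb{P}(N=0)$, where $N := \#\{j : x_j > d\}$ is the (random) number of eigenvalues in $(d,1)$. The hypothesis \eqref{evdasympt} is exactly the statement $\mathbb{E}[N] = e^{-n A_1 + A_2}(1+o(1))$, so the goal is to show that the probability of having two or more eigenvalues above $d$ is negligible compared to the probability of having exactly one.

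First I would set up the first-moment comparison. Writing $\mathbb{P}(x_{\mathrm{max}}^{(n)} > d) = \mathbb{P}(N\ge 1)$, I decompose
\begin{equation*}
\mathbb{E}[N] = \sum_{k\ge 1} k\,\mathbb{P}(N=k) = \mathbb{P}(N\ge 1) + \sum_{k\ge 2}(k-1)\,\mathbb{P}(N=k),
\end{equation*}
so that
\begin{equation*}
\mathbb{P}(x_{\mathrm{max}}^{(n)}>d) = \mathbb{E}[N] - \sum_{k\ge 2}(k-1)\,\mathbb{P}(N=k) = \mathbb{E}[N] + O\big(\mathbb{E}[N(N-1)]\big).
\end{equation*}
Hence the lemma reduces to bounding the factorial moment $\mathbb{E}[N(N-1)]$ by $E_n^2$, where $E_n := \int_d^1 \rho_n(x)\,dx$.

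The second step is to exploit the determinantal property. The two-point correlation function of the JUE is given by
\begin{equation*}
\rho_n^{(2)}(x,y) = K_n(x,x)K_n(y,y) - K_n(x,y)^2 \le K_n(x,x)K_n(y,y) = \rho_n(x)\rho_n(y),
\end{equation*}
from which
\begin{equation*}
\mathbb{E}[N(N-1)] = \int_d^1\!\!\int_d^1 \rho_n^{(2)}(x,y)\,dx\,dy \le \bigg(\int_d^1 \rho_n(x)\,dx\bigg)^2 = E_n^2.
\end{equation*}
Combining this with the previous step gives
\begin{equation*}
\mathbb{P}(x_{\mathrm{max}}^{(n)}>d) = E_n + O(E_n^2) = e^{-nA_1+A_2} + O(e^{-2nA_1}),
\end{equation*}
and taking the complement yields the claim. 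The only delicate point is verifying that the kernel $K_n$ is indeed positive on the diagonal (automatic for the Christoffel--Darboux kernel of the JUE) and that $\rho_n^{(2)}$ admits the standard determinantal representation on $(d,1)$, both of which are standard facts; no refined analysis of $K_n$ itself is needed beyond the assumed one-point asymptotics, since the argument only uses the non-negativity of $K_n(x,y)^2$.
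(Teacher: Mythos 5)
Your argument is correct, and it is essentially the same strategy as the paper's: isolate the first-intensity term $\int_d^1 \rho_n$ and show the remaining contributions are $O(E_n^2)$ by exploiting the determinantal structure. The difference is in how the quadratic bound is obtained. The paper starts from the Fredholm expansion $\mathbb{P}(x_{\max}<d) = 1 + \sum_{l\geq 1}\frac{(-1)^l}{l!}\int_{[d,1]^l}\det\{K_n(x_i,x_j)\}$, separates the $l=1$ term, and then applies Hadamard's inequality to every $l\geq 2$ block to bound the tail of the series by $\sum_{l\geq 2}\frac{1}{l!}E_n^l \leq E_n^2$. You instead run a second-factorial-moment argument on the count $N$, writing $\mathbb{P}(N\geq 1) = \mathbb{E}[N] - \sum_{k\geq 2}(k-1)\mathbb{P}(N=k)$ and bounding the error by $\mathbb{E}[N(N-1)] = \iint \rho_n^{(2)} \leq E_n^2$ via the $2\times 2$ determinant inequality $K_n(x,x)K_n(y,y)-K_n(x,y)^2 \leq \rho_n(x)\rho_n(y)$. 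Your version is the $l=2$ special case of Hadamard's inequality combined with a Bonferroni-type truncation, so it is marginally more elementary: you never need the full Fredholm series, only the two-point correlation. Both routes yield the same $\mathbb{P}(x_{\max}<d)=1-E_n+O(E_n^2)$, and both (like the paper's own statement) rely on the tacit understanding that the $\sim$ in the hypothesis \eqref{evdasympt} is strong enough that replacing $E_n$ by $e^{-nA_1+A_2}$ in the leading term is harmless — a cosmetic point, not a gap in your reasoning.
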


\begin{proof}
The distribution of $x^{(n)}_{\mathrm{max}}(\lambda_1,\lambda_2)$ can be written as
\begin{align}
	\begin{split}
\mathbb{P}(x^{(n)}_{\mathrm{max}}(\lambda_1,\lambda_2) < d) & = 1+\sum_{l=1}^{n}\frac{(-1)^{l}}{l!}\int_{[d,1]^{l}}  \mathrm{det}\bigg\{K_{n}(x_i,x_j)\bigg\}_{i,j=1}^{l} \, dx_{1}\ldots dx_{l}   \\
 & = 1-\int_{d}^{1}\rho_{n}(x)\, dx+\sum_{l=2}^{n}\frac{(-1)^{l}}{l!}\int_{[d,1]^{l}}  \mathrm{det}\bigg\{K_{n}(x_i,x_j)\bigg\}_{i,j=1}^{l} \, dx_{1}\ldots dx_{l}. \label{lol20}
 \end{split}
\end{align}
Let $E_{n}(d)$ denote the last sum (with $l$ running from $2$ to $n$). Since the matrices $\{K_{n}(x_i,x_j)\}_{i,j=1}^{\ell}$ are Hermitian positive definite, Hadamard’s inequality (see e.g. \cite[Proposition 9.1.4]{Fo10}) implies that
\begin{align*}
\mathrm{det}\bigg\{K_{n}(x_i,x_j)\bigg\}_{i,j=1}^{l} \leq \prod_{i=1}^{l} K_{n}(x_{i},x_{i}), \qquad \mbox{for any } l \in \N, \, x_{1},\ldots,x_{l}\in [d,1].
\end{align*}
Hence,
\begin{equation*}
|E_{n}(d)| \leq \sum_{l=2}^{\infty}\frac{1}{l!}\bigg(\int_{d}^{1} \rho_{n}(x)\,dx \bigg)^{l} \leq \bigg(\int_{d}^{1} \rho_{n}(x)\,dx \bigg)^{2}
\end{equation*}
for all sufficiently large $n$, where we used $e^{x}-1-x \leq x^{2}$ for $x \in [0,1]$. Inserting \eqref{evdasympt} completes the proof. 
\end{proof}

Now we recall the result \cite[Eq. (1.5)]{Fo12} here in the specific case of the JUE$_n(\alpha n,\beta n)$: 
\begin{equation}\label{lol21}
\rho_{n}(x) \sim \mathrm{exp}(-2n\widetilde{\Phi}_{\alpha,\beta}(x))\,C_{\alpha,\beta}(x)
\end{equation}
where $\widetilde{\Phi}_{\alpha,\beta}(x)$ and $C_{\alpha,\beta}(x)$ are $C^{1}$ functions defined on $x\in(b,1]$ where $b$ is the upper end point of the support. Furthermore, $\widetilde{\Phi}_{\alpha,\beta}(x)$ is the monotonically increasing function of $x$ given by
\begin{equation}
\begin{split}
\widetilde{\Phi}_{\alpha,\beta}(x)& = -\log\bigg(\frac{2x-a-b+2\sqrt{(x-b)(x-a)}}{b-a}\bigg)+\alpha\log\bigg(\frac{\sqrt{ab}+x-\sqrt{(x-b)(x-a)}}{(\sqrt{a}+\sqrt{b})\sqrt{x}}\bigg)
\\
&\quad +\beta\log\bigg(\frac{\sqrt{(1-a)(1-b)}-x+1+\sqrt{(x-b)(x-a)}}{(\sqrt{1-a}+\sqrt{1-b})\sqrt{1-x}}\bigg)
\end{split}
\end{equation}
and $C_{\alpha,\beta}(x)$ is given by
\begin{equation}
C_{\alpha,\beta}(x) = \frac{b-a}{8\pi}\,\frac{1}{(x-a)(x-b)}.
\end{equation}
Now applying Lemma \ref{le:juedens} we obtain
\begin{equation*}
\begin{split}
\mathbb{P}(x_{\mathrm{max}}^{(n)} (\alpha n,\beta n) > x) &\sim \int_{x}^{1}\rho_{n}(y)\, dy
\sim\int_{x}^{1}e^{-2n\widetilde{\Phi}_{\alpha,\beta}(y)}C_{\alpha,\beta}(y)\,dy, \qquad \mbox{as } n \to \infty.
\end{split}
\end{equation*}
Since $\widetilde{\Phi}_{\alpha,\beta}(y)$ is a positive and monotonically increasing function of $y > b$, the dominant contribution comes from a small neighbourhood of $x$, the lower end point of integration. By standard saddle point arguments we obtain
\begin{equation*}
\begin{split}
\mathbb{P}(x_{\mathrm{max}}^{(n)} (\alpha n,\beta n) > x) &\sim \frac{e^{-2n\widetilde{\Phi}_{\alpha,\beta}(x)}C_{\alpha,\beta}(x)}{2n\widetilde{\Phi}'_{\alpha,\beta}(x)}, \qquad \mbox{as } n \to \infty.
\end{split}
\end{equation*}

Then, replacing $\alpha$ by $\alpha_{1} + \frac{\alpha_{2}}{n}$ in the above and applying a Taylor series expansion (using that \eqref{lol21} is uniform for $\alpha$ in compact subsets of $[0,+\infty)$), we obtain Theorem~\ref{Thm_JUE LDP} (a) by setting $\alpha_{1}=\alpha$ and $\alpha_{2}=t$, and Theorem \ref{Thm_JUE LDP weak} (a) by setting $\alpha_{1}=0$ and $\alpha_{2}=\mathrm{n}$. Theorem \ref{th:lpp-ut} now directly follows by combining Theorems \ref{Thm_JUE LDP weak}, \ref{Thm_JUE LDP} and \ref{th:jue-lpp} (note that $\widetilde{\Phi}_{\alpha,\beta}$ is related to \eqref{utexpan} as $\widetilde{\Phi}_{\alpha,\beta} = \Phi_{\alpha+1,\beta}$).

\appendix

\section{Results from \cite{CG21}} \label{Appendix_general results on Hankel}

\begin{thm}[particular case of Theorem 1.2 in \cite{CG21}]\label{theorem L} 
	Let $\mathcal{I}=[-1,+\infty)$ and $V:\mathcal{I}\to \R$ be a one-cut regular potential whose equilibrium measure is supported on $[-1,1]$ with density $\psi(x)\sqrt{\frac{1-x}{1+x}}$. Let $W:\mathcal{I}\to \R$ be analytic in a neighbourhood of $[-1,1]$, locally H\"{o}lder continuous on $\mathcal{I}$ and such that $W(x) = O(V(x))$ as $x\to + \infty$. As $n \to \infty$, we have
	\begin{equation*}
		L_{n}(V,W) = \exp\bigg(C_{1} n^{2} + C_{2} n + C_{3} \log n + C_{4} +  O \Big( \frac{\log n}{n} \Big)\bigg),
	\end{equation*}
	where
	\begin{align*}
		& C_{1} = - \log 2 - \frac{3}{2} - \frac{1}{2} \int_{-1}^{1} (V(x)-2(x+1))\Big( \frac{1}{\pi} + \psi(x) \Big)\sqrt{\frac{1-x}{1+x}}\, dx, \\
		& C_{2} = \log(2\pi) + \int_{-1}^{1}W(x)\psi(x)\sqrt{\frac{1-x}{1+x}} \, dx, \nonumber \\
		& C_{3} = - \frac{1}{6}, \\
		& C_{4} = 2\zeta^{\prime}(-1) - \frac{1}{8}\log ( \pi\psi(-1) ) - \frac{1}{24}\log ( \pi\psi(1) ) + \frac{1}{4\pi^{2}}\int_{-1}^{1}  \frac{W(x)}{\sqrt{1-x^{2}}} \bigg(\dashint_{-1}^{1} \frac{W^{\prime}(y)\sqrt{1-y^{2}}}{x-y}\, dy \bigg) \,dx.
	\end{align*}
\end{thm}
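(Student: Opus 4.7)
The plan is to deduce Theorem~\ref{theorem L} from \cite[Theorem 1.2]{CG21} by specialization rather than re-proving the full Riemann--Hilbert asymptotic expansion. The setting here (one-cut regular $V$ on $[-1,+\infty)$ with equilibrium support $[-1,1]$, with hard-edge behavior $\sim 1/\sqrt{1+x}$ at $-1$ and soft-edge behavior $\sim \sqrt{1-x}$ at $1$) is precisely a one hard edge/one soft edge configuration. The first step is therefore a verification step: check that the hypotheses of \cite[Theorem 1.2]{CG21} hold for our $V$ and $W$. Namely, one-cut regularity (by assumption), the edge behavior of the density as encoded by $\psi(x)\sqrt{\frac{1-x}{1+x}}$ with $\psi$ analytic and strictly positive on $[-1,1]$, the analyticity of $W$ in a neighbourhood of $[-1,1]$, local H\"older continuity on $\mathcal{I}$, and the growth bound $W(x) = O(V(x))$ ensuring $e^{-nV+W}$ is integrable on $[-1,+\infty)$.

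The second step is to match coefficients. The coefficient $C_2$ is the standard ``mean of $W$'' term against the equilibrium measure, which reads off directly from \cite{CG21}. The coefficient $C_1$ takes the form stated because \cite{CG21} expresses the leading term by comparison with a reference potential that vanishes at the hard edge: here the reference is $2(x+1)$, and $-\log 2 - 3/2$ absorbs the asymptotics of the exactly-solvable reference Hankel determinant (a shifted Laguerre-type integral) on $[-1,+\infty)$. The coefficient $C_3 = -\frac{1}{6}$ is the universal log-correction in the mixed soft/hard configuration, arising from the combined contributions of the Bessel parametrix at $-1$ and the Airy parametrix at $+1$. Finally, $C_4$ decomposes into: the universal hard-edge constant $-\frac{1}{8}\log(\pi\psi(-1))$ (Bessel-type with parameter $\alpha=0$), the universal soft-edge constant $-\frac{1}{24}\log(\pi\psi(1))$ (Airy-type), the $2\zeta'(-1)$ coming from the Barnes $G$-function asymptotics in the reference computation, and the bilinear $W$-functional $\frac{1}{4\pi^{2}}\int W\,\mathcal{H}[W'\sqrt{1-y^{2}}]\,dx/\sqrt{1-x^{2}}$ which is the standard formula for the $W$-dependent part of the order-$1$ constant in one-cut asymptotics.

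The step that would require the most care is precisely this final matching for $C_4$: different papers normalize the Barnes $G$-function contributions and the edge constants differently, so one needs to convert carefully between the form given in \cite[Theorem 1.2]{CG21} and the explicit form stated in Theorem~\ref{theorem L}. In particular, one must verify that the $W$-independent part of $C_4$ is correctly assembled from the hard-edge local parametrix analysis at $-1$ (producing the $-\frac{1}{8}\log(\pi\psi(-1))$ term via the Bessel kernel at the edge) and the soft-edge local parametrix analysis at $+1$ (producing $-\frac{1}{24}\log(\pi\psi(1))$ via the Airy kernel), as opposed to being hidden inside a global integral in \cite{CG21}'s formulation. Once this bookkeeping is done, no new Riemann--Hilbert analysis is needed, and the application of \cite[Remark 1.4]{Ch19} (affine change of variables on the interval) will later produce the general $[a,b]$ statements of Theorems~\ref{thm:general a and b} and~\ref{thm:general a and b Jacobi-type}.
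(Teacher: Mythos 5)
Your proposal is correct and takes essentially the same approach as the paper: the paper gives no separate proof of Theorem~\ref{theorem L}, simply labelling it a particular case of \cite[Theorem~1.2]{CG21}, and your plan of verifying the hypotheses and then matching coefficients (reference Laguerre computation producing $-\log 2 - \tfrac32$, $\log(2\pi)$, $-\tfrac16$, $2\zeta'(-1)$; Bessel hard-edge constant $-\tfrac18\log(\pi\psi(-1))$; Airy soft-edge constant $-\tfrac1{24}\log(\pi\psi(1))$; bilinear Hilbert-transform functional in $W$) is precisely what that specialization entails. You also correctly identify that no new Riemann--Hilbert analysis is required here and that the reflection $x\mapsto -x$ and the affine substitution of \cite[Remark~1.4]{Ch19} then yield Theorems~\ref{thm:Laguerre type v2} and \ref{thm:general a and b}, as the appendix does.
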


Using Theorem \ref{theorem L} and the change of variables $x_{j}\to -x_{j}$ in \eqref{def of Ln}, we get the following.
\begin{thm}\label{thm:Laguerre type v2}
	Let $\mathcal{I}=(-\infty,1]$ and $V:\mathcal{I}\to \R$ be a one-cut regular potential whose equilibrium measure is supported on $[-1,1]$ with density $\psi(x)\sqrt{\frac{1+x}{1-x}}$. Let $W:\mathcal{I}\to \R$ be analytic in a neighbourhood of $[-1,1]$, locally H\"{o}lder continuous on $\mathcal{I}$ and such that $W(x) =  O(V(x))$ as $x\to - \infty$. As $n \to \infty$, we have
	\begin{equation*}
		L_{n}(V) = \exp\bigg(C_{1} n^{2} + C_{2} n + C_{3} \log n + C_{4} +  O \Big( \frac{\log n}{n} \Big)\bigg),
	\end{equation*}
	where
	\begin{align*}
		& C_{1} = - \log 2 - \frac{3}{2} - \frac{1}{2} \int_{-1}^{1} (V(x)-2(1-x))\Big( \frac{1}{\pi} + \psi(x) \Big)\sqrt{\frac{1+x}{1-x}} \, dx, \\
		& C_{2} = \log(2\pi) + \int_{-1}^{1}W(x)\psi(x)\sqrt{\frac{1+x}{1-x}} \, dx, \nonumber \\
		& C_{3} = - \frac{1}{6}, \\
		& C_{4} = 2\zeta^{\prime}(-1) - \frac{1}{8}\log ( \pi\psi(1) ) - \frac{1}{24}\log ( \pi\psi(-1) ) + \frac{1}{4\pi^{2}}\int_{-1}^{1}  \frac{W(x)}{\sqrt{1-x^{2}}} \bigg(\dashint_{-1}^{1} \frac{W^{\prime}(y)\sqrt{1-y^{2}}}{x-y} \, dy \bigg) \, dx.
	\end{align*}
\end{thm}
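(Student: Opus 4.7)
The plan is to reduce Theorem \ref{thm:Laguerre type v2} to Theorem \ref{theorem L} by the change of variables $x_j \mapsto -x_j$ for $j=1,\ldots,n$ in the multiple integral defining $L_n(V,W)$ in \eqref{def of Ln}. Since the Vandermonde squared $\prod_{i<j}(x_j-x_i)^2$ is invariant under simultaneous negation of all variables, and the interval $(-\infty,1]$ maps bijectively onto $[-1,+\infty)$ with the orientation absorbed by reversing the limits, I obtain
\begin{equation*}
L_n(V,W) = L_n(\widetilde V,\widetilde W), \qquad \widetilde V(y) := V(-y), \quad \widetilde W(y) := W(-y),
\end{equation*}
where on the right-hand side $\widetilde V, \widetilde W$ are viewed as functions on $[-1,+\infty)$.

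Next I would verify that $\widetilde V$ satisfies the hypotheses of Theorem \ref{theorem L}, in particular that its equilibrium measure is supported on $[-1,1]$ with density of the form $\widetilde\psi(y)\sqrt{\frac{1-y}{1+y}}$ and $\widetilde\psi(y)=\psi(-y)$. This follows directly from the uniqueness of the equilibrium measure and the invariance of the logarithmic energy under reflection: if $\mu$ minimizes the $V$-functional, then the pushforward $\mu\circ(-\mathrm{id})$ minimizes the $\widetilde V$-functional. Writing $d\mu(x)=\psi(x)\sqrt{\frac{1+x}{1-x}}\mathbbm{1}_{[-1,1]}(x)\,dx$, the substitution $x=-y$ turns $\sqrt{\frac{1+x}{1-x}}$ into $\sqrt{\frac{1-y}{1+y}}$, confirming the desired form with $\widetilde\psi=\psi(-\,\cdot\,)$. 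The remaining hypotheses of Theorem \ref{theorem L} (analyticity of $\widetilde W$ near $[-1,1]$, Hölder continuity, and growth at $+\infty$) are immediate from the corresponding properties assumed for $W$ at $-\infty$.

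Applying Theorem \ref{theorem L} to the pair $(\widetilde V,\widetilde W)$ and then substituting $x=-y$ in each of the resulting integrals for $C_1,C_2,C_4$ yields the stated formulas. Concretely, $2(y+1)\mapsto 2(1-x)$ and $\sqrt{\tfrac{1-y}{1+y}}\mapsto\sqrt{\tfrac{1+x}{1-x}}$ in the $C_1$-integrand, which transforms the $C_1$ coefficient of Theorem \ref{theorem L} into the $C_1$ coefficient claimed here. The $C_2$-integral transforms identically. For the two boundary terms in $C_4$, the endpoints get swapped: $\widetilde\psi(-1)=\psi(1)$ and $\widetilde\psi(1)=\psi(-1)$, explaining why the $-\frac18$ and $-\frac{1}{24}$ coefficients attach to $\log(\pi\psi(1))$ and $\log(\pi\psi(-1))$ respectively. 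Finally, for the double integral in $C_4$, note that $\widetilde W'(y)=-W'(-y)$, so the change of variables $y=-x$, $z=-w$ introduces two sign flips that cancel, and $\sqrt{1-y^2}$, $\sqrt{1-z^2}$ are invariant; the principal-value integral survives unchanged.

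I expect no real obstacle here — the argument is a bookkeeping exercise in tracking sign changes and verifying that each hypothesis of Theorem \ref{theorem L} transfers correctly under reflection. The only minor subtlety is confirming that the local Hölder continuity of $\widetilde W$ on $[-1,+\infty)$ and the growth bound $\widetilde W(y)=O(\widetilde V(y))$ as $y\to+\infty$ follow automatically from the analogous assumptions on $W$ at $x\to-\infty$, which is immediate by the definition $\widetilde W(y)=W(-y)$.
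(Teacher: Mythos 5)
Your proof is correct and takes essentially the same approach as the paper, which reduces Theorem~\ref{thm:Laguerre type v2} to Theorem~\ref{theorem L} via the change of variables $x_j\to -x_j$; the paper states this reduction in a single sentence, while you spell out the bookkeeping (reflection of the equilibrium measure, endpoint swap in the $C_4$ boundary terms, sign cancellations in the double integral), all of which is accurate.
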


Following \cite[Remark 1.4]{Ch19}, we extend the above result to handle equilibrium measures supported on $[a,b]$ with a soft edge at $a$ and a hard edge at $b$.

Suppose $w(x)=e^{-nV(x)}e^{W(x)}$ is such that the associated equilibrium measure is supported on $[a,b]$. Then, the change of variables $y_{j} =( x_{j} - \frac{a+b}{2} )/( \frac{b-a}{2} )$, $j = 1,...,n$ in \eqref{def of Ln} shows that
\begin{equation}\label{L to Ltilde}
	L_{n}(V,W)|_{\mathcal{I}=(-\infty,b]} = \Big( \frac{b-a}{2} \Big)^{n^{2}} L_{n}(\widetilde{V},\widetilde{W})|_{\mathcal{I}=(-\infty,1]},
\end{equation}
where 
\begin{equation*}
	\widetilde{V}(x) = V( \tfrac{a+b}{2}+ \tfrac{b-a}{2} \, x ), \qquad \widetilde{W}(x) = W( \tfrac{a+b}{2}+  \tfrac{b-a}{2}\,x ).
\end{equation*}

Also, if $\ell$ and $\psi(x) \sqrt{\frac{x-a}{b-x}}$ are respectively the Euler-Lagrange constant and the density of the equilibrium measure of $V$, the change of variables $s =  ( y - \frac{a+b}{2} )/ ( \frac{b-a}{2})$ in \eqref{EL equality 2} and \eqref{EL inequality 2} implies that the equilibrium measure of $\widetilde{V}$ is supported on $[-1,1]$ and can be written as $\widetilde{\psi}(x)\sqrt{\frac{1+x}{1-x}}$. Furthermore, $\widetilde{\psi}$ and the Euler-Lagrange constant $\widetilde{\ell}$ of $\widetilde{V}$ are given by
\begin{equation} \label{A2}
	\widetilde{\ell} = \ell + 2\log(\tfrac{b-a}{2}), \qquad \widetilde{\psi}(x) =  \frac{b-a}{2} \psi( \tfrac{a+b}{2} +  \tfrac{b-a}{2} \,x ).
\end{equation}
The assumptions on $W$ in Theorem \ref{thm:Laguerre type v2} can in fact be relaxed; it suffices to assume that $W$ satisfies Assumption \ref{ass:W}. This can be shown straightforwardly following the argument in \cite[Section 1.3]{CFWW25}.
Substituting the above relations in Theorem \ref{thm:Laguerre type v2} and simplifying, we obtain Theorem~\ref{thm:general a and b}.

Similarly, if $\ell$ and $ \frac{\psi(x)}{\sqrt{(b-x)(x-a)}}$ are respectively the Euler-Lagrange constant and the density of the equilibrium measure of $V$, the change of variables $s =  ( y - \frac{a+b}{2} )/ ( \frac{b-a}{2} )$ in \eqref{EL equality 2} and \eqref{EL inequality 2} implies that the equilibrium measure of $\widetilde{V}$ is supported on $[-1,1]$ and can be written as $\frac{\widetilde{\psi}(x)}{\sqrt{1-x^{2}}}$. Furthermore, $\widetilde{\psi}$ and the Euler-Lagrange constant $\widetilde{\ell}$ of $\widetilde{V}$ are given by 
\begin{equation}
	\widetilde{\ell} = \ell + 2\log(\tfrac{b-a}{2}), \qquad \widetilde{\psi}(x) =  \psi( \tfrac{a+b}{2} +  \tfrac{b-a}{2} x).
\end{equation}
Using the above relations in \cite[Theorem 1.3]{CG21} and simplifying, we obtain Theorem~\ref{thm:general a and b Jacobi-type}.

\bigskip

\subsection*{Acknowledgement}
SSB was supported by the National Research Foundation of Korea grant (RS-2023-00301976, RS-2025-00516909). CC is a Research Associate of the Fonds de la Recherche Scientifique - FNRS. CC also acknowledges support from the Swedish Research Council, Grant No. 2021-04626, and from the European Research Council (ERC), Grant Agreement No. 101115687.
PM was supported by the Swedish Foundation for International Cooperation in Research and Higher Education (PD2023-9315).
NS was supported from the Royal Society, grant URF\textbackslash R\textbackslash 231028.
The authors are grateful to Elnur Emrah, Peter Forrester, Kurt Johansson, Pierre Le Doussal, Yuchen Liao, and Matteo Mucciconi for their interest and valuable discussions.

\bigskip

\end{document}